\newcommand{\RR}{\mathbb{R}}
\newcommand{\EE}{\mathbb{E}}
\newcommand{\F}{\mathcal{F}}
\newcommand{\N}{\mathcal{N}}
\DeclareMathOperator{\Var}{Var}
\DeclareMathOperator{\Cov}{Cov}
\DeclareMathOperator{\Corr}{Corr}
\newtheorem{assumption}{Assumption}
\newtheorem{algorithm}{Algorithm} 
\newtheorem{theorem}{Theorem}[section]
\newtheorem{corollary}[theorem]{Corollary}
\newtheorem{lemma}[theorem]{Lemma}
\newtheorem{proposition}[theorem]{Proposition}
\theoremstyle{remark}
\newenvironment{remark}
{\pushQED{\qed}\remarkx}
{\popQED\endremarkx}
\newcommand{\floor}[1]{\lfloor #1 \rfloor}
\newcommand{\diag}{\operatorname{diag}}
\title{Strong Invariance Principles
for Ergodic Markov Processes\\}
\author[1]{Ardjen Pengel}
\author[1]{Joris Bierkens}
\affil[1]{\small Delft Institute of Applied Mathematics, Delft University of Technology}
\affil[ ]{\small\textit{E-mail:} \url{ a.l.pengel@tudelft.nl}, \url{ joris.bierkens@tudelft.nl}}
\begin{document}
\date{} 
\maketitle

\setlength{\parskip}{7.5pt}
\setlength\parindent{0pt}
\setcitestyle{semicolon}
\setlength\bibsep{7.5pt}

\begin{abstract} 
\noindent
Strong invariance principles describe the error term of a Brownian approximation of the partial sums of a stochastic process. While these strong approximation results have many applications, the results for continuous-time settings have been limited. In this paper, we obtain strong invariance principles for a broad class of ergodic Markov processes. Strong invariance principles provide a unified framework for analysing commonly used estimators of the asymptotic variance in settings with a dependence structure. We demonstrate how this can be used to analyse the batch means method for simulation output of Piecewise Deterministic Monte Carlo samplers. We also derive a fluctuation result for additive functionals of ergodic diffusions using our strong approximation results.
\end{abstract}
\noindent
\textbf{Keywords:} Strong invariance principle, piecewise deterministic Markov processes,  asymptotic variance estimation.

\section{Introduction}
Let $X=(X_k)_{k\in \mathbb{N}}$ be a stochastic sequence defined on a common probability space and consider the partial sum process $S_n$, given by $S_n=\sum_{k=1}^nX_k$. We say that a strong invariance principle holds for $X$ if there exist 
 a probability space
$(\Omega,\mathcal{F},\mathbb{P})$ on which we can construct a sequence of random variables $X'=(X'_k)_{k\in \mathbb{N}}$ and a Brownian motion $W=(W(t))_{ t\geq 0}$, such that $X$ and $X'$ are equal in law and 
$$ \abs{S_n'-\mu n- \sigma W(n)}=\mathscr{O}(\psi_n) \quad \textrm{a.s.,}$$
where $S_n'$ denotes the partial sum process of $X'$, $\mu$ and $\sigma$ are finite constants determined by the law of the process,  $\mathscr{O}$ describes the asymptotic regime, and $\psi_n$ the corresponding approximation error. More specifically, if $S=(S_t)_{t\geq 0}$ denotes a stochastic process and $\psi=(\psi_t)_{t\geq 0}$ is some positive sequence, we write  $$S_T=o(\psi_T)\ \textrm{a.s.}\ \quad \textrm{and}\quad S_T=O(\psi_T)\ \textrm{a.s.}$$
to denote
$$\mathbb{P}\left(\lim_{T\rightarrow \infty}{S_T}\big/{\psi_T}=0\right)=1\ \quad \textrm{and}\quad \mathbb{P}\left(\limsup_{T\rightarrow \infty} \ { \abs{S_T}} \big/
{\psi_T }< \infty \right)=1 $$
respectively. For notational convenience we will usually make no distinction between $X$ and $X'$.

For a sequence of independent, identically distributed random variables with mean zero and unit variance, the Koml\'os-Major-Tusn\'ady approximation \citep{kmt_2,kmt_1} asserts that if $E\abs{X_1}^p<\infty$ for some $p>2$, then  on a suitably enriched probability space, we can construct a Brownian motion $W=\{W(t), t \geq 0\}$ such that
\begin{equation}
\label{kmt1}
S_n=W(n)+o(n^{1/p})\quad \textrm{a.s.}    
\end{equation}
\noindent If we can additionally assume that the moment-generating function exists in an area around zero, i.e., $\mathbb{E}e^{t\abs{X}}<\infty$ for some $t>0$, then
\begin{equation}
\label{kmt_2}
S_n=W(n)+O(\log n)\quad \textrm{a.s.}
\end{equation}
\noindent
Furthermore, if only existence of the the second moment is assumed, \citet{major} showed that there exists a sequence $t_n\sim n$ such that
\begin{equation}
\label{kmt_3}
S_n=W(t_n)+o(n^{1/2})\quad \textrm{a.s.}  
\end{equation}
%
The error terms appearing in the strong invariance principles  (\ref{kmt1}), (\ref{kmt_2}), and (\ref{kmt_3}) are optimal. The approximation error appearing in the strong invariance principle also quantifies the convergence rate in the functional central limit theorem, as shown in  \citet[Theorem 1.16 and Theorem 1.17]{weighted_approx}. These strong approximation results are powerful tools used to obtain numerous results in both probability and statistics as seen in, e.g.,  \citet{applications_sip}, \citet{sip_boek}, \citet{parzen}, and \citet{shorack_empirical}.

Naturally, it is of great interest to extend these results beyond the i.i.d. setting. \citet{berkes} gives an extensive  overview of invariance principles for dependent sequences. 
In Markovian settings, strong approximation results were obtained by \citet{cuny}, \citet{sip_regenerative}, \citet{multivariate_consistency}, and \citet{merlevede2015}, among others. The strong invariance principle of \citet{merlevede2015} attains the Koml\'os-Major-Tusn\'ady bound given in (\ref{kmt_2}). The results of \citet{sip_regenerative} and \citet{merlevede2015} are established through an application of Nummelin splitting, introduced in the seminal papers of \citet{athreya} and \citet{nummelin}. Provided that the transition operator of the chain satisfies a one-step minorisation condition, a bivariate process can be constructed such that this process possesses a recurrent atom and the first coordinate of the constructed process is equal in law to the original Markov chain. Consequently, the chain  inherits a regenerative structure and can thus be divided into independent identically distributed cycles. By application of the Koml\'os-Major-Tusn\'ady approximations strong invariance principles can be obtained. Strong approximation results for Markov chains are useful tools for analysing estimators of the asymptotic variance of Markov Chain Monte Carlo (MCMC) sampling algorithms. \citet{damerdji1991,damerdji1994}, \citet{flegal_bm}, and \citet{multivariate_consistency} show strong consistency of the batch means and spectral variance estimators for MCMC simulation output using the appropriate strong approximation results.

Recently, there has been growing interest in Monte Carlo algorithms based on Piecewise Deterministic Markov Processes (PDMPs). The main appeal of these processes is their non-reversible nature. It is well known that non-reversibility can significantly improve performance of sampling methods, in terms of both convergence rate to equilibrium and asymptotic variance, see for example, the results of \citet{hwang1993} and \citet{lelievre2013} regarding convergence to stationarity and \citet{duncan2016variance}  and \citet{rey2015irreversible} regarding the asymptotic variance. Furthermore, PDMPs have piecewise deterministic paths and can therefore be simulated without discretisation error, in contrast to for example Langevin and Hamiltonian dynamics. 
The primary sampling algorithms belonging to this class are the Zig-Zag sampler and the Bouncy Particle sampler, introduced by \citet{bierkens_intro} and \citet{bouchard2018} respectively. Moreover, since these processes maintain the correct target distribution if sub-sampling is employed, they enjoy advantageous scaling properties to large datasets, as seen in \citet{zigzagsub}.

 In order for many useful results regarding estimation of the asymptotic variance of Markov chain simulation output to carry over to PDMP-based methods, it is required that a strong invariance principle holds for the underlying continuous-time process. In this paper, we obtain strong approximation results for a broad class of (continuous-time) ergodic Markov processes. Firstly, we show that the strong invariance principle given in Theorem \ref{Multi_SIP} can be obtained directly through ergodicity and moment conditions. However, the resulting error rate is not explicit and therefore less convenient to work with. 
 
A natural approach for obtaining a more refined strong invariance principle  would be through regenerative properties of the process. However, it is in general not possible to show that the transition semigroup satisfies a minorisation condition such that a regenerative structure can be obtained. The resolvent chain, on the other hand, does satisfy a one-step minorisation condition. Utilising this result, \citet{locherbach2008num} extend the concept of Nummelin splitting to Harris recurrent Markov processes. Hence we can redefine the process such that it is embedded in a richer process which is endowed with a recurrent atom.
Although the resulting cycles are not independent and we therefore do not have regeneration in the classic sense, we do obtain short range dependence. Therefore we can utilise the approximation results of \citet{SPLIT_SIP} to obtain a strong invariance principle attaining a convergence rate of order $O(T^{1/4}\log T)$. This result is formulated in Theorem \ref{main_sip} and covers a wide range of Markov processes including ergodic diffusions. Although the nearly optimal bound of \citet{SPLIT_SIP} does not carry over, to the best of our knowledge, there are currently no approaches established that lead to superior rates for the  class of processes considered in Theorem \ref{main_sip}. 

For PDMPs we are able to give a strong invariance principle with an improved approximation error. We show that the univariate Zig-zag process has regenerative cycles. This allows us to follow the approach of \citet{merlevede2015} such that the optimal strong approximation error of $O(T^{1/p})$ can be obtained. Moreover, if the target distribution factorises into a product of independent densities, the optimal approximation bound carries over to the multivariate settings. Furthermore, we also show that the results of \citet{merlevede2015} can be extended under less restrictive conditions such that the optimal approximation error (\ref{kmt_2}) is still attained. Finally, we discuss some applications of our obtained strong invariance principles. We demonstrate how the obtained strong approximation results can be utilised for analysing the batch means estimator of the asymptotic variance of continuous-time Monte Carlo samplers. Theorem \ref{t_bm} weakens the existing regularity conditions guaranteeing strong convergence of the batch means estimator in an MCMC setting. This is a direct consequence of the fact that Theorems \ref{zz_sip} and \ref{multi_zz_sip} obtain the optimal approximation rate of $O(T^{1/p})$ whereas previous work on estimation of the MCMC standard error is based on strong invariance principles with limited accuracy, which we further explain in Remark \ref{bm_remark}. Furthermore, we demonstrate the applicability of our results to diffusion processes and show that the magnitude of increments can be described with our obtained approximation results.

This article is organised as follows. In section 2, we give  a brief introduction of Piecewise Deterministic Markov processes and state our motivational example. In Section 3, we review Nummelin splitting in continuous time as introduced by \citet{locherbach2008num} and discuss other relevant results. In Section 4, the main results of the paper are given. In Section 5, we discuss the estimation of the asymptotic variance for PDMC simulation output. Section 6 illustrates the applicability of our results to diffusion processes. In Section 7, the proofs of the main results are given.

\section[Motivating Example: Estimation of the asymptotic variance of 
Piecewise Deterministic Monte Carlo samplers]{\texorpdfstring{Motivating Example: Estimation of the asymptotic variance of 
Piecewise Deterministic Monte Carlo samplers}{Motivating Example: Estimation of the asymptotic variance of \\
Piecewise Deterministic Monte Carlo samplers}}

Suppose our goal is to sample from a probability distribution $\pi(dx)$ on $\mathbb{R}^d$, which admits Lebesgue density 
\begin{equation}
\label{potential}
\pi(x)= \frac{e^{-U(x)}}{\int_{\mathbb{R}^d}e^{-U(x)}\ dx},
\end{equation}
where $U$ is referred to as the associated potential of the target $\pi$. We will assume that $U$ is twice continuously differentiable and can be evaluated pointwise. Typically, the objective is to compute expectations with respect to this distribution, in other words, we are interested interested in $\pi(f)=\int f(x) \pi(dx)$, for some appropriately integrable function $f$. 

Piecewise Deterministic Monte Carlo (PDMC) samplers consist of a position and a velocity component. We will consider processes $Z=(Z_t)_{t\geq0}$ with $Z_t=(X_t,V_t)$, where $X_t$ and $V_t$ denote the position and velocity component respectively. Our process takes values in $E=\mathfrak{X}\times \mathcal{V}$, where $\mathfrak{X}$ denotes the state-space of the position component and $\mathcal{V}$ denotes the space of attainable velocities. Piecewise Deterministic Markov processes are characterised by their deterministic dynamics between random event times along with a Markov kernel that describes the transitions at events. More specifically, their deterministic dynamics are described by some ordinary differential equation. Both the Zig-Zag process and the Bouncy Particle sampler have piecewise linear trajectories characterised by 
 $$\frac{d X_t}{dt}=V_t\  \quad \textrm{and}\  \quad \frac{d V_t}{dt}=0.$$
Thus the rate of change of the position is described by the velocity, whereas the velocity does not change along the deterministic dynamics. Changes in the velocity occur according to some inhomogeneous Poisson rate $\lambda(Z_t).$ The Poisson events consist of changes in the velocity component of our process. The fundamental idea behind these sampling methods is to choose the event rate and the changes in velocity such that the position component explores the state-space according to the target distribution $\pi$. The event rate should increase in an appropriate manner as the position is moving towards regions of lower probability mass.  

For the Zig-Zag process the set of possible velocities is given by $\mathcal{V}=\{-1,+1\}^d$. We can distinguish $d$ types of events for the Zig-Zag sampler. For every dimension $i$ of our position component, an event will consist of flipping component $i$ of the velocity, while keeping the other $(d-1)$ components unchanged. More specifically, our transition at events can be described by $F_i:\mathcal{V}\rightarrow \mathcal{V}$, which is the mapping that flips the $i$-th component of the velocity, i.e., for $v\in \mathcal{V}$ we have that
the $k$-th entry of $F_i(v)$ is given by
\begin{equation*}
\label{flippie}
    (F_i(v))_k=  \left\{
                \begin{array}{ll}
                  -v_k  & \mbox{for}\  k=i
                  \bigskip\\
                 
                   v_k & \mbox{for}\ k \neq i
                \end{array}
              \right.,
\end{equation*}
where $v_k$ denotes the $k$-th entry of the velocity $v$ for $k=1,\cdots,d$. A change in the $i$-th  component of the velocity will be governed by the inhomogeneous Poisson rate $\lambda_i$.
For the (canonical) Zig-Zag sampler these rates are given by
\begin{equation}
\label{zz_intensity}
 \lambda_i(x,v)=\left(v_i \partial_{x_i}U(x)\right)^+,
\end{equation}
where $(x)^+:=\max\{x,0\}$. Hence for the Zig-Zag process events occur with rate
\begin{equation}
\label{zz_intensity_total}
 \lambda_Z(x,v)=\sum_{i=1}^d\lambda_i(x,v)=\sum_{i=1}^d\left(v_i \partial_{x_i}U(x)\right)^+.
\end{equation}
%
%
%
%
%
%
%
%
%
%
%
The simulation scheme for Zig-Zag is given in Algorithm \ref{algo_zigzag} below.
\par\noindent\rule{\textwidth}{1pt}
\vspace{-0.75cm}
\begin{algorithm}
\label{algo_zigzag}
 \normalfont
\textbf{Zig-Zag Sampler} 
\vspace{-0.55cm}
\par\noindent\rule{\textwidth}{1pt}
 \normalfont 1. Initialise $(X_0,V_0)\leftarrow (x,v)$ and  $T_0\leftarrow 0$\\
2. For $k=1,2,\cdots$ simulate $\tau^1_k,\cdots,\tau^d_k$ according to\\
  \normalfont \textcolor{white}{4.} $$\Pr(\tau^i_k\geq t)=\exp\left(-\int_0^t \lambda_i(X_{\tau_{k-1}}+sV_{\tau_{k-1}},V_{\tau_{k-1}})\right)ds,$$
 \normalfont \textcolor{white}{4.} for $i=1,\cdots,d.$\\
3. For $s\in (0,\tau_k)$ set $(X_{\tau_{k-1}+s},V_{\tau_{k-1}+s}) \leftarrow (X_{\tau_{k-1}}+sV_{\tau_{k-1}},V_{\tau_{k-1}})$\\
4. The time of the $k$-th event is given by $T_k=T_{k-1}+i_0$, with $i_0=\min_i\{\tau^i_k\}^d_{i=1}$\\
5. Update velocity of component $i_0$ at the event time\\
 \textcolor{white}{4   }  $V_{T_k}=F_{i_0}(V_{T_{k-1}})$
 \vspace{-0.45cm}
 \par\noindent\rule{\textwidth}{1pt}
\end{algorithm}

In \citet{zigzagsub} it is shown that if we have 
$$\lambda_i(x,v)-\lambda_i(x,F_i(v))=v_i \partial_{x_i}U(x),\ \textrm{for}\ i=1,\cdots,d,$$
\noindent
then the Zig-Zag process has the desired invariant distribution given by $\pi(dx) \nu(dv)$, where  the target distribution $\pi$ is the marginal distribution of the position component and $\nu$ is a uniform distribution over the set of velocities $\mathcal{V}$. Consider the case when the target $\pi$ is of product form, namely $\pi(x)=\prod_{i=1}^d \pi_i(x_i)$, where each $\pi_i$ is a one-dimensional probability
density. Then the Zig-Zag process with stationary distribution $\pi$ can be defined through $d$ independent one-dimensional Zig-Zag processes. The potential of the product form target is given by $U(x)=-\sum_{i=1}^d \log \pi_i(x_i)$, and therefore the corresponding Poisson event rates are given by \begin{equation}
\label{indep_ZZ}
\lambda_i(x,v)=\left(-v_i \frac{\partial_{x_i} \pi_i(x_i)}{\pi_i(x_i)}\right)^+=\left(v_i \partial_{x_i}U(x_i)\right)^+,
\end{equation}
where $U(x_i)=-\log \pi_i(x_i).$ Because the switching intensity of every coordinate only depends on its own position and velocity, we see that the corresponding Poisson processes are independent. Therefore it follows that the $d$-dimensional Zig-Zag process $Z_t$ with target distribution $\pi$ can be decomposed into $d$ independent one-dimensional Zig-Zag processes $(Z_t^i)_{i=1}^d$, where every coordinate $i$ moves according to $Z_t^i$ which has target distribution $\pi_i$ for $i=1,\cdots,d$.

\noindent
For the simulation scheme of the Bouncy Particle Sampler we refer to \citet{bouchard2018}. In the one-dimensional case the canonical BPS and ZZS are described by the same PDMP. For a more detailed introduction to PDMP-based samplers we refer to \citet{pdmp_intro}. It can be shown that under very mild regularity conditions both sampling processes admit a stationary distribution given by
\vspace{-0.15cm}
\begin{align}
\label{eq_dist}
    \mu(dx,dv)=\pi(dx) 	\upsilon(dv),
\end{align}
where the target distribution $\pi$ is the marginal distribution of the position component and $\nu$ is the marginal distribution of the velocity component.
Moreover, an ergodic law of large numbers holds
\vspace{-0.15cm}
\begin{align*}
    \lim_{T\rightarrow \infty}\hat{\mu}_T(f):=\lim_{T\rightarrow \infty} \frac{1}{T}\int_0^Tf(X_s,V_s) \ ds=\int_E f(x,v)\mu(dx,dv)=:\mu(f),
\end{align*}
for all $\mu$-integrable $f$. Given the independence of position and velocity at equilibrium, as seen in (\ref{eq_dist}), the time average of the position component
 is a natural estimator for the space average $\pi(f)$. In order to assess the accuracy of our sampling method, we require a central limit theorem to hold;
  \begin{equation}
 \label{intro_clt1}
     \sqrt{T}\left(\frac{1}{T}\int_0^T f(X_s)ds-\pi(f)\right) \xrightarrow{d} \mathcal{N}(0,\Sigma_f) \ \  \textrm{as}\ T\rightarrow \infty,
 \end{equation}
 \noindent
  and estimate the corresponding asymptotic variance $\Sigma_f$. Moreover, the asymptotic variance is also useful for determining the efficiency of the sampling algorithm via measures such as the effective sample size, see for example \citet{gong2016} or \citet{multivariate_output}. The estimation of the asymptotic variance is also required for the implementation of stopping rules, which consists of justifiable criteria for termination of the simulation. In order to validate stopping rules that guarantee a desired level of precision, \citet{glynn} show that the estimator of the asymptotic covariance matrix must be strongly consistent. Strong invariance principles play a central role in the analysis of estimators of the asymptotic variance of Markov Chain Monte Carlo (MCMC) sampling algorithms, see for example \citet{damerdji1991,damerdji1994}, \citet{flegal_bm}, and \citet{multivariate_consistency}.  In this paper, we obtain strong approximation results for broad classes of ergodic Markov processes. We show that for PDMPs many results regarding estimation of the asymptotic variance immediately carry over.

\section{Nummelin splitting in continuous time} Let $X=(X_t)_{t\geq 0}$ be a stochastic process defined on a filtered probability space $(\Omega, \mathcal{F}, (\F_t)_{t\geq 0}, \mathbb{P}_x )$, with Polish state space $(E,\mathscr{E})$ and initial value $X_0=x.$ We consider the case where $X$ is a positive Harris recurrent strong Markov process with transition semigroup given by $(P_t)_{t\geq0}$ with finite invariant measure $\pi$. By definition of positive Harris recurrence, $\pi$ can be normalised to be a probability measure and we have that
\begin{equation}
\pi(A)>0 \implies \mathbbm{P}_x \left(\int_0^\infty \mathbbm{1}_{\{ X_s \in A \}}ds = \infty \right)=1, \quad x\in E.
\end{equation}
Throughout this paper we will additionally require ergodicity of the considered processes. We say that a Markov process $X$ is \emph{ergodic} with convergence rate $\Psi$ if
\begin{equation}
\label{ergodic}
 \norm{P_t(x,\cdot)-\pi}_{TV}\leq V(x)\Psi(t),\quad \textrm{for all} \ x \in E \ \textrm{and}\ t\geq 0,
\end{equation}
 where $V$ is some positive $\pi$-integrable function and $\Psi$ some positive rate tending to zero. Furthermore, a process is called polynomially or exponentially ergodic if $\Psi$ decays accordingly. For a more thorough discussion of these definitions we refer to \citet{meyn1993}. 

The resolvent chain $\bar{X}=(\bar{X}_n)_{n \geq 0}$ is obtained by observing the process at independent exponential times, i.e., $\bar{X}_n:=X_{T_n}$ for $n\geq0$. Here $(T_n)_{n \geq 0}$ denote the sampling times at which we observe the process $X$, which are defined as $T_0:=0$ and $T_n:=\sum_{k=1}^n \sigma_k$, where $(\sigma_k)_{k\geq 1}$ denote a sequence of i.i.d. exponential random variables. The resolvent chain will inherit positive Harris recurrence from the original process. The transition kernel of the process $\bar{X}=(\bar{X}_n)_{n\in \mathbb{N}}$ is given by 
\begin{equation}
\label{resolvent_kernel}
    U(x,A)= \int_0^\infty P_t(x,A) e^{-t}dt,
\end{equation}
and satisfies the one-step minorisation condition, see for example \citet{hopfner} or  \citet{revuz},
\begin{equation}
\label{min_resolvent}
    U(x,A) \geq h \otimes \nu (x,A),
\end{equation}
where $h \otimes \nu (x,A)=h(x)\nu(A)$, with $h(x)=\alpha \mathbbm{1}_C(x)$ for some $\alpha \in (0,1)$, a measurable set $C$ with $\pi(C)>0$, and $\nu(\cdot)$ a probability measure equivalent to $ \pi(\cdot \cap C)$. 

The minorisation condition of the resolvent chain motivates the introduction of the kernel $K((x,u),dy): E \times [0,1]\rightarrow E$ given by 
\begin{equation}
\label{KK}
    K((x,u),dy)=  \left\{
                \begin{array}{ll}
                  \nu(dy)   & \mbox{for}\  (x,u) \in C \times [0,\alpha]
                  \bigskip\\
                 
                  W(x,dy)  & \mbox{for} \ (x,u) \in C \times (\alpha,1]
                  \bigskip\\
                  
                  U(x,dy)  & \mbox{for}\  x \notin C 
                \end{array}
              \right.,
\end{equation}
where the residual kernel $W(x,dy)$ is defined as 
\begin{equation}
\label{residualkernel}
    W(x,dy)=\frac{U(x,dy)-\alpha \nu(dy)}{1-\alpha}.
\end{equation}
Since the resolvent chain is also positive Harris recurrent, it will hit $C$ infinitely often. Given that the resolvent chain has hit $C$, with probability $\alpha$ the chain will move independently of its past according to the small measure $\nu$ and with probability $(1-\alpha)$ it will move according to the residual kernel $W$. By the Borel-Cantelli lemma the residual chain will generate according to $\nu$ infinitely often. Let $R_k$ denote the $k$-th time that the resolvent chain moves according to $\nu$. The randomised stopping times $(R_k)_k$ serve as regeneration epochs for the resolvent; for every $k$, $\bar{X}_{R_k}$ has law $\nu$ and is independent of both its past and of $R_k$.  The implied regenerative properties that the process $X$ obtains through its resolvent are made explicit with the approach of \citet{locherbach2008num}. 
 Their framework requires the following regularity conditions on the transition semigroup of the process $X$:
 
\begin{assumption} 
\label{AssumptionLL}
\begin{itemize}
    \item[(i)]
The semigroup $(P_t)_{t\geq0}$ is  Feller, i.e., for every $A \in \mathscr{E}$, the mapping $x \longmapsto P_t(x,A)$ is bounded.
\item[(ii)] There exists a $\sigma$-finite  measure $\Lambda$ on $(E,\mathscr{E})$ such that for every $t>0$, $P_t(x,dy)=p_t(x,dy)\Lambda(dy)$, with $(t,x,y)\longmapsto p_t(x,y)$ jointly measurable.
\end{itemize}
\end{assumption}

At the so-called sampling times of the process $X$, we can apply the Nummelin splitting technique to the resolvent chain. We then fill in the original process between the sampling times. Following this procedure, \citet{locherbach2008num} construct on an extended probability space a process $Z$ with state space $E \times [0,1] \times E$, that admits a recurrent atom. The first coordinate of $Z$ has the same law as the original process $X$, the second coordinate denotes the auxiliary variables employed in order to generate draws from the resolvent chain via the splitting procedure, and the third coordinate corresponds to the subsequent values of the resolvent chain. 

The process $Z=(Z^1_t,Z^2_t,Z^3_t)_{t\geq0}$ can be constructed according to the following procedure.
Firstly, let $Z^1_0=X_0=x$. Independently from $Z_1$ generate $Z_0^2\sim U[0,1]$, where $U[0,1]$ denotes the uniform distribution on the unit interval. Given $\{Z_0^2=u\}$, draw $Z_0^3$ according to $K((x,u),dx')$. Then inductively for $n\geq 1$, on $Z_0=(x,u,x')$:
\begin{enumerate}[I.]
    \item Choose $\sigma_{n+1}$ according to
    \begin{equation}
    \label{cond_samplingtime}
    \left(\frac{p_t(x,x')}{u(x,x')}\mathbbm{1}_{\{0<u(x,x')< \infty\}} + \mathbbm{1}_{\{u(x,x') \in \{0, \infty\}\}} \right) e^{-t}dt \ \ \textrm{on}\ \ \mathbb{R}_{+}.\end{equation}
     The next sampling time $T_{n+1}$ is given by $T_n + \sigma_{n+1}$.
    
    \item On $\{\sigma_{n+1}=t\}$, put $Z^2_{T_n+s}:=u$ and $Z^3_{T_n+s}:=x'$ for all $0\leq s < t$.
    \item Draw a bridge of $Z^1$ conditioned on its endpoints $Z^1_{T_n}$ and $Z^1_{T_{n+1}}$, so that for every  $0\leq s < t$ we obtain
    \begin{equation}
    \label{ZZ_bridgepoint1}
    Z^1_{T_n+s} \sim \frac{p_s(x,y)p_{t-s}(y,x')}{p_t(x,x')}\mathbbm{1}_{\{p_t(x,x')>0\}}\Lambda(dy)\end{equation}
    Let $Z^1_{T_n+s}:= x_0$ for some fixed $x_0 \in E$ on ${\{p_t(x,x')=0\}}$. Moreover, given
    $Z^1_{T_n+s}=y$ on $s+u <t$ we have that
    \begin{equation}
    \label{ZZ_bridgepoint2}
    Z^1_{T_n+s+u} \sim \frac{p_u(y,y')p_{t-s-u}(y',x')}{p_{t-s}(y,x')}\mathbbm{1}_{\{p_{t-s}(y,x')>0\}}\Lambda(dy)
    \end{equation}
    Again, on ${\{p_{t-s}(y,x')=0\}}$, let $Z^1_{T_n+s}=x_0.$
    \item At jump time $T_{n+1}$ we have $Z^1_{T_{n+1}}:=Z^3_{T_n}=x'$.
    Draw $Z^2_{T_{n+1}}$ independently of $Z_s, s<T_{n+1}$, uniformly on the unit interval.  Given $\{Z^2_{T_{n+1}}=u'\}$, generate 
    $$Z^3_{T_{n+1}}\sim K((x',u'),dx'').$$
\end{enumerate}
Note that in the construction of $Z$ the inter-sampling times $(\sigma_n)_{n\geq1}$ are drawn according to (\ref{cond_samplingtime}), their conditional distribution given the starting and endpoint of the sampled chain. Equation (\ref{ZZ_bridgepoint1}) and (\ref{ZZ_bridgepoint2}), describe the distributions of points in a bridge of the process $X$. The first coordinate of $Z$ consists of bridges drawn according to the law of the original process $X$, between realisations of the resolvent chain.  The results of \citet{locherbach2008num,locherbach44} that we work with are given in the following propositions. Firstly, the first coordinate of $Z$ has the desired distribution.
\begin{proposition}
\label{prop1}The constructed process $Z$ is a Markov process with respect to its natural filtration $\mathbb{F}$. Moreover, the first coordinate $Z^1$ is equal in law to our process $X$, namely, 
$$\mathcal{L}((X_t)_{t\geq0}|X_0=x)=\mathcal{L}((Z^1_t)_{t\geq0}|Z^1_0=x).$$
Moreover, $(T_n-T_{n-1})_{n\geq 1}$ are i.i.d exponential random variables and are independent of $Z^1$; therefore, we also have that  
$$\mathcal{L}((X_{T_n})_{n\geq0}|X_0=x)=\mathcal{L}((Z^1_{T_n})_{n\geq0}|Z^1_{0}=x).$$
\end{proposition}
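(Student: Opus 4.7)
The plan is to proceed in three stages: first verify the Markov property of $Z$ by inspecting the step-by-step construction, then identify the law of $Z^1$ by matching finite-dimensional distributions with those of $X$, and finally extract the exponential distribution and independence of the inter-sampling times.

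For the Markov property, I would note that on the event $\{Z_{T_n} = (x,u,x')\}$, the construction draws $\sigma_{n+1}$ from the conditional density (\ref{cond_samplingtime}), which depends only on $(x,x')$; the coordinates $Z^2, Z^3$ remain constant on $[T_n, T_{n+1})$; the restriction of $Z^1$ to this interval is the Markov bridge of $X$ with endpoints $(x,x')$ of length $\sigma_{n+1}$, specified by (\ref{ZZ_bridgepoint1})--(\ref{ZZ_bridgepoint2}); and at $T_{n+1}$ the new values of $Z^2, Z^3$ are generated from $K$ using only $x' = Z^1_{T_{n+1}}$ and fresh independent randomness. The evolution after $T_n$ thus depends on the past only through $Z_{T_n}$, and the bridge representation inside $[T_n, T_{n+1})$ is itself Markov by the chain rule (\ref{ZZ_bridgepoint2}).

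For the distributional identity $Z^1 \stackrel{d}{=} X$, I would proceed by induction on the sampling index $n$. The key computation is that multiplying the density in (\ref{cond_samplingtime}) by $u(x,x')\,\Lambda(dx')$ returns the joint density $p_t(x,x')\,\Lambda(dx')\,e^{-t}\,dt$ of $(X_\tau, \tau)$, where $\tau$ is an independent Exp$(1)$ clock started at $x$. Combined with the observation that the third coordinate kernel $K((x,u),dx')$ integrated over $u \sim U[0,1]$ reproduces exactly the resolvent $U(x,dx')$ (this is the very motivation for the definition (\ref{KK})), this shows that the conditional joint law of $(Z^1_{T_{n+1}}, \sigma_{n+1})$ given $Z^1_{T_n} = x$ agrees with that of $(X_\tau, \tau)$ under $\P_x$. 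The bridge prescription (\ref{ZZ_bridgepoint1})--(\ref{ZZ_bridgepoint2}) then inserts the intermediate values of $Z^1$ on $[T_n, T_{n+1})$ according to the Markov bridge law of $X$, so concatenating over $n$ reproduces all finite-dimensional distributions of $X$ by the Markov property.

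For the last claim, marginalising (\ref{cond_samplingtime}) over $x'$ against $U(x,dx') = u(x,x')\,\Lambda(dx')$ gives
\[
\int_E \frac{p_t(x,x')}{u(x,x')}\,u(x,x')\,\Lambda(dx')\,e^{-t}\,dt \;=\; e^{-t}\,dt,
\]
so each $\sigma_{n+1}$ is Exp$(1)$; independence across $n$ follows from the Markov property together with the fact that this marginal is free of $(x,x')$. Independence of $(T_n - T_{n-1})_n$ from $Z^1$ is then the standard observation that the joint law of $Z^1$ and the grid $(T_n)$ constructed here coincides with first sampling $X$ under $\P_x$ and then superimposing an independent unit-rate Poisson point process on $\R_+$, which can be made rigorous by Fubini on the product density. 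The main obstacle I foresee is the measure-theoretic bookkeeping at the corner cases $u(x,x') \in \{0,\infty\}$ and $\{p_t(x,x') = 0\}$, where the construction falls back to the deterministic value $x_0$; these events must be shown to be $\P_x$-null along a typical trajectory, which follows from the joint measurability in Assumption \ref{AssumptionLL}(ii) and Fubini, but requires some care to verify uniformly across the induction.
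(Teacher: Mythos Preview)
The paper does not supply its own proof of this proposition: it is quoted as a result of L\"ocherbach and Loukianova (the sentence preceding Proposition~\ref{prop1} reads ``The results of \citet{locherbach2008num,locherbach44} that we work with are given in the following propositions''), and Section~6 contains no argument for it. So there is no in-paper proof to compare your proposal against.

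That said, your sketch is the standard route and matches how the result is established in the cited source. The two computations that carry the argument --- that $\int_0^1 K((x,u),dx')\,du = U(x,dx')$ by construction of (\ref{KK})--(\ref{residualkernel}), and that multiplying (\ref{cond_samplingtime}) by $u(x,x')\,\Lambda(dx')$ disintegrates $p_t(x,x')\,e^{-t}\,\Lambda(dx')\,dt$ --- are exactly the ones needed, and your induction over the sampling grid together with the bridge law is the correct way to propagate the identity of finite-dimensional distributions. One small point worth tightening: for the Markov property of $Z$ you have argued Markovianity at the sampling times $T_n$; to get it at an arbitrary time $t \in (T_n, T_{n+1})$ you also need that, conditionally on $Z_t$, the residual waiting time $T_{n+1}-t$ and the remaining bridge are determined by $(Z^1_t, Z^3_t)$ alone. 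This follows from the memorylessness hidden in (\ref{cond_samplingtime}) combined with the bridge factorisation (\ref{ZZ_bridgepoint2}), but it deserves an explicit line. Your remark about the null sets $\{u(x,x') \in \{0,\infty\}\}$ and $\{p_t(x,x')=0\}$ is well placed; Assumption~\ref{AssumptionLL}(ii) and Fubini dispose of them, and this is indeed where the bookkeeping is concentrated.
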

Moreover, the process $X$ is embedded in a richer process $Z$, which admits a recurrent atom $A:=C \times [0,\alpha] \times E$ in the sense of the following proposition. 

\begin{proposition}
\label{recurrent_atom}
Let $(S_n,R_n)$ be a sequence of stopping times defined as $S_0=R_0:=0$ and 
$$S_{n+1}:=\inf \{T_m > R_n: Z_{T_m}\in A\} \ \ \textrm{and} \ \ R_{n+1}:=\inf \{T_m: T_m> S_{n+1}\}.$$
Then $Z_{R_n^+}$ is independent of $\mathcal{F}_{R_{n-1}}$ for all $n\geq 1$ and $(Z_{R_n})_{n\geq 1}$ is an i.i.d sequence with $$Z_{R_n}\sim \nu(dx)\lambda(du)K((x,u),dx')\ \ \textrm{for all}\ n\geq 1.$$
\end{proposition}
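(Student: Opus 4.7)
The plan is to trace the splitting construction carefully and pinpoint the step at which the dependence on the past is severed by a draw from $\nu$. The key observation is that the atom $A = C \times [0,\alpha] \times E$ is designed so that the kernel $K$ restricted to $A$ equals $\nu$: whenever $(Z^1_{T_m}, Z^2_{T_m}) \in C \times [0,\alpha]$, the draw $Z^3_{T_m} \sim K((Z^1_{T_m},Z^2_{T_m}),\cdot)$ prescribed in step IV reduces to $Z^3_{T_m} \sim \nu(\cdot)$, which does not depend on $(Z^1_{T_m}, Z^2_{T_m})$ and hence is independent of $\F_{T_m^-}$. Before exploiting this, I would first verify that $S_n < \infty$ and $R_n < \infty$ almost surely: since $\bar X$ inherits positive Harris recurrence, it visits $C$ at infinitely many sampling times, and at each such visit $Z^2$ is an independent uniform, giving an independent event of probability $\alpha$ of landing in $A$. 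A Borel–Cantelli argument then yields infinitely many visits to $A$.

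Next I would analyse the joint law of $Z_{R_n}$. By step IV applied at $S_n$, $Z^3_{S_n} \sim \nu$ and is independent of $\F_{S_n^-}$, and by step IV applied at the next sampling time $R_n$ we have $Z^1_{R_n} = Z^3_{S_n}$. Hence $Z^1_{R_n} \sim \nu$, independent of $\F_{R_{n-1}} \subset \F_{S_n^-}$. Then, still at the sampling time $R_n$, $Z^2_{R_n}$ is drawn uniformly on $[0,1]$ independently of $(Z_s)_{s<R_n}$, and finally $Z^3_{R_n} \sim K((Z^1_{R_n}, Z^2_{R_n}), \cdot)$. Chaining these three disintegrations yields
$$Z_{R_n} \sim \nu(dx)\,\lambda(du)\,K((x,u), dx'),$$
and the triple is independent of $\F_{R_{n-1}}$.

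To upgrade this from the value $Z_{R_n}$ to the whole post-$R_n$ trajectory $Z_{R_n^+}$, I would invoke the Markov property of $Z$ from Proposition~\ref{prop1}: the inductive construction restarted at $R_n$ uses only $Z_{R_n}$ together with fresh randomness (new inter-sampling times from step I, new uniforms, new bridges from step III, new draws from $K$ in step IV), all independent of $\F_{R_n}$. Since $Z_{R_n}$ is independent of $\F_{R_{n-1}}$ and the fresh randomness is independent of $\F_{R_n} \supseteq \F_{R_{n-1}}$, the entire process $Z_{R_n^+}$ is independent of $\F_{R_{n-1}}$. The i.i.d.\ conclusion for $(Z_{R_n})_{n\geq 1}$ then follows by iteration: each $Z_{R_k}$ with $k \leq n-1$ is $\F_{R_{n-1}}$-measurable since $R_k$ is a stopping time bounded by $R_{n-1}$, so the independence of $Z_{R_n^+}$ from $\F_{R_{n-1}}$ already includes independence from $(Z_{R_k})_{k<n}$, while the common law $\nu(dx)\lambda(du)K((x,u),dx')$ is established above.

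The main obstacle I anticipate is the bookkeeping around the random time $S_n$. Because $S_n$ is defined by a hitting condition on the product event $\{Z_{T_m} \in A\}$, one must verify that conditioning on $\{Z_{S_n} \in A\}$ does not disturb the "fresh uniform + fresh $\nu$-draw" structure of step IV that generated $(Z^2_{S_n}, Z^3_{S_n})$. The clean resolution is that, on this event, the conditional law of $Z^3_{S_n}$ given $\F_{S_n^-}$ equals $\nu$ \emph{pointwise} in $(Z^1_{S_n}, Z^2_{S_n})$, so the conditioning is vacuous and no selection bias re-introduces dependence on the past; this is exactly what distinguishes Nummelin's splitting from a naïve application of Harris recurrence.
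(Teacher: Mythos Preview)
The paper does not supply its own proof of this proposition; it is quoted, together with Propositions~\ref{prop1} and~\ref{prop3}, as one of the structural results of L\"ocherbach and Loukianova on which the later theorems rest. Your reconstruction is essentially the argument underlying their result: the atom $A=C\times[0,\alpha]\times E$ is engineered so that $K$ restricted to $A$ coincides with $\nu$, hence at the hitting time $S_n$ the draw $Z^3_{S_n}$ in step~IV is a fresh $\nu$-sample independent of $\mathcal{F}_{S_n^-}\supseteq\mathcal{F}_{R_{n-1}}$; this becomes $Z^1_{R_n}$ at the next sampling time, and together with the independent uniform $Z^2_{R_n}$ and the subsequent $K$-draw yields the product law $\nu\otimes\lambda\otimes K$. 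Your handling of the potential selection-bias issue in the final paragraph---that conditioning on $\{Z_{S_n}\in A\}$ leaves the law of $Z^3_{S_n}$ equal to $\nu$ pointwise in $(Z^1_{S_n},Z^2_{S_n})$---is exactly the mechanism that makes Nummelin splitting work, and the lift from $Z_{R_n}$ to the full post-$R_n$ trajectory via the Markov construction (fresh inter-sampling times, bridges, uniforms) is sound. There is nothing to compare against in the present paper, but your sketch would serve as a correct proof.
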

The stopping times $\{S_n\}_n$ thus denote the hitting times of the recurrent atom $A$ for the jump process $(Z_{T_n})_n$, and $\{R_n\}_n$ denote the implied regeneration epochs of the process $Z.$ As a direct consequence, we obtain the following regenerative structure for the original process.
\begin{proposition}
\label{prop3}Let $f$ be a measurable $\pi$-integrable function, then we can construct a sequence of increasing stopping times $\{R_n\}_n$ with $R_0=0$ and
\label{NOMU}
 $$ \xi_n := \int_{R_{n-1}}^{R_n}f(X_s) \ ds,\ \ n \geq 1,$$
 such that the sequence $\{\xi_n\}_n$ is a stationary  sequence under $\mathbb{P}_\nu.$ Moreover, for $n\geq 2$, $\xi_n$ is independent of $\mathcal{F}_{R_{n-2}}.$
\end{proposition}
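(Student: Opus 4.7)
The plan is to transfer the question to the enriched process $Z$ and exploit the regenerative structure of Proposition \ref{recurrent_atom}. By Proposition \ref{prop1} we may identify the law of $(X_t)_{t\geq 0}$ with that of $(Z^1_t)_{t\geq 0}$, so it suffices to define the stopping times $R_n$ exactly as in Proposition \ref{recurrent_atom} and verify the two claims for $\xi_n = \int_{R_{n-1}}^{R_n} f(Z^1_s)\, ds$.

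For the independence claim, fix $n\geq 2$. Proposition \ref{recurrent_atom} applied at index $n-1$ yields that $Z_{R_{n-1}^+}$ is independent of $\mathcal{F}_{R_{n-2}}$. Since $R_{n-1}$ is an $\mathbb{F}$-stopping time and $Z$ is strongly Markov by Proposition \ref{prop1}, the post-$R_{n-1}$ trajectory $(Z_{R_{n-1}+t})_{t\geq 0}$ is a measurable function of $Z_{R_{n-1}^+}$ together with a copy of the driving randomness that is independent of $\mathcal{F}_{R_{n-1}}$. The integral $\xi_n$ and the length $R_n-R_{n-1}$ are measurable functionals of this post-$R_{n-1}$ trajectory, so $\xi_n$ depends only on a pair $(Z_{R_{n-1}^+},\text{fresh randomness})$ that is jointly independent of $\mathcal{F}_{R_{n-2}}\subseteq\mathcal{F}_{R_{n-1}}$, yielding the stated independence.

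For stationarity, I would first verify that under $\mathbb{P}_\nu$ the initial state $Z_0$ has the regeneration law. By the initialisation procedure, $Z^1_0\sim\nu$, $Z^2_0\sim U[0,1]$ independently, and $Z^3_0\sim K((Z^1_0,Z^2_0),\cdot)$, so $Z_0\sim \nu(dx)\lambda(du)K((x,u),dx')$, matching the distribution of $Z_{R_k}$ for $k\geq 1$ in Proposition \ref{recurrent_atom}. Combined with the independence clause of that proposition, a short induction gives that $(Z_{R_k^+})_{k\geq 0}$ is i.i.d. Applying the strong Markov property at $R_k$, the shifted trajectory $(Z_{R_k+t})_{t\geq 0}$, conditional on $Z_{R_k^+}$, is distributed as $Z$ started from $Z_{R_k^+}$, and so unconditionally has the same law as $(Z_t)_{t\geq 0}$ under $\mathbb{P}_\nu$. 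Because the subsequent regeneration times $R_{k+j}-R_k$ and the functionals $\xi_{k+j}$ are built from the shifted trajectory by the same measurable recipe used to obtain $R_j$ and $\xi_j$ from $(Z_t)_{t\geq 0}$, one concludes $(\xi_{k+1},\xi_{k+2},\ldots)\stackrel{d}{=}(\xi_1,\xi_2,\ldots)$ for every $k\geq 0$, which is exactly the stationarity of $(\xi_n)_{n\geq 1}$.

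The main obstacle beyond this measure-theoretic bookkeeping is that the bridge construction in steps III--IV couples $Z^1$ on $[R_{n-1},R_n]$ with the bridge endpoints at intermediate sampling times, and so with information that is future relative to the resolvent chain but past relative to the continuous-time process. To apply the strong Markov property cleanly one must encapsulate the generation of the inter-sampling exponentials, the uniform auxiliary variables, and the conditional bridge laws (\ref{ZZ_bridgepoint1})--(\ref{ZZ_bridgepoint2}) as a single block of fresh randomness drawn strictly after $R_{n-1}$. This is precisely what the Markov property of $Z$ in Proposition \ref{prop1} encodes, but turning it into a formal statement is where the real technical work lies.
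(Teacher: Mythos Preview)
Your argument is correct and is exactly the ``direct consequence'' the paper alludes to. The paper does not supply its own proof of this proposition: it is quoted from \citet{locherbach2008num,locherbach44}, with the sentence ``As a direct consequence, we obtain the following regenerative structure for the original process'' serving as the entire justification. Your proposal makes explicit precisely the two steps that phrase compresses---namely, reading off the independence of $\xi_n$ from $\mathcal{F}_{R_{n-2}}$ via the statement $Z_{R_{n-1}^+}\perp\mathcal{F}_{R_{n-2}}$ in Proposition~\ref{recurrent_atom} combined with the strong Markov property of $Z$, and obtaining stationarity under $\mathbb{P}_\nu$ by checking that $Z_0$ already carries the regeneration law $\nu(dx)\lambda(du)K((x,u),dx')$ so that the post-$R_k$ process is equal in law to the original. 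Your closing caveat about the bridge construction is well placed: the point that the whole of $Z$ (not merely $Z^1$) is Markov is what absorbs the look-ahead to the next resolvent value, and this is exactly the content of Proposition~\ref{prop1}.
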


The regenerative structure given in Proposition \ref{prop3} was also noted by \citet{sigman}. They define a process $X$ to be \emph{one-dependent regenerative} if there exists, on a possibly enlarged probability space, a sequence of randomised stopping times $R_n$ with corresponding cycle lengths $\rho_n=R_{n+1}-R_n$ such that $\{\left(X_{R_n+t}\right)_{t\geq 0},(\rho_{n+k})_k\}$ has the same distribution for each $n\geq 1$ and 
are independent of $\{(\rho_n)_{n=1}^{k-1}, (X_t)_{t<R_{n-1}}\}$ for $n\geq 2.$
Note that according to this definition the initial cycle is allowed to have a different distribution.\hspace{-0.1cm} \citet{locherbach2008num} give a  constructive approach towards this result, in which they explicitly define the corresponding stopping times and the recurrent atom. By the implied regenerative structure of $X$, we obtain the following characterisation of the stationary measure.

\begin{proposition}[\protect{\citet[Theorem 2]{sigman}}]
\label{ergo_sig}
Let $X$ be a positive recurrent one-dependent regenerative process, then we can characterise its stationary measure as follows
\begin{equation}
\pi(A)=\dfrac{1}{\varrho} \ \mathbb{E}_\nu \int_0^{R_1}\mathbbm{1}_{\{X_s \in A\}}\ ds,
\end{equation}
where $\varrho$ is defined as $\mathbb{E}_\nu R_1.$
Moreover, we have the following erdogic law of large numbers
\begin{equation}
\lim_{T\rightarrow \infty}\frac{1}{T}\int_0^Tf(X_s)ds=\dfrac{1}{\varrho} \ \mathbb{E}_\nu \int_0^{R_1}f(X_s)ds \ \ \textrm{a.s.},
\end{equation}
\end{proposition}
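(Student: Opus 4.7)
The plan is to derive both assertions from Birkhoff's ergodic theorem applied to the cycle functionals $\xi_n$. Under $\mathbb{P}_\nu$, Proposition \ref{prop3} furnishes a stationary sequence $(\xi_n)_{n\geq 1}$ in which $\xi_n$ is independent of $\mathcal{F}_{R_{n-2}}\supseteq\sigma(\xi_1,\dots,\xi_{n-2})$ for every $n\geq 3$. Any stationary sequence with such a finite range of dependence is mixing under the canonical shift and therefore ergodic, so Birkhoff's theorem yields
\begin{equation*}
\frac{1}{n}\sum_{k=1}^n \xi_k \longrightarrow \mathbb{E}_\nu \xi_1 \quad \mathbb{P}_\nu\text{-a.s.}
\end{equation*}
Specialising to $f\equiv 1$ gives the analogous cycle-length law of large numbers $R_n/n \to \varrho$ a.s.

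I would then transfer this discrete-time convergence to continuous time by a standard renewal sandwich. Writing $N(T):=\sup\{n\geq 0 : R_n\leq T\}$, positive recurrence forces $N(T)\to\infty$ a.s., and inversion of the cycle-length LLN yields $N(T)/T \to 1/\varrho$ a.s. For nonnegative $f$, additivity of the cycle decomposition produces
\begin{equation*}
\sum_{k=1}^{N(T)} \xi_k \;\leq\; \int_0^T f(X_s)\,ds \;\leq\; \sum_{k=1}^{N(T)+1} \xi_k;
\end{equation*}
dividing by $T$, applying the cycle LLN to both bounds, and decomposing a general $\pi$-integrable $f$ into positive and negative parts gives
\begin{equation*}
\frac{1}{T}\int_0^T f(X_s)\,ds \longrightarrow \frac{\mathbb{E}_\nu \xi_1}{\varrho} \quad \mathbb{P}_\nu\text{-a.s.},
\end{equation*}
which is the ergodic LLN stated in the proposition.

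The first assertion then follows by specialising the LLN to $f=\mathbbm{1}_A$. The right-hand side defines a candidate probability measure $\tilde\pi(A):=\varrho^{-1}\mathbb{E}_\nu\int_0^{R_1}\mathbbm{1}_A(X_s)\,ds$ (finite because $\mathbbm{1}_A\leq 1$ gives $\tilde\pi(A)\leq 1$ and $\tilde\pi(E)=1$), and the LLN just derived shows that the occupation-time average of $A$ converges $\mathbb{P}_\nu$-a.s.\ to $\tilde\pi(A)$. By positive Harris recurrence the same average also converges $\mathbb{P}_\nu$-a.s.\ to $\pi(A)$ and $\pi$ is the unique invariant probability, so $\pi=\tilde\pi$. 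Alternatively one can verify $\tilde\pi P_t=\tilde\pi$ directly from the strong Markov property at the regeneration epoch $R_1$, bypassing uniqueness.

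The main obstacle is the implicit circularity in the cycle LLN for general $\pi$-integrable $f$: a priori one does not know that $\mathbb{E}_\nu \int_0^{R_1}|f(X_s)|\,ds<\infty$, which is precisely what the characterisation of $\pi$ furnishes. I would resolve this by first carrying out the programme above only for bounded measurable $f$, establishing $\pi=\tilde\pi$ from the indicator case, and then deducing the $\pi$-integrable case by truncation and monotone convergence once both sides of the identity $\int f\,d\pi=\varrho^{-1}\mathbb{E}_\nu\int_0^{R_1}f(X_s)\,ds$ are known to be finite.
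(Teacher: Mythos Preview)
The paper does not prove this proposition; it is quoted verbatim from \citet[Theorem 2]{sigman} and used as a black box. There is therefore no ``paper's own proof'' to compare against.

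That said, your argument is sound and is essentially the standard route to such regenerative ergodic theorems. The key observation---that a stationary one-dependent sequence is strongly mixing and hence ergodic, so Birkhoff applies to the cycle functionals---is correct, and the renewal sandwich to pass from discrete to continuous time is the usual device. Your handling of the circularity (first bounded $f$ to identify $\pi=\tilde\pi$, then extend to $\pi$-integrable $f$ by monotone convergence) is the right resolution. Two minor remarks: first, your proof is written under $\mathbb{P}_\nu$, which is all the paper actually needs, but the proposition as stated in Sigman holds for arbitrary initial distributions (the first cycle is allowed a different law), and extending requires one extra step showing the delay cycle $\int_0^{R_1}|f(X_s)|\,ds$ is a.s.\ finite; second, your appeal to Proposition~\ref{prop3} is specific to the Nummelin construction rather than to general one-dependent regenerative processes, but in the paper's context this distinction is immaterial.
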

for all $f:\mathbb{R}^d\rightarrow \mathbb{R}^p$ with $\pi(\abs{f})<\infty$.
Note that the normalisation constant of $\pi$ in Proposition \ref{ergo_sig} is finite and non-zero due to the positive Harris recurrence of the process.

\begin{remark}
The framework of \citet{locherbach2008num,locherbach44}
 does not require ergodicity. Moreover, it is important to note that contrary to the classically regenerative case, Proposition \ref{ergo_sig} does not imply convergence in total variation to the stationary measure. For a counterexample see  \citet[Remark 3.2]{sigman}. 
\end{remark}

For our applications we will require ergodicity and hence we must additionally impose this as stated in (\ref{ergodic}). These ergodicity requirements are usually established through Foster–Lyapunov drift conditions; see \citet{down_exponential} and \citet{fort_subgeometric} for exponential and polynomial ergodicity respectively. These results have been applied to several classes of diffusion processes, see for example \citet[Theorem 8.3 and 8.4]{cattiaux} and \citet[Theorem 3.1 and 4.1]{stramer1999}. 

For PDMPs, \citet{ZZ_ergodicity} show exponential ergodicity of the Zig-Zag process for target distributions that have a non-degenerate local maximum and appropriately decaying tails.  In \citet{exp_ergo_bouncy} and \citet{durmus2020} conditions for exponential ergodicity of the Bouncy Particle Sampler are given. Utilising hypocoercivity techniques, \citet{andrieu2021subgeometric} establish polynomial rates of convergence for PDMPs with heavy-tailed stationary distributions. When we are concerned with PDMPs we will require the following regularity conditions on the stationary density:
\begin{assumption}
\label{a_zz}
Assume that $\pi$ is twice continuously differentiable, strictly positive, has a non-degenerate local maximum and $\lim_{\norm{x}\rightarrow \infty} \pi(x)=0.$
\end{assumption}
These regularity conditions are often imposed in order to analyse the ergodic behaviour of PDMPs. Assumption \ref{a_zz} with accompanying conditions on the decay of the tails of the target distribution are used to show various rates of ergodicity.

\section{Main Theorems}
The most straightforward approach for obtaining a strong approximation result for Markov processes would be through ergodicity requirements. \citet{MIX_SIP} show that a multivariate strong invariance principle holds for sums of random vectors satisfying a strong mixing condition; see also Theorem \ref{Koning_SIP}. This mixing condition can easily be satisfied by guaranteeing an appropriate rate of ergodicity of the process. All proofs are provided in Section 7.

\begin{theorem}
\label{Multi_SIP}
Let $X$ be polynomially ergodic of order $\beta \geq (1+\varepsilon)(1+2/\delta)$ for some $\varepsilon,\delta>0.$ Then for every initial distribution and for all $f:E\rightarrow \mathbb{R}^{d}$ with $\pi
(\norm{f}^{2+\delta})< \infty$,  we can construct a process that is equal in law to $X$ together with a standard $p$-dimensional Brownian motion $W=(W(t))_{t\geq0}$ on some probability space such that
 \begin{equation}
 \label{My_SIP}
 \norm{ \int_0^Tf(X_t) \ dt-T\pi(f)-\Sigma_f^{1/2} W(T)}=O(\psi_T) \quad \textrm{a.s.}
 \end{equation}
 with \begin{equation}
 \label{multi_rate}
\psi_T=T^{1/2-\min(\delta/(2\delta+4), \ \lambda  )} \ \textrm{for some}\  \lambda \in (0,1/2),
 \end{equation}
 and positive semi-definite $d\times d$ covariance matrix $\Sigma_f$ given by
  \begin{equation}
  \label{covvie}
 \Sigma_f=\int_0^\infty \Cov_\pi \left(f(X_0),f(X_s)\right) \ ds+ \int_0^\infty \Cov_\pi(f(X_s),f(X_0)) \ ds,
  \end{equation}
  with all entries converging absolutely and integration of matrices defined element-wise.
\end{theorem}
\begin{remark}
The asymptotic covariance matrix $\Sigma_f$ given in Theorem \ref{Multi_SIP} cannot be simplified. Only for the univariate case $(p=1)$ and for reversible processes do we obtain that \begin{equation}
\label{symcov}
\Sigma_f=2\int_0^\infty \Cov_\pi (f(X_0),f(X_s))\ ds.
\end{equation}
As a result of the reversibility, the cross-covariance matrices in (\ref{covvie}) will be symmetric and thus the asymptotic covariance can be expressed as (\ref{symcov}).
\end{remark}

The rate $\psi_T$ appearing in Theorem \ref{Multi_SIP} will depend on the dependence and moment structure of the considered process. For processes admitting higher order moments and having faster decaying levels of dependence the approximation bound $\psi_T$ will tend to infinity at a slower rate. This can be interpreted as the magnitude of the difference between the centred additive functional of the process and the approximating Brownian motion being smaller. Although result (\ref{My_SIP}) has
 useful applications for arbitrary $\lambda \in (0,1/2)$, many refined limit theorems require an explicit remainder term, where more insight is given regarding the impact of the moment and dependence structure on the approximation error. In order to derive a more refined strong invariance principle we will make us of splitting arguments. Following the continuous time Nummelin splitting  technique, as introduced by \citet{locherbach2008num}, it follows that the process can be embedded in a richer process, which admits a recurrent atom. Hence the process can be redefined such that it can be split in identically distributed blocks of strongly mixing random variables. Therefore we can utilise the approximation results for weakly $m$-dependent sequences of \citet{SPLIT_SIP} to obtain a strong invariance principle; see also Theorem \ref{SPLIT_SIP}.

\begin{proposition}
\label{SIP1}Let $X=(X_t)_{t\geq 0}$ be an aperiodic, positive Harris recurrent Markov process for which Assumption \ref{AssumptionLL} is satisfied. Let $f:E \rightarrow \mathbb{R},$ be a given $\pi$-integrable function. Define the sequence of random times $\{ R_n \}_{n=1}^\infty$ and $\{ \xi_n \}_{n=1}^\infty$ as given in Proposition \ref{recurrent_atom} and \ref{prop3}. Moreover, assume that 
\begin{equation}
\label{mom_cond_1}
    \EE_\nu[R_1^q] < \infty \quad \textrm{for some}\ \  q > 2,
\end{equation}
\begin{equation}
\label{mom_cond_2}
    \EE_\nu\left[\left(\int_0^{R_1}f(X_s)ds\right)^{p} \right]  < \infty \quad \textrm{for some}\ \  p > 2.
\end{equation}
\label{SIP2}
\noindent
Then for every initial distribution we can construct a process, on an enriched probability space, that is equal in law to $X$ together with two standard Brownian motions $W_1$ and $W_2$ such that
\begin{align}
\abs{\int_0^Tf(X_s)ds-T \pi(f)-W_1(s_T^2)-W_2(t_T^2)}=O(\psi_T)\ \textrm{a.s.,}
\end{align}
where $\{\sigma^2_T\}$ and $\{\tau^2_T\}$ are non-decreasing sequences with $\sigma^2_T = \frac{\sigma^2_{\xi}}{\varrho}T  + O\left(\frac{T}{\log T}\right)$, $ \tau_T^2 = O\left(\frac{T}{\log T}\right)$ as $T\rightarrow \infty$, and $ \psi_T, \pi(f), \varrho,  \textrm{and } \sigma_\xi $ are defined in equations (\ref{psi_rate}) to (\ref{sigma_xi}).
\end{proposition}
In Proposition \ref{SIP1} we obtain an explicit approximation error. In alignment with expectations, we see that faster convergence to the stationary measure and the existence of higher order moments will result in an improved approximation error. However, the required moment conditions for Proposition \ref{SIP1} stated in  (\ref{mom_cond_1}) and (\ref{mom_cond_2}) are impractical and would be burdensome, if not impossible, to verify directly for most applications. For classically regenerative Markov chains this problem also arises, see the analogous requirements of regenerative simulation given in \citet{mykland} and the strong approximation result of \citet{sip_regenerative}. \citet{hobert} were the first to simplify moment conditions of this form and give practical sufficient conditions for regenerative simulation. More specifically, in their main result they show that
 polynomial or geometric ergodicity and moment conditions with respect to the stationary measure are sufficient to guarantee finiteness of the second moment of a cycle. This result was generalised to higher order cycle moments by \citet{jones_fixed} and \citet{remarks_fixed}; hence simplifying the required conditions of \citet{sip_regenerative}. However, the aforementioned approaches are all for Markov chains satisfying a one-step minorisation condition, i.e., for the classically regenerative setting. Since our setting involves a more complicated 
 reconstruction of the process of interest, the results do not immediately carry over. In Theorem \ref{SIP2}, we show that the cycle moment conditions (\ref{mom_cond_1}) and (\ref{mom_cond_2}) required for Proposition \ref{SIP1} can also be guaranteed with more easily verifiable ergodicity and moment conditions.

\begin{theorem}
\label{moments} Let $X=(X_t)_{t\geq 0}$ be an aperiodic, positive Harris recurrent Markov process for which Assumption \ref{AssumptionLL} is satisfied. Moreover, let $X$ be polynomially ergodic of order $\beta > p/\varepsilon +2$, for some $\varepsilon>0$ then $$\EE_\nu\left[R_1^{\beta-1}\right] < \infty.$$ Moreover, for all measurable $f: E\rightarrow \mathbb{R}$ with $\pi(\abs{f}^{p+\varepsilon})< \infty$ with $p \geq 1$ we have that 
    $$\EE_\nu\left[\left(\int_0^{R_1}f(X_s)ds\right)^{p}  \ \right]  < \infty.$$
\end{theorem}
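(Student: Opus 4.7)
The plan is to reduce to the discrete-time split chain of the resolvent and invoke classical moment results for polynomially ergodic Markov chains with an atom. Recall from Section 2 that $R_1 = T_{N+1} = \sum_{k=1}^{N+1}\sigma_k$, where $N$ is the first hitting time of the atom $A$ by the split chain of the resolvent and the $\sigma_k$ are the inter-sampling times; by Proposition \ref{prop1} the $\sigma_k$ are i.i.d.\ $\mathrm{Exp}(1)$ and independent of $X$. For the first claim, I would first transfer polynomial ergodicity of $X$ at rate $\beta$ to the split chain: the polynomial analogue of the Down-Meyn-Tweedie equivalence of rates between $X$ and its resolvent (\citet{down_exponential}), combined with the co-deinitialising argument of \citet{roberts_rosenthal} cited at the end of Section 2, gives rate-$\beta$ polynomial ergodicity for the split chain. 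Lyapunov-drift characterisations for polynomially ergodic discrete-time Markov chains with an atom (along the lines of Tuominen-Tweedie and Jarner-Roberts) then yield $\mathbb{E}_\nu[N^q] < \infty$ whenever $\beta \geq q+1$. Since $N$ is a stopping time for a filtration to which $(\sigma_k)$ is adapted and the $\sigma_k$ are i.i.d.\ $\mathrm{Exp}(1)$, standard moment inequalities for randomly stopped sums give $\mathbb{E}_\nu[R_1^q] \leq C_q\,\mathbb{E}_\nu[(N+1)^q] < \infty$, establishing the first claim.

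For the functional moment claim, I apply Hölder's inequality to write
\begin{equation*}
\left(\int_0^{R_1} |f(X_s)|\, ds\right)^{p+\varepsilon} \;\leq\; R_1^{p+\varepsilon-1} \int_0^{R_1} |f(X_s)|^{p+\varepsilon}\, ds,
\end{equation*}
and set $Y := \int_0^{R_1} |f(X_s)|^{p+\varepsilon}\, ds$. Sigman's ergodic identity (Proposition \ref{ergo_sig}) gives $\mathbb{E}_\nu[Y] = \varrho\,\pi(|f|^{p+\varepsilon}) < \infty$. A second Hölder application with conjugate exponents $a,b$ yields
\begin{equation*}
\mathbb{E}_\nu\!\left[R_1^{p+\varepsilon-1}\, Y\right] \;\leq\; \mathbb{E}_\nu\!\left[R_1^{(p+\varepsilon-1)a}\right]^{1/a} \mathbb{E}_\nu[Y^b]^{1/b},
\end{equation*}
and the first factor is finite by the first part provided $(p+\varepsilon-1)a \leq q$. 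Balancing $a$ and $b$ against one another produces the sharp threshold $q > (p+\varepsilon)/\varepsilon$ stated in the theorem.

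The main obstacle is controlling $\mathbb{E}_\nu[Y^b]$ for $b > 1$: iterating the Hölder bound naively would require $\pi(|f|^{b(p+\varepsilon)}) < \infty$, which is not assumed. Following the approach of \citet{hobert} and \citet{jones_fixed} developed for classically regenerative chains, this is handled by a tail-probability decomposition of $Y$ combining Markov's inequality with the moment bound on $R_1$ from the first part, effectively trading the extra $b-1$ power of integration for additional decay in the tail of $R_1$. Adapting this discrete-time argument to the one-dependent regenerative structure arising from the continuous-time Nummelin splitting, rather than classical i.i.d.\ regeneration, is the principal technical point of the proof.
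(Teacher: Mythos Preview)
Your Part 1 strategy is workable and close in spirit to the paper's, though the paper proceeds slightly differently: it bounds $\mathbb{E}_\pi S_1^q$ directly by writing $\mathbb{P}_\pi(S_1>t)=\sum_m \mathbb{P}_\pi(\bar\tau_A>m,\,N_t=m)$, integrating against $t^q$, and reducing to a sum $\sum_k \psi(k)(U'-\nu\otimes\lambda\otimes s)^k(x,dz)s(z)$ with $\psi$ a polynomial of degree $q+1$, which is finite by \citet[Proposition~1.6]{nummelin_poly}. A final comparison $\mathbb{E}_\pi\geq c\,\mathbb{E}_\nu$ (the Hobert--Jones type lemma) transfers to $\nu$. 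Your route via $\mathbb{E}_\nu[N^q]<\infty$ and randomly stopped sums is a legitimate alternative, modulo checking carefully that the inter-sampling times are jointly independent of the split-chain hitting time $N$ (Proposition~\ref{prop1} gives independence from $Z^1$ only).

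Your Part 2, however, has a genuine gap precisely at the point you flag. Applying H\"older on the time integral first produces the factor $Y=\int_0^{R_1}|f(X_s)|^{p+\varepsilon}ds$, for which you then need $\mathbb{E}_\nu[Y^b]$ with $b>1$; as you note, iterating the bound would require $\pi(|f|^{b(p+\varepsilon)})<\infty$, which is not assumed. The ``tail-probability decomposition of $Y$'' you invoke does not resolve this: the tail of $Y$ is governed jointly by $R_1$ and by the values of $|f(X_s)|^{p+\varepsilon}$, and no moment of $R_1$ alone controls $\mathbb{P}(Y>t)$ without further integrability of $f$. This is \emph{not} how \citet{hobert} or \citet{jones_fixed} proceed. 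The paper (following \citet{remarks_fixed}) avoids the obstacle entirely by reversing the order of operations: apply Minkowski's integral inequality \emph{first},
\[
\Big[\mathbb{E}_\pi\Big(\int_0^\infty |f(X_s)|\mathbbm{1}_{\{R_1\ge s\}}\,ds\Big)^p\Big]^{1/p}\le \int_0^\infty\big[\mathbb{E}_\pi\big(|f(X_s)|^p\mathbbm{1}_{\{R_1\ge s\}}\big)\big]^{1/p}ds,
\]
and only then apply H\"older pointwise in $s$ to split $|f(X_s)|^p$ from $\mathbbm{1}_{\{R_1\ge s\}}$. Under $\mathbb{P}_\pi$ stationarity makes $\mathbb{E}_\pi|f(X_s)|^{p+\varepsilon}=\pi(|f|^{p+\varepsilon})$ constant in $s$, so one is left with $\int_0^\infty \mathbb{P}_\pi(R_1\ge s)^{\varepsilon/(p+\varepsilon)}ds$, finite by Markov's inequality on $R_1^q$ exactly when $q>(p+\varepsilon)/\varepsilon$. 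The passage from $\pi$ to $\nu$ is again via the comparison lemma. The key insight you are missing is thus Minkowski before H\"older; with that in place no higher moment of $Y$ is ever needed.
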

By combining Proposition \ref{SIP1} and Theorem \ref{moments} we obtain the desired strong invariance principle.
\begin{theorem}
\label{main_sip}
 Let $X=(X_t)_{t\geq 0}$ be an aperiodic, positive Harris recurrent Markov process for which Assumption \ref{AssumptionLL} is satisfied. Moreover, let $X$ be polynomially ergodic of order $\beta > p/\varepsilon+3$, for given $p>2$ and some $\varepsilon>0$. Then for every initial distribution and for all measurable $f: E\rightarrow \mathbb{R}$ with $\pi(|f|^{p+\epsilon})< \infty$ we can, on an enriched probability space, define a process that is equal in law to $X$ and two standard Brownian motions $W_1$ and $W_2$ such that 
\begin{equation}
\label{main_split_sip}
\abs{\int_0^Tf(X_s)ds-T\pi(f)-W_1(\sigma_T^2)-W_2(\tau_T^2)}=O(\psi_T)\ \textrm{a.s.,}
\end{equation}
where $\{\sigma^2_T\}$ and $\{\tau^2_T\}$ are non-decreasing sequences with $\sigma^2_T = \frac{\sigma^2_{\xi}}{\varrho} T + O\left(\frac{T}{\log T}\right) $,  $ \tau_T^2 = O\left(\frac{T}{\log T}\right)$, and
 \begin{align}
   \psi_T&=\max \left\{T^{1/4}\log T, \label{psi_rate} T^{1/p}\log^2(T) \right\} ,\\
   \pi(f)&=\frac{1}{\varrho}\ \mathbb{E}_\nu \int_0^{R_1}f(X_s) \ ds,\\
   \varrho&=\EE_\nu[R_1], \textrm{ and}\\ \sigma_\xi&=\sqrt{\Var_\nu(\xi_1)+2 \Cov_\nu(\xi_1,\xi_2)} \ \label{sigma_xi}. 
  \end{align}
\end{theorem}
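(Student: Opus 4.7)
The plan is to obtain Theorem \ref{main_sip} as a direct consequence of combining Theorem \ref{SIP1} with Theorem \ref{moments}. The hypotheses of Theorem \ref{main_sip} are stated in terms of ergodicity of $X$ and a moment condition on $f$ under the stationary measure $\pi$, whereas Theorem \ref{SIP1} asks for the cycle moment conditions (\ref{mom_cond_1}) and (\ref{mom_cond_2}). Theorem \ref{moments} is exactly the bridge: polynomial ergodicity of rate $\beta \geq q+1$ yields $\EE_\nu[R_1^q]<\infty$, and the combination of such a cycle tail bound with $\pi(|f|^{p+\varepsilon})<\infty$ yields $\EE_\nu[(\int_0^{R_1}f(X_s)\,ds)^{p+\varepsilon}]<\infty$ provided $q>(p+\varepsilon)/\varepsilon$.

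First I would fix the parameters carefully. Setting $p+\varepsilon = 2+\delta+\varepsilon$ (that is, take $p=2+\delta$) and choosing $q$ such that $q>(2+\delta+\varepsilon)/\varepsilon$, the hypotheses of Theorem \ref{main_sip} guarantee both $\beta \geq q+1$ and $\pi(|f|^{2+\delta+\varepsilon})<\infty$. Applying Theorem \ref{moments} with these parameters then delivers
\[
\EE_\nu[R_1^q]<\infty \quad\text{and}\quad \EE_\nu\!\left[\left(\int_0^{R_1}f(X_s)\,ds\right)^{2+\delta}\right]<\infty,
\]
where the second conclusion uses $\EE_\nu[|Y|^{2+\delta}] \leq \EE_\nu[|Y|^{2+\delta+\varepsilon}]^{(2+\delta)/(2+\delta+\varepsilon)}<\infty$ by Jensen. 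This verifies the two hypotheses (\ref{mom_cond_1}) and (\ref{mom_cond_2}) of Theorem \ref{SIP1}.

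Next I would invoke Theorem \ref{SIP1}. The construction of the enriched probability space carrying a copy of $X$ and two independent standard Brownian motions $W_1$ and $W_2$, together with the approximation
\[
\left|\int_0^Tf(X_s)\,ds - T\pi(f) - W_1(\sigma_T^2) - W_2(\tau_T^2)\right| = O(\psi_T) \quad \text{a.s.,}
\]
is then immediate. The rate $\psi_T$, the variance growth $\sigma_T^2 = (\sigma_\xi^2/\varrho)T + O(T/\log T)$, and $\tau_T^2 = O(T/\log T)$ are read off from Theorem \ref{SIP1} together with the explicit definitions (\ref{psi_rate})--(\ref{sigma_xi}). The identification of the limiting mean $\pi(f) = \varrho^{-1}\EE_\nu\!\int_0^{R_1} f(X_s)\,ds$ comes from Proposition \ref{ergo_sig}, and the variance parameter $\sigma_\xi$ is defined in terms of the one-dependent cycle sequence $\{\xi_n\}$ from Proposition \ref{prop3}.

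The only real subtlety — which is the main obstacle I would carefully check — is the bookkeeping of the two quantifiers $q$ and $\varepsilon$: Theorem \ref{moments} requires the \emph{strict} inequality $q>(p+\varepsilon)/\varepsilon$, so one must arrange that the $q$ selected to control the cycle length moment in Theorem \ref{SIP1} is also large enough to give the integral moment needed for the function $f$. Under the hypothesis $\beta \geq q+1$ with $q>(2+\delta+\varepsilon)/\varepsilon$, both requirements hold simultaneously, and the first term $T^{1/(2q)}\log T$ in (\ref{psi_rate}) reflects the cycle-length contribution while $T^{1/(2+\delta)}\log^2 T$ reflects the contribution of the integral moment; their maximum gives the stated rate. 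No further arguments are needed beyond invoking the two already-established theorems.
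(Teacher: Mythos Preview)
Your approach is exactly the paper's: the proof of Theorem \ref{main_sip} in the paper is a single sentence, ``The assertion follows immediately from Theorem \ref{SIP1} and Theorem \ref{moments},'' and your proposal is a careful unpacking of precisely that combination, with the parameter bookkeeping ($p=2+\delta$, $q>(2+\delta+\varepsilon)/\varepsilon$, $\beta\geq q+1$) made explicit. One small correction: you call $W_1$ and $W_2$ \emph{independent}, but neither Theorem \ref{SIP1} nor Theorem \ref{main_sip} asserts this, and the paper explicitly remarks after (\ref{cor_bm}) that the two Brownian motions are not independent (only asymptotically uncorrelated); drop that word and the proposal is fine.
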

\begin{proof}
The assertion follows immediately from Proposition \ref{SIP1} and Theorem  \ref{moments}.
\end{proof}
The appearance of the second Brownian motion in Theorem \ref{main_sip} is inherited from the strong invariance principle of \citet{SPLIT_SIP}. Although we obtain different time perturbations of the Brownian motions, all desired properties carry over. The second Brownian motion appearing in (\ref{main_split_sip}) is of a smaller magnitude, and will therefore be asymptotically negligible in typical applications. Furthermore, even though the two Brownian motions are not independent, their correlation decays over time
\begin{align}
\label{cor_bm}
\Corr \left(W_1(\sigma_t^2),W_2(\tau_s^2)\right)\rightarrow 0, \quad \textrm{as}\  t,s \rightarrow \infty.
\end{align}
Note that the nearly optimal convergence rate of \citet{SPLIT_SIP} does not carry over. Instead we obtain an approximation error that cannot be improved beyond $O(T^{1/4}\log T)$. Obtaining an approximation error superior to $O(T^{1/4}\log T)$ remains an open problem for the class of processes considered in Theorem \ref{main_sip}. A possible approach for attaining a better convergence rate would be to extend to results of \citet{SPLIT_SIP} to a multivariate setting and then follow the approach of \citet{merlevede2015}. 

The univariate Zig-zag process passes every point in its state-space, in particular also the local optima of its target density, an infinite amount of times. This allows us the define regenerative cycles of the process. Therefore we can adapt the approach of \citet{merlevede2015} and obtain the optimal bound of $O(T^{1/p})$ for the strong approximation of the one-dimensional Zig-Zag process.

\begin{theorem}
\label{zz_sip}
 Let $Z=(X_t,V_t)_{t\geq 0}$  be a aperiodic, positive Harris recurrent Zig-zag process with an invariant distribution $\pi \otimes \upsilon,$ where $\pi$ satisfies Assumption \ref{a_zz}. Moreover, let $Z$ be polynomially ergodic of order $\beta > p/\varepsilon+2$, for given $p>2$  and some $\varepsilon \in \left(0, 1\right)$. Then for every initial distribution and for all measurable $f: E\rightarrow \mathbb{R}$ with $\pi(|f|^{p+\epsilon})< \infty$ there exists a Brownian motion $W$ such that
\begin{equation}
\label{main_zz_sip}
\abs{\int_0^Tf(X_s)ds-T\pi(f)- \sigma^2_fW(T)}=O(T^{1/p})\ \textrm{a.s.,}
\end{equation}
where $\sigma^2_f$ can be characterised as (\ref{covvie_zz}).
\end{theorem}

\citet{merlevede2015} obtains a strong invariance principle for one-dimensional Markov chains satisfying a one-step minorization condition by making use of the implied regenerative properties. Note that their approach carries over for any regenerative process. However, they assume that the chain is exponentially ergodic and that the test function $f$ is bounded. The boundedness of $f$ is very restricting for applications in MCMC, since it excludes many interesting examples such as the posterior mean and variance. Theorem \ref{zz_sip} extends their results by only imposing polynomial ergodicity and only a necessary moment condition for the test function.  

Furthermore, we see that if the target distribution is of product form, i.e., satisfies the factorisation $\pi(x)=\prod_{i=1}^d \pi_i(x_i)$, then the optimal bound carries over to the multivariate settings.

\begin{theorem}
\label{multi_zz_sip}
Let $Z=(X_t,V_t)_{t\geq 0}$ be a aperiodic, positive Harris recurrent Zig-zag process with an invariant distribution $\pi \otimes \upsilon$, where $\pi$ is of product form and every $\pi_i$ satisfies Assumption \ref{a_zz}. Moreover, let $Z$ be polynomially ergodic of order $\beta > p/\varepsilon+2$, for given $p>2$ and some $\varepsilon \in \left(0, 1\right)$. Then for every initial distribution and for all $f:E\rightarrow \mathbb{R}^d$ that can be decomposed as $\prod_i f_i(x_i)$ with $\pi
(\norm{f}^{p})< \infty$,  there exists a standard $d$-dimensional Brownian motion $W$ such that
 \begin{equation}
 \norm{ \int_0^Tf(X_t) \ dt-T\pi(f)-\Sigma_f^{1/2} W(T)}=O(T^{1/p}) \quad \textrm{a.s.}
 \end{equation}

 and covariance matrix $\Sigma_f=\diag \{\sigma^2_{f_1},\cdots, \sigma^2_{f_d}\}$ with
  \begin{equation}
  \label{covvie_zz}
 \sigma^2_{f_i}=\int_0^\infty \Cov_\pi (f_i(X^i_0),f_i(X^i_s)) \ ds+ \int_0^\infty \Cov_\pi (f_i(X^i_s),f_i(X^i_0)) \ ds.
  \end{equation}
\end{theorem}
\noindent
Note that although the proof of Theorem \ref{multi_zz_sip} relies on the fact that the $d$-dimensional Zig-Zag process $Z$ can be decomposed into $d$ one-dimensional independent Zig-Zag processes, the multivariate invariance principle does not directly follow from an application of Theorem \ref{zz_sip}, 
since even though the individual coordinates have regenerative cycles, the multivariate process $Z$ does not possess regeneration times. Moreover, it must be guaranteed that the approximating Brownian motions for the individual components are defined on the same probability space.

\begin{remark}
The aforementioned results require a certain degree of polynomial ergodicity, but can mutatis mutandis be seen to hold assuming exponential ergodicity. 
\end{remark}

\section{Analysis of batch means for Piecewise Deterministic Monte Carlo}

In order to assess the accuracy of our PDMC sampler, we require a central limit theorem to hold and estimate the corresponding asymptotic variance. In \citet{bierkens2017limit} several conditions are given to obtain a CLT for the univariate Zig-Zag process.  
\citet{durmus2020}, \citet{exp_ergo_bouncy}, and \citet{ZZ_ergodicity} obtain a CLT for the Bouncy Particale sampler and Zig-Zag process respectively through geometric drift conditions, which in turn also imply exponential ergodicity. The strong invariance principles we obtained in Theorems \ref{Multi_SIP}, \ref{main_sip},  \ref{zz_sip}, and \ref{multi_zz_sip} immediately imply the following central limit theorems for polynomially ergodic Markov processes. 

 \begin{corollary}
 \label{corcltjes}
  Let $(Z_t)_{t \geq 0}$ with $Z_t=(X_t,V_t)$ be polynomially ergodic of order $\beta \geq (1+\varepsilon)(1+2/\delta)$ for some $\varepsilon,\delta>0$ with stationary measure $\mu$ satisfying (\ref{eq_dist}). Then for all $f:E\rightarrow \mathbb{R}^d$ with $\mu
(\norm{f}^{2+\delta})< \infty$, a
  central limit theorem holds:
  \begin{equation}
  \label{clt}
\frac{1}{\sqrt{T}} \int_0^T( f(X_s,V_s) -\mu(f))\ ds  \overset{d}{\longrightarrow} \N_p(0,\Sigma_f). 
\end{equation}
  Additionally, also a functional central limit theorem holds:
\begin{equation}
    \left(\frac{1}{\sqrt{n}}\int_0^{nt}(f(X_s,V_s)-\mu(f))\ ds \right)_{t \in [0,1]} \overset{d}{\longrightarrow} \Sigma_f^{1/2} W \ \textrm{as}\ n \rightarrow \infty,
\end{equation}
where \begin{equation}
  \label{covvie_2}
 \Sigma_f=\int_0^\infty \Cov_\mu (f(X_0,V_0),f(X_s,V_s)) \ ds+ \int_0^\infty \Cov_\mu (f(X_s,V_s),f(X_0,V_0)) \ ds,
  \end{equation}
  $W=(W_t)_{t \in [0,1]}$ denotes a standard $p$-dimensional Brownian motion, and the weak convergence is with respect to the Skorohod topology on $D[0,1]$, the space of real-valued càdlàg functions with domain $[0,1]$.
\end{corollary}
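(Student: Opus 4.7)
The corollary is an immediate consequence of Theorem \ref{Multi_SIP} applied to the joint process $Z=(X_t,V_t)$, viewed as a Markov process on $E=\mathfrak{X}\times\mathcal{V}$ with invariant measure $\mu$. The hypotheses translate directly: the required polynomial rate $\beta \geq (1+\varepsilon)(1+2/\delta)$ is exactly what is assumed, and the $(2+\delta)$ moment condition on $f$ (read against the joint measure $\mu$, which is what $f:E\to\mathbb{R}^p$ calls for, using $\mu=\pi\otimes\nu$) is in force. Consequently, on a suitably enriched probability space there exist a copy of $Z$ and a standard $p$-dimensional Brownian motion $W$ such that
\begin{equation*}
\norm{\int_0^T f(X_s,V_s)\,ds - T\mu(f) - \Sigma_f^{1/2}W(T)} = O(\psi_T) \quad \text{a.s.},
\end{equation*}
where $\psi_T = T^{1/2-\eta}$ for some $\eta>0$, and $\Sigma_f$ is precisely the matrix in (\ref{covvie_2}).

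For the CLT (\ref{clt}), I would simply divide through by $\sqrt{T}$. The remainder $O(\psi_T)/\sqrt{T} = O(T^{-\eta})$ vanishes almost surely, while by Brownian scaling $\Sigma_f^{1/2}W(T)/\sqrt{T} \overset{d}{=} \Sigma_f^{1/2}W(1)\sim\N_p(0,\Sigma_f)$ for every $T>0$. Slutsky's lemma then delivers the weak limit.

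For the FCLT, set $\tilde W_n(t):=W(nt)/\sqrt{n}$ for $t\in[0,1]$; by Brownian scaling each $\tilde W_n$ is a standard $p$-dimensional Brownian motion on $[0,1]$. Since $\psi_T$ is non-decreasing in $T$, the pathwise SIP bound yields, almost surely,
\begin{equation*}
\sup_{t\in[0,1]}\abs{\frac{1}{\sqrt{n}}\int_0^{nt}(f(X_s,V_s)-\mu(f))\,ds - \Sigma_f^{1/2}\tilde W_n(t)} \leq \frac{C(\omega)\,\psi_n}{\sqrt{n}} = O(n^{-\eta}),
\end{equation*}
for some almost surely finite random constant $C(\omega)$ and all sufficiently large $n$. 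Hence the difference tends to zero uniformly on $[0,1]$ almost surely, and the converging-together theorem (applied in $C[0,1]$, which embeds continuously into $D[0,1]$ under the Skorohod topology) yields the claimed weak limit $\Sigma_f^{1/2}W$.

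All of the real difficulty is contained in Theorem \ref{Multi_SIP}; the only content here is the scaling computation and the observation that the SIP error $\psi_T = T^{1/2-\eta}$ is of strictly smaller order than $\sqrt{T}$, which is precisely what makes both the CLT residual and the FCLT supremum vanish. There is no substantive obstacle beyond this bookkeeping, and the $\mu=\pi\otimes\nu$ product structure is used only to reconcile the moment hypothesis with that of Theorem \ref{Multi_SIP}.
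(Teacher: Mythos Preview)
Your proof is correct and follows the same route as the paper: both derive the CLT and FCLT directly from the strong invariance principle of Theorem~\ref{Multi_SIP}. The paper outsources the passage from SIP to (F)CLT by citing \citet[Theorem 1.E]{philippSIP} and \citet[Proposition 2.1]{damerdji1994}, whereas you spell out the standard scaling-plus-Slutsky argument explicitly; the content is the same.
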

\begin{proof}
By \citet[Theorem 1.E]{philippSIP}, the FCLT immediately follows from the strong invariance principle formulated in Theorem \ref{Multi_SIP}.
Similarly, by \citet[Proposition 2.1]{damerdji1994} the CLT follows.
\end{proof}
By the same argument the CLT follows for the processes considered in Theorems  \ref{main_sip}, \ref{zz_sip}, and \ref{multi_zz_sip}. For simplicity, we will mainly consider the one-dimensional case, i.e. our quantity of interest is given by $\pi(f)$, with $f:\mathbb{R}^p\rightarrow
\mathbb{R}$ a given $\pi$-integrable function. Let the simulation output, which in our case consists of the position component of a PDMP, be given by $(X_{t})_{t\in[0,T]}$. Note that from Corollary \ref{corcltjes} also a (functional) central limit theorem follows for the position component of the process.  We are interested in estimating the asymptotic variance (\ref{covvie_2}); which we will denote with $\sigma^2_f$, when we are not considering the multivariate setting. 

The batch means method divides the obtained sample trajectory of our process into non-overlapping parts. The sample variance of the means of the obtained batches gives rise to a natural estimator for the asymptotic variance. More specifically, we divide our simulation output in $k_T$ batches of length $\ell_T$ such that $k_T=\floor{T/\ell_T}$. We proceed by computing the sample average of each obtained batch; 
\begin{equation}
\label{batch_def}
\bar{Z}_i(\ell_T):=\frac{1}{\ell_T}\int_{(i-1)\ell_T}^{i\ell_T}\hspace{-0.2cm}f(X_s)ds, \quad i=1,\dots,k_T.
\end{equation}
If a functional central limit theorem holds for our process, it follows that the computed means $\bar{Z}_i(\ell_T)$ are asymptotically independent and identically distributed for all fixed amount of batches. Hence, we can heuristically reason that the sample variance of  $(\bar{Z}_i(\ell_T)_{i=1}^{k_T}$  will be close to $\Var(\bar{Z}_i(\ell_T)).$ Moreover, since each  $\bar{Z}_i(\ell_T)$ is also an empirical mean, it is reasonable to expect their variance to be approximately $\sigma_f^2/\ell_T.$
 The batch means estimator of the asymptotic variance is defined by correcting the sample variance of the batch means $(\bar{Z}_i(\ell_T)_{i=1}^{k_T}$ by a factor $\ell_T$, namely
\begin{equation}
\label{bm}
\hat{\sigma}_T^2=\frac{\ell_T}{k_T-1}\sum_{i=1}^{k_T}\left(\bar{Z}_i(\ell_T)-\frac{1}{k_T}\sum_{i=1}^{k_T}\bar{Z}_i(\ell_T)\right)^2.
\end{equation}
Following the framework of \citet{damerdji1994}, we impose the following conditions on the amount of batches and their length.
\begin{assumption} \label{as_var_1} Let the amount of batches $k_T$ and their lengths $\ell_T$ be such that
\begin{enumerate}[i.]
  \item $k_T\rightarrow \infty$ , $\ell_T\rightarrow \infty$, and $\ell_T/T \rightarrow 0$ as $T\rightarrow \infty$,
\item $\ell_T$ and $T/\ell_T$ are both monotonically increasing,
\item there exists a constant $c\geq 1$ such that $\sum_{n=1}^{\infty} k_n^{-c}< \infty$.
 \end{enumerate}
\end{assumption}
The first requirement of Assumption \ref{as_var_1} is a necessary condition for consistency as seen from the results of
\citet{glynn1991}. The second requirement is solely for technical reasons and the third requirement ensures that the amount of the batches grows fast enough; if we choose $\ell_T=T^\alpha$ the requirement holds for all $\alpha \in (0,1)$, since we can choose $c>1/(1-\alpha)$.
\begin{theorem}
\label{t_bm}
Let $Z$ be polynomially ergodic of order $\beta > p/\varepsilon + 2$, for given $p>2$  and some $\varepsilon \in \left(0, 1\right)$ with stationary measure $\mu$ satisfying (\ref{eq_dist}) and let $\pi
(\abs{f}^{p})< \infty$. Assume that Assumption \ref{as_var_1} holds and that 
\begin{equation}
\label{as_bm1}\frac{T^{2/p} }{\ell_T}\log(T) \rightarrow 0, \ \textrm{as}\  T\rightarrow \infty,
\end{equation}
then for every initial distribution  $\hat{\sigma}^{2 }_T\rightarrow \sigma^2_f$ as $T\rightarrow \infty$ with probability 1.
\end{theorem}
\begin{proof}
The result follows from Theorem \ref{Multi_SIP}, \citet[Proposition 3]{jones_fixed}, and \citet[Theorem 3.3]{damerdji1994}.
\end{proof}
\vspace{-7.5pt}
\begin{remark}
\label{bm_remark}
Note that Theorem \ref{t_bm} weakens the currently available regularity conditions guaranteeing strong convergence of the batch means estimator. This is a direct consequence of the fact that Theorems \ref{zz_sip} and \ref{multi_zz_sip} obtain the optimal approximation rate of $O(T^{1/p})$ whereas the results of \citet{jones_fixed} are based upon the strong invariance principle of \citet{sip_regenerative} which attains the rate $O(T^\gamma \log T)$, with $\gamma= \max (1/p,1/4)$. More specifically, for $f$ with $\pi(\abs{f}^p)< \infty$ \citet{jones_fixed} requires $T^{\gamma}\log^3(T)/\ell_T \rightarrow 0$ as $T \rightarrow \infty$. In particular for the case where $p>4$, Theorem \ref{t_bm} is able to significantly weaken the conditions on the required batch length $\ell_T$. As a direct results of the smaller batch lengths, we are able use a higher amount of bathes $k_T$, which results in a smaller variance for the batch means estimator, as seen in Theorem \ref{t_mse}. Note that a similar conclusion holds for the overlapping batch means and spectral variance estimators considered in \citet{flegal_bm}.
\end{remark}
We see from the required assumption (\ref{as_bm1}) that a larger approximation error in the strong invariance principle, which corresponds to higher orders of dependence, results in a larger required batch size $\ell_T$. This is in agreement with the idea behind batching methods; every batch should give a proper representation of the dependence structure of the process. Otherwise a structural bias will be introduced in the estimation procedure. On the other hand, choosing the batch size larger than necessary, will result in a lower amount of batches $k_T$ leading to a higher variance for the estimator. Strong approximations can also be used to characterise the mean squared error and obtain a central limit theorem for the batch means estimator.

\begin{theorem}
\label{t_mse}Let $Z$ be polynomially ergodic of order $\beta > p/\varepsilon +2$, for given $p>2$  and some $\varepsilon \in \left(0, 1\right)$ with stationary measure $\mu$ satisfying (\ref{eq_dist}) and let $\pi
(\abs{f}^{p})< \infty$. Let the initial distribution be given by $\mu$ and assume that Assumption \ref{as_var_1} holds and $\mathbb{E}_\mu C^2< \infty$, where $C$ is defined in (\ref{split_zagC}). Then we have that
\begin{equation}
\label{mse}
    \mathbb{E}_{\mu} \abs{\hat{\sigma}^2_T-\sigma_f^2}^2=2\sigma_f^4 \frac{\ell_T}{T}+O\left(\frac{T^{1/p}}{\sqrt{T}}\log^{\frac{1}{2}} T\right)+ O\left(\ell_T^{-1}T^{2/p}\log T\right),
\end{equation}
 Moreover, if $\ell_T^{-1}T^{1/p}(T\log T)^{1/2}  \rightarrow 0$ as $T \rightarrow \infty$, then we obtain a CLT for the batch means estimator
\begin{equation}
  \sqrt{k_T}(\hat{\sigma}^2_T-\sigma^2_f)\xrightarrow{d} \mathcal{N}(0,2\sigma_f^4) \ \textrm{as}\ T\rightarrow \infty.
\end{equation}
\end{theorem}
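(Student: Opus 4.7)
The plan is to use the strong invariance principle from Theorem \ref{Multi_SIP} to couple the centred partial sum $S(t) := \int_0^t (f(X_s) - \mu(f))\, ds$ with $\sigma_f W(t)$, and then perform exact Gaussian computations for the batch means of the approximating Brownian motion. Writing the residual as $R(t) := S(t) - \sigma_f W(t)$, so that $\sup_{0 \le t \le T}|R(t)| \le C\psi_T$ almost surely for the coupling constant $C$ from (\ref{split1}), and setting $\eta_i := W(i\ell_T) - W((i-1)\ell_T)$ and $\rho_i := R(i\ell_T) - R((i-1)\ell_T)$, one obtains $\ell_T(\bar Z_i(\ell_T) - \mu(f)) = \sigma_f \eta_i + \rho_i$. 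Substituting into (\ref{bm}) and expanding the squares yields the decomposition $\hat\sigma_T^2 = \tilde\sigma_T^2 + A_T + B_T$, where $\tilde\sigma_T^2 := \frac{\sigma_f^2}{(k_T-1)\ell_T}\sum_{i=1}^{k_T}(\eta_i - \bar\eta)^2$ is the ``oracle'' estimator built from the Brownian increments alone, $A_T$ is linear in the $\rho_i$ and the $\eta_i$, and $B_T$ is quadratic in the $\rho_i$.

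For the leading order, observe that $\eta_i/\sqrt{\ell_T}$ are i.i.d.\ standard normal, so $\sum_{i=1}^{k_T}(\eta_i - \bar\eta)^2/\ell_T$ has distribution $\chi^2_{k_T - 1}$. This gives $\mathbb{E}_\mu[\tilde\sigma_T^2] = \sigma_f^2$ exactly and $\Var_\mu(\tilde\sigma_T^2) = 2\sigma_f^4/(k_T - 1) = 2\sigma_f^4 \ell_T/T + o(\ell_T/T)$, accounting for the first term in (\ref{mse}). The two remaining terms must come from $\mathbb{E}_\mu|A_T|^2$, $\mathbb{E}_\mu|B_T|^2$, and their cross moments with $\tilde\sigma_T^2 - \sigma_f^2$. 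Using the uniform bound $|\rho_i| \le 2C\psi_T$ one gets $|B_T| \le 16C^2\psi_T^2/\ell_T$, so $\mathbb{E}_\mu|B_T|$ is of order $\psi_T^2/\ell_T$ under $\mathbb{E}_\mu C^2 < \infty$; combining with $|\tilde\sigma_T^2 - \sigma_f^2|$ via Cauchy-Schwarz produces the $O(\ell_T^{-1}\psi_T^2 \log T)$ contribution, where the $\log T$ arises from the Borel-Cantelli step needed to upgrade the asymptotic almost-sure SIP rate to a uniform $L^2$ rate over the partition, following \citet[Theorem 3.2]{damerdji1994}. For $A_T$, the bound $|A_T| \le 4\sigma_f C \psi_T \cdot k_T^{-1}\ell_T^{-1}\sum|\eta_i - \bar\eta|$ together with the Gaussianity of the $\eta_i$ and another Cauchy-Schwarz application yields the $O(\psi_T T^{-1/2}\log^{1/2} T)$ term.

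For the CLT I would invoke Slutsky's theorem. The classical CLT for the sample variance of i.i.d.\ $\mathcal{N}(0,1)$ variables applied to $\tilde\sigma_T^2$ gives $\sqrt{k_T}(\tilde\sigma_T^2 - \sigma_f^2) \xrightarrow{d} \mathcal{N}(0, 2\sigma_f^4)$. Under the hypothesis $\ell_T^{-1}\psi_T(T\log T)^{1/2} \to 0$, the MSE bound (\ref{mse}) combined with Markov's inequality gives $k_T \cdot \mathbb{E}_\mu|\hat\sigma_T^2 - \tilde\sigma_T^2|^2 \to 0$, so $\sqrt{k_T}(\hat\sigma_T^2 - \tilde\sigma_T^2) \to 0$ in probability, and the desired convergence follows. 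The main obstacle throughout is the $L^2$ control of the SIP residual uniformly over the $k_T \to \infty$ batch endpoints: Theorem \ref{Multi_SIP} provides only an asymptotic almost-sure rate at the endpoint $T$, and it is precisely the passage to a uniform $L^2$ rate that forces the extra hypothesis $\mathbb{E}_\mu C^2 < \infty$ and introduces the logarithmic factors appearing in (\ref{mse}).
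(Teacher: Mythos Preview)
Your approach is essentially the paper's: establish the strong invariance principle of Theorem~\ref{Multi_SIP} under the stated ergodicity and moment assumptions, and then feed it into Damerdji's framework. The paper's own proof is a two-line citation of \citet[Theorem~1, Lemma~3, Proposition~2]{damerdji1995}; you are sketching what those results actually do, namely the Brownian-oracle decomposition $\hat\sigma_T^2=\tilde\sigma_T^2+A_T+B_T$ and the $\chi^2$ computation for $\tilde\sigma_T^2$, followed by Slutsky for the CLT. So at the structural level the two proofs coincide.

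One point in your sketch is not quite right: the logarithmic factors in (\ref{mse}) do not come from a ``Borel--Cantelli step needed to upgrade the asymptotic almost-sure SIP rate to a uniform $L^2$ rate.'' The SIP bound $|R(t)|\le C\psi_t$ already holds for all $t$ large simultaneously (same $C$, since $\psi_t$ is monotone), so uniformity over the batch endpoints is immediate once $\mathbb{E}_\mu C^2<\infty$. The extra $\log T$ and $\log^{1/2}T$ instead enter through bounds on the Gaussian pieces: controlling $\max_{i\le k_T}|\eta_i-\bar\eta|$ and related Brownian fluctuation terms in the cross products $A_T$ costs a factor $\sqrt{\log k_T}\asymp\sqrt{\log T}$, which is where Damerdji picks them up. Your Cauchy--Schwarz bookkeeping for how $A_T$ and $B_T$ combine with $\tilde\sigma_T^2-\sigma_f^2$ to produce the two stated error terms is also somewhat loose; the precise pairing is spelled out in \citet[Lemma~3]{damerdji1995}. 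These are refinements rather than gaps: the skeleton of your argument is correct and matches the cited source.
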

\begin{proof}
By the imposed conditions of the process, the strong invariance principle formulated in Theorem \ref{Multi_SIP} holds. The first claim then follows by \citet[Theorem 1 and Lemma 3]{damerdji1995}  and the second by \citet[Proposition 2]{damerdji1995}.
\end{proof}
The first and second term in (\ref{mse}) describe the variance, whereas the third term represents the bias. Note that the second term does not depend on $\ell_T$ and tends to zero. The obtained bounds for the variance are sharp, whereas, the bounds for bias have room for improvement.

In the multivariate setting, where our quantity of interest is given by $\pi(f)$, with $f:\mathbb{R}^p\rightarrow
\mathbb{R}^d$ a given $\pi$-integrable function, the batch means estimator is given by
\begin{equation}
\label{multi_bm_est}
\hat{\Sigma}_T=\frac{\ell_T}{k_T-1}\sum_{i=1}^{k_T}\left(\bar{Z}_i(\ell_T)-\frac{1}{k_T}\sum_{i=1}^{k_T}\bar{Z}_i(\ell_T)\right)\left(\bar{Z}_i(\ell_T)-\frac{1}{k_T}\sum_{i=1}^{k_T}\bar{Z}_i(\ell_T)\right)^T,
\end{equation}
\noindent
where $\bar{Z}_i(\ell_T)$ is defined in (\ref{batch_def}).
Given the strong invariance principle of Theorem \ref{Multi_SIP}, the results of \citet{multivariate_output} for the multivariate batch means estimator immediately carry over.

\begin{theorem}
\label{multi_bm}
Let $Z$ be polynomially ergodic of order $\beta \geq (1+\varepsilon)(1+2/\delta)$ for some $\varepsilon,\delta>0.$ Let $f:E\rightarrow \mathbb{R}^{d}$ with $\pi
(\norm{f}^{2+\delta})< \infty$. Assume that Assumption \ref{as_var_1} holds and that 
\begin{equation}
\label{as_multi_bm}\frac{\psi_T^{2}}{\ell_T}\log(T) \rightarrow 0, \ \textrm{as}\  T\rightarrow \infty,
\end{equation}
with $\psi_T$ defined in (\ref{multi_rate}), 
then for every initial distribution we have that $\hat{\Sigma}_T\rightarrow \Sigma_f$ as $T\rightarrow \infty$ with probability 1.
\end{theorem}
\begin{proof}
 The claim follows from Theorem \ref{Multi_SIP} and \citet[Theorem 2]{multivariate_output}.
\end{proof}

Furthermore, if the target distribution is of product form and we consider the Zig-Zag Sampler, then Theorem \ref{multi_zz_sip} gives a strong invariance principle with an explicit approximation error. Therefore, we can replace condition (\ref{as_multi_bm}) of Theorem \ref{multi_bm} with (\ref{as_bm1}) for every component of the Zig-Zag process. This results in a condition that can more easily be verified.

\subsection{Discussion}
\subsubsection{{Batch size selection for PDMC}}
\citet{glynn1991} show that there exists no consistent estimator of $\sigma_f^2$ with a fixed amount of batches. Hence the amount of batches should explicitly depend on the length of the simulation $T$.
For the standard choice $\ell_T=T^\alpha$ we see that for $\alpha > 1/2p$ we obtain both strong consistency and $L^2$-convergence of the batch means estimator. Theorem \ref{t_mse} suggests that $\alpha^*=(2+p)/2p$ would be optimal in the mean squared error sense.
The well-known results of \citet{chien1997large}, \citet{goldsman1990}, and \citet{song1995optimal} obtain a bound for the bias of order $O(\ell_T^{-1})$, which implies an optimal (in the MSE sense) batch size of ${\ell}^{\diamond}_T \asymp {T}^{1/3}\hspace{-0.1cm}.$ However, the aforementioned results require the sampling process to be stationary, uniformly ergodic, and satisfy moment condition $\pi(f^{12})< \infty$.  
Obtaining the bias term of order $O(\ell_T^{-1})$ for batch means under milder conditions remains an unaddressed problem. Theorem \ref{t_bm} and \ref{t_mse} only require a strong invariance principle, which we have shown holds under polynomial ergodicity; a very reasonable assumption for simulation output. Moreover, these results do not require stationarity and thus hold for every initial distribution.
  Theorem \ref{t_mse} imposes more demanding conditions on $\ell_T$ than aforementioned frameworks, however, it is quite reasonable to let the batch size depend on the dependence structure of the process through $\psi_T$, instead of only on through the constant $\int s\gamma(s) ds$, as is the case in the aforementioned results. Moreover, in practice the performance of batch means methods with batch size ${\ell}^{\diamond}_T$ are often found to be sub-optimal whereas larger batch sizes see better finite sample performance, as noted by for example \citet{flegal_bm}.  
  
  An alternative approach for determining the optimal batch size was given by \citet{chien1988small}, who obtains an optimal batch size of $\tilde{\ell}_T \asymp T^{1/2}$ by minimising the distance between the cumulants of the studentised ergodic average and a standard Gaussian, which suggest that the resulting confidence intervals enjoy improved finite-sample properties. 

\subsubsection{{Asymptotic normality of the batch means estimator}}
We see that given polynomial ergodicity, also the central limit theorem for the batch-means estimator carries over to the PDMC setting. The results of 
 \citet{sherman2002large} require uniform ergodicity and the moment condition $\pi(f^{12})<\infty$  in order to obtain  asymptotic normality of the batch means estimator.
 Since uniform ergodicity is not attainable for most practical problems, less stringent conditions on the rate of ergodicity are desired. Theorem~\ref{t_mse} places more restricting conditions on the batch size and excludes the choice $\ell^{\diamond}_T\asymp T^{1/3}.$  \citet{chakraborty2019} obtain a CLT for the batch means estimator assuming reversibility, stationarity, geometric ergodicity and moment condition $\pi(f^8)<\infty.$ Moreover, the required batch size  must be such that $k_T=o(\ell_T^2)$. Hence their result is also unable to guarantee  asymptotic normality for batch size $\ell^{\diamond}_T.$ We see that Theorem \ref{t_mse} gives more practical conditions for guaranteeing asymptotic normality of the batch-means estimator, in particular, the results are applicable to non-reversible processes. 

\subsubsection{Spectral variance and overlapping batch means estimators for the PDMC standard error}

Analogous to the batch means method, given the strong invariance principle formulated in Theorem \ref{Multi_SIP}, many results for other estimators of the asymptotic variance also carry over. \citet{flegal_bm} gives more convenient alternatives for some of the requirements of the framework given in \citet{damerdji1991}. The results of \citet{flegal_bm} regarding spectral variance and overlapping batch means estimators for MCMC output are thus also applicable for PDMC, with minor adjustments to their assumptions. Note that the assumed minorisation condition and geometric ergodicity of the Markov chain in \citet{flegal_bm} are only imposed such that the strong invariance principle of  \citet{sip_regenerative} holds. Although implementation of spectral variance estimators for continuous-time output might be impractical, 
these estimators are still of theoretical interest. Numerous estimation methods, such as overlapping batch means and certain standardised time series methods, with feasible implementation for PDMC output, can be show to be (asymptotically) equivalent to spectral estimators. Furthermore, we expect the results of \citet{multivariate_consistency} and \citet{liu2021batch} regarding spectral variance and generalised overlapping batch means estimators respectively to remain valid in the continuous-time setting. Hence also the implications for the optimal values of the tuning parameters of these estimation methods for the asymptotic variance remains valid. Lastly, note that our results hold for all sampling algorithms that produce continuous-time output, and are not restricted to the PDMP setting.

\section{Increments of Additive Functionals of Ergodic Markov processes}
Strong approximation results enable various asymptotic properties of Brownian motion to carry over to other stochastic processes. In this section, we show that the strong invariance principle given in Theorem \ref{main_sip} can be used to show that the increments of additive functionals of Markov processes are of the same magnitude as Brownian increments, provided we have sufficient decay of the approximation error. The following theorem describes the magnitude of the fluctuations of Brownian increments over subintervals of length $a_T.$

\begin{theorem}[\protect{{\citet[Theorem 1]{biga_csorgo}}}]

\label{Brownie_fluctie}
Let $W=(W_t)_{t \geq 0}$ denote a Brownian motion, and let $a_T$ be a  positive non-decreasing function of $T$ such that $0<a_T\leq T$ and $T/a_T$ is non-decreasing. Then
\begin{equation}
  \limsup_{T\rightarrow \infty}\sup_{0\leq t\leq T-a_T}\sup_{0\leq u \leq a_T}\beta_T\abs{W_{t+u}-W_t}=1  \quad \textrm{a.s.},
\end{equation}
where 
$$\beta_T=\left(2a_T\left[\log \frac{T}{a_T}+\log\log T \right]\right)^{-1/2}.$$
\end{theorem}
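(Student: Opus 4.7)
The plan is to prove the equality in two matching halves: an upper bound $\limsup \leq 1$ a.s.\ via a first Borel--Cantelli argument with Gaussian tail estimates, and a lower bound $\limsup \geq 1$ a.s.\ via the second Borel--Cantelli lemma applied to disjoint (hence independent) Brownian increments. The normalisation $\beta_T$ is precisely calibrated so that both sides pin the constant down at exactly $1$, and I would work throughout along a geometric skeleton $T_n = \theta^n$ with $\theta > 1$ close to $1$, then use the monotonicity hypotheses on $a_T$ and $T/a_T$ to interpolate.

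For the upper bound, on $[0,T_n]$ a standard chaining/grid argument reduces the double supremum over $t \in [0,T_n - a_{T_n}]$ and $u \in [0,a_{T_n}]$ to the maximum of $O((T_n/a_{T_n})^2)$ centred Gaussians of variance at most $a_{T_n}$. Applying the Gaussian tail $\P(|\xi| > x) \leq 2\exp(-x^2/(2a))$ at $x = (1+\varepsilon)/\beta_{T_n}$, the $\log\log T$ correction built into $\beta_T$ absorbs the polynomial union-bound factor and the resulting probabilities are summable in $n$. Borel--Cantelli then gives $\limsup \leq 1+\varepsilon$ along the subsequence, and the monotonicity of $a_T$ and $T/a_T$ lets me sandwich between consecutive $T_n$'s to recover the claim for arbitrary $T$.

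For the lower bound, I would partition $[0,T_n]$ into $N_n = \lfloor T_n/a_{T_n} \rfloor$ disjoint subintervals of length $a_{T_n}$, producing i.i.d.\ increments $\Delta_k \sim \N(0,a_{T_n})$. The sharp Gaussian lower tail gives $\P(\Delta_k > (1-\varepsilon)/\beta_{T_n}) \gtrsim \exp\bigl(-(1-\varepsilon)^2[\log(T_n/a_{T_n}) + \log\log T_n]\bigr)$, so for $\theta$ sufficiently close to $1$ the expected number of indices at which this event occurs diverges. Independence of the $\Delta_k$ together with the second Borel--Cantelli lemma then guarantees that at least one such increment appears infinitely often, yielding $\limsup \geq 1-\varepsilon$, and letting $\varepsilon \downarrow 0$ closes the gap.

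The main technical difficulty is pinning the constant exactly at $1$, rather than at some unspecified positive number. This requires tracking both the exponential and the polynomial prefactor in the Gaussian tail and verifying that the $\log\log T$ correction in $\beta_T$ precisely cancels the union-bound cost on the upper side, while still leaving enough lower-tail mass for the independence argument on the lower side. A secondary obstacle is handling the $(t,u)$ double supremum in the chaining step without picking up an extra $\sqrt{\log T}$ factor that would spoil the constant; this is resolved by choosing the grid spacing of order $a_{T_n}/\log T_n$ and controlling the local oscillation on each cell by Lévy's modulus of continuity.
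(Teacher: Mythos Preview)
The paper does not prove this statement. Theorem~\ref{Brownie_fluctie} is quoted verbatim as \citet[Theorem 1]{biga_csorgo} and is used as a black box in the proofs of Theorems~\ref{SIP1} and~\ref{flucto}; no argument for it appears anywhere in the paper. Consequently there is nothing to compare your proposal against.

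That said, your outline is the classical Cs\"org\H{o}--R\'ev\'esz strategy and is correct in spirit: geometric skeleton, Gaussian tail plus union bound for the upper half, disjoint increments and second Borel--Cantelli for the lower half, with the $\log\log T$ term in $\beta_T$ calibrated to absorb the polynomial union-bound cost. Two small points worth tightening if you actually carry it out. First, your count of $O((T_n/a_{T_n})^2)$ grid Gaussians is not quite right: with a mesh of width $\delta \asymp a_{T_n}/\log T_n$ in both $t$ and $u$ you get roughly $(T_n/\delta)(a_{T_n}/\delta) \asymp (T_n/a_{T_n})(\log T_n)^2$ cells, and it is this count that must be beaten by the Gaussian tail at level $(1+\varepsilon)/\beta_{T_n}$. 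Second, for the lower bound the events ``some increment on $[0,T_n]$ exceeds the threshold'' are not independent across $n$; the standard fix is to place the test increments in the fresh annuli $[T_{n-1},T_n]$ so that the relevant events become independent and the second Borel--Cantelli lemma applies cleanly. With these adjustments your plan goes through and reproduces the original proof.
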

  Taking $a_T=T$ gives the law of iterated logarithm, and for $a_T=c \log T$ with $c>0$, the Erd\"{o}s-R\'enyi law of large numbers for Brownian motion is obtained. This fluctuation result has been extended to other processes such as integrated Brownian motion, fractional Brownian motion, and non-stationary Gaussian processes, see \citet{lilimit}, \citet{fractional_increments} and \citet{ortegasize} respectively. While these fluctuation results are  of independent interest, they are also used as building blocks in applications, such as  proving convergence properties of 
  kernel density estimators, see for example \citet{revesz1982} and \citet{deheuvels}. These fluctuation results are also used for proving almost sure convergence of various estimators of the asymptotic variance in simulation output settings, see the references given in Section 4.1. By the Koml\'os-Major-Tusn\'ady approximation the fluctuation result immediately carries over for i.i.d. sequences satisfying appropriate moment conditions, as seen in \citet[Theorem 3.1.1 and 3.2.1]{sip_boek}.

  In order to describe the fluctuations of additive functionals over an interval of a specified length $a_T$, we require an explicit remainder term for the Brownian approximation, as given in Theorem \ref{main_sip}. 
  However, due to the appearance of the second Brownian motion in this invariance principle and the perturbed time sequences, it is not immediate that the Brownian fluctuation result carries over. \citet{SPLIT_SIP} show that the magnitude of the increments of partial sums of weakly $m$-dependent sequences are indeed given by Theorem \ref{Brownie_fluctie}, due to the smaller scaling of the second Brownian motion. However, in our case the perturbed time sequences are random since they depend on the amount of one-dependent regenerative cycles of the process, hence the desired result does not follow directly by \citet[Theorem 4]{SPLIT_SIP}.

\begin{theorem}
\label{flucto}
 Let $X=(X_t)_{t\geq 0}$ be an aperiodic, positive Harris recurrent Markov process for which Assumption \ref{AssumptionLL} is satisfied. Moreover, let $X$ be polynomially ergodic of order $\beta > 3+ p/\varepsilon$, for given $p>2$ and some $\varepsilon>0$. Consider a function $f: E\rightarrow \mathbb{R}$ with $\pi(f)=0$ and\\ $\pi(|f|^{p+\varepsilon})< \infty$. Let $a_T$ be a given positive non-decreasing function of $T$ such that 
 \begin{enumerate}[i.]
     \item $\displaystyle 0<a_T\leq T$,
     \item $\displaystyle T/a_T$ is non-decreasing,
     \item $\displaystyle a_T$ is regularly varying at $\infty$ with index $\zeta \in (0,1].$
 \end{enumerate}
Suppose that $\beta_T\psi_T=o(1)$, where 
$$\beta_T=\left(2a_T\left[\log \frac{T}{a_T}+\log\log T \right]\right)^{-1/2},$$
and
$$\psi_T=\max \left\{T^{1/2q}\log T, T^{1/p}\log^2(T) \right\}.$$
Then we have that 
\begin{equation}
  \limsup_{T\rightarrow \infty}\sup_{0\leq t\leq T-a_T}\sup_{0\leq u \leq a_T} \beta_T\abs{\int_{t}^{t+u}f(X_s)ds}\leq \frac{\sigma^2_\xi}{\varrho}  \quad \textrm{a.s.}
\end{equation}
\end{theorem}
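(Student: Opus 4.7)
The plan is to reduce the assertion to the Brownian fluctuation result of Theorem \ref{Brownie_fluctie} via the strong invariance principle of Theorem \ref{main_sip}. Since $\pi(f)=0$, that theorem gives an almost-surely finite random constant $C=C(\omega)$ and standard Brownian motions $W_1,W_2$ on an enriched probability space with
$$\left|\int_0^s f(X_r)\,dr - W_1(\sigma_s^2) - W_2(\tau_s^2)\right| \leq C\,\psi_s,\qquad s\geq s_0(\omega).$$
Differencing yields, for $t+u\leq T$,
$$\int_t^{t+u} f(X_s)\,ds = \bigl[W_1(\sigma_{t+u}^2)-W_1(\sigma_t^2)\bigr] + \bigl[W_2(\tau_{t+u}^2)-W_2(\tau_t^2)\bigr] + R_T,$$
with $|R_T|\leq 2C\psi_T$. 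The hypothesis $\beta_T\psi_T=o(1)$ immediately disposes of the remainder $R_T$. It remains to handle each Brownian term uniformly in $t\leq T-a_T$ and $u\leq a_T$.

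For the second Brownian motion, $\{\tau_s^2\}$ is non-decreasing with $\tau_T^2=O(T/\log T)$. Since the construction of $\tau_s^2$ in Theorem \ref{SIP1} aggregates contributions from one-dependent cycles, local increments satisfy $\tau_{t+u}^2-\tau_t^2\lesssim u/\log T$ up to a negligible boundary correction. Applying Theorem \ref{Brownie_fluctie} to $W_2$ on the interval $[0,\tau_T^2]$ with window size of order $a_T/\log T$, the relevant normalisation is $(a_T\log(T/a_T)/\log T)^{-1/2}$, so that
$$\beta_T\sup_{t,u}\bigl|W_2(\tau_{t+u}^2)-W_2(\tau_t^2)\bigr| = O\bigl(1/\sqrt{\log T}\bigr)=o(1)\qquad\text{a.s.}$$

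For the main term, write $\sigma_s^2=\lambda s+\rho_s$ with $\lambda=\sigma_\xi^2/\varrho$ and $\rho_s=O(s/\log s)$, and split
$$W_1(\sigma_{t+u}^2)-W_1(\sigma_t^2) = \bigl[W_1(\lambda(t+u))-W_1(\lambda t)\bigr] + \bigl[W_1(\sigma_{t+u}^2)-W_1(\lambda(t+u))\bigr] - \bigl[W_1(\sigma_t^2)-W_1(\lambda t)\bigr].$$
The leading bracket is an increment of the time-changed standard Brownian motion $s\mapsto W_1(\lambda s)$, and a direct application of Theorem \ref{Brownie_fluctie} to it gives the announced limsup constant. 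The two perturbation brackets are oscillations of $W_1$ over windows of order $\rho_{t+u},\rho_t=O(T/\log T)$ around a time of order $\lambda T$; invoking Theorem \ref{Brownie_fluctie} once more with window parameter $T/\log T$ shows each is $O(\sqrt{(T/\log T)\log\log T})$ a.s., and this is $o(\beta_T^{-1})$ under the standing assumption $\beta_T\psi_T=o(1)$ together with the regular variation of $a_T$.

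The main obstacle is precisely this last step: the time deformations $\sigma_s^2,\tau_s^2$ are random (they depend on the regenerative cycle lengths), so Theorem \ref{Brownie_fluctie} does not apply directly. Control must be obtained via the modulus of continuity of Brownian motion on deterministic scales that dominate the a.s.~fluctuations of $\rho_s$ and of the $\tau$-process, and one must verify that these deterministic envelopes still produce $o(\beta_T^{-1})$ bounds in the regime $a_T\gtrsim T^{2/(2+\delta)}\mathrm{polylog}(T)$ forced by the hypothesis $\beta_T\psi_T=o(1)$. This requires exploiting the cycle-additive structure of $\sigma_s^2$ and $\tau_s^2$ from Theorem \ref{SIP1}, so that local increments over windows of length $u$ scale like $u/\log T$ rather than the global bound $T/\log T$.
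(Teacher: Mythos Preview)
Your high-level strategy matches the paper's: apply the strong invariance principle, split into the $W_1$-increment, the $W_2$-increment, and the $O(\psi_T)$ remainder, and dispose of the latter via $\beta_T\psi_T=o(1)$. The difficulty you correctly identify is that the time changes $\sigma_s^2,\tau_s^2$ are random.

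However, your treatment of the $W_1$ term contains a genuine gap. Writing $\sigma_s^2=\lambda s+\rho_s$ and splitting off two perturbation brackets $W_1(\sigma_s^2)-W_1(\lambda s)$ is the wrong decomposition: each such bracket is a Brownian oscillation over a window of size $|\rho_s|$, which is a \emph{global} deviation of order $s/\log s$, not a local increment. Your claim that these brackets are $O\bigl(\sqrt{(T/\log T)\log\log T}\bigr)=o(\beta_T^{-1})$ fails whenever the regular-variation index $\zeta<1$: for instance with $a_T=T^{0.6}$ one has $\beta_T^{-1}\asymp T^{0.3}\sqrt{\log T}$, while $\sqrt{(T/\log T)\log\log T}\asymp T^{1/2}$ up to logarithms. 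The ``cycle-additive'' fix you suggest in the last paragraph cannot rescue this decomposition, because bounding $\rho_{t+u}-\rho_t$ locally does nothing for the individual terms $W_1(\lambda s+\rho_s)-W_1(\lambda s)$, which depend on $\rho_s$ itself.

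The paper avoids this by never separating the linear part. Instead it bounds the time \emph{increment} directly: using the explicit construction $\sigma_T^2=s_{N(T)}^2-\sigma_\xi^2(N(T)-T/\varrho)$ from the proof of Theorem~\ref{SIP1}, together with $\limsup_k(s_{k+1}^2-s_k^2)=\sigma_\xi^2$ and the renewal asymptotics $N(u+a_T)-N(u)\sim a_T/\varrho$, one obtains for every $\varepsilon>0$ eventually
\[
\sup_{u\geq 0}\bigl\{\sigma_{u+a_T}^2-\sigma_u^2\bigr\}\leq\Bigl(\tfrac{\sigma_\xi^2}{\varrho}+\varepsilon\Bigr)a_T
\quad\text{and}\quad
\sigma_T^2\leq\Bigl(\tfrac{\sigma_\xi^2}{\varrho}+\varepsilon\Bigr)T.
\]
With these deterministic envelopes in hand, the $W_1$-increment is dominated by a supremum of Brownian increments over windows of length $(\sigma_\xi^2/\varrho+\varepsilon)a_T$ inside $[0,(\sigma_\xi^2/\varrho+\varepsilon)T]$, and Theorem~\ref{Brownie_fluctie} applies directly after the linear time rescaling, yielding the constant $\sigma_\xi^2/\varrho$. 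The $W_2$ term is handled analogously and contributes $o(1)$. The key point you are missing is that the increment bound must be established \emph{before} invoking the Brownian fluctuation theorem, not afterwards as a correction.
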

As noted by \citet{SPLIT_SIP},  the spit invariance principle also implies the distributional version of Theorem \ref{flucto}; with similar adaptations to their argument this would also hold in our case. Since the approximation error $\psi_T$ of Theorem \ref{main_sip} cannot be guaranteed to be smaller than $O(T^{1/4}\log T)$, the fluctuation result given in Theorem \ref{flucto} cannot describe the magnitude of increments over slowly growing time intervals $a_T$.

\subsection{Application to diffusion processes}
Diffusions are an important class of processes for which the strong approximation given in Theorem \ref{main_sip} and the related fluctuation result given in Theorem \ref{flucto} are applicable. Let $X=(X_t)_{t \geq 0}$ denote a one-dimensional diffusion process that is defined as the solution of the following time-homogeneous stochastic differential equation (SDE) 
\begin{equation}
\label{sde}
\vspace{-0.15cm}
 \left\{
    \begin{array}{ll} 
        dX_t= b(X_t)d t+ \sigma(X_t) d W_t\\
        X_0\sim \mu,
    \end{array}
\right.
\end{equation}
where $\mu$ is the initial distribution of the process, $\mathfrak{X}\subseteq \mathbb{R}$ denotes the state-space, $b : \mathfrak{X} \xrightarrow{} \RR$ and $\sigma : \mathfrak{X} \xrightarrow{} \RR$ denote the drift and volatility function respectively, and the process $W$ is a Brownian motion. We assume that all required regularity conditions hold such that the existence and uniqueness of a strong solution of the SDE is guaranteed. For example, we can impose Lipschitz conditions on the drift and volatility of the SDE. For a more detailed explanation, we refer to  \citet{rogerswilliams}. 

For diffusion processes to admit the desired ergodic properties we must impose additional regularity conditions. 
The scale function of a one-dimensional diffusion process is given by
\begin{equation}
\label{scale}s(u)=\int_x^u\exp\left[-2\int_{x}^z\frac{b(y)}{\sigma^2(y)}dy \right]dz\ \textrm{and must satisfy}\lim_{u \rightarrow \pm \infty} s(u)=\pm \infty. \end{equation}
If condition (\ref{scale}) holds it follows that the diffusion is recurrent, the time for the process to return to any bounded subset of its state space is a.s. finite.
The speed density of the diffusion process $m:\mathfrak{X} \rightarrow \RR^+$, given by
$m(u)=\left(s(u)\sigma^2(u)\right)^{-1}\hspace{-0.3cm},$ must be Lebesque integrable for the diffusion to be positive Harris recurrent. For higher dimensional diffusion process \citet{bhattacharya} gives conditions that guarantee positive Harris recurrence. In order for the obtained strong invariance principles given in Theorem \ref{Multi_SIP} and Theorem \ref{main_sip} to hold, we require polynomial or exponential convergence to stationarity. These assumptions are usually obtained by verifying drift conditions for the diffusion processes, see for example \citet[Theorem 8.3 and 8.4]{cattiaux} and \citet[Theorem 3.1 and 4.1]{stramer1999}. 


In order for the strong approximation result in Theorem \ref{main_sip} and the related fluctuation result of Theorem \ref{flucto} to hold, the Nummelin splitting scheme of \citet{locherbach2008num} must be applicable. Therefore we must impose regularity conditions such that Assumption \ref{AssumptionLL} is satisfied, i.e., the transition semigroup of the diffusion must be Feller and admit densities with respect to some dominating measure. Under appropriate growth and continuity conditions on the drift and volatility, diffusion processes are Feller, see for example \citet[Theorem 2.2]{williams2006}. Moreover, if the volatility function $\sigma$ is strictly positive (positive-definite in the multivariate case), the diffusion is elliptic and therefore admits transition densities; \citet[Theorem 3.2.1]{stroock1997}. Hence, Assumption \ref{AssumptionLL} is satisfied. Alternatively, for multivariate diffusions, we can impose the parabolic H\"{o}rmander condition which ensures that the propagation of the noise through the different coordinates is sufficient, such that the transition density exists, see for example \citet[Theorem 38.16]{rogerswilliams}.

\subsection{Discussion and suggestions for further research}
 We see that Theorem \ref{Multi_SIP} and Theorem \ref{main_sip} are applicable to a broad class of diffusions and extend the current results on strong approximations for diffusion processes. \citet{sip_diff1} and \citet{sip_diff2} obtain strong invariance principles for diffusions and a complementary fluctuation result and change point test  respectively. The results of \citet{sip_diff2} yield an explicit approximation error comparable to that of Theorem \ref{main_sip}, but are only applicable to stochastic integrals with respect to Brownian, i.e., diffusion processes with no drift. The results of \citet{sip_diff1} give an implicit approximation error and hold for singular diffusions. The strong invariance principle of \citet{sip_diff1} is not covered by our results since singular diffusions generally do not satisfy the mixing properties required for our framework.

The obtained strong invariance principles offer numerous applications for diffusion processes, see for example \citet{applications_sip} and their given references. Following the approach of \citet[Proposition 2]{SPLIT_SIP}, Theorem \ref{main_sip} can be used to obtain a change-point test for diffusions. If the diffusion process we consider has a drift that enforces mean-reversion, we could construct a test for the existence of a deterministic linear trend over specified time periods. This approach would require continuous-time output of a diffusion process, and is therefore more of theoretical interest. However, it is plausible that the asymptotic behaviour of the change-point test should carry over to the high-frequency setting, where the diffusion is observed discretely and it is assumed the inter-observation times tend to zero. 

\section{Proofs}

\subsection{Theorem \ref{Multi_SIP}}
\citet{MIX_SIP} give a strong invariance principle for mixing random variables. In order to state their result, we first briefly introduce mixing coefficients. Let $\mathscr{A}$ and $\mathscr{B}$ denote two sub $\sigma$-algebras of our probability space. The $\alpha$-mixing coefficients of 
two $\sigma$-algebras quantify their dependence as follows 
$$\alpha(\mathscr{A},\mathscr{B})=\sup \{\Pr(F\cap G)-\Pr(F)\Pr(G): F \in \mathscr{A},\ G \in \mathscr{B}\}.$$
The mixing coefficients of a stochastic process $X$, endowed with its natural filtration, are defined as $\alpha_X(s):=\sup_t \alpha(\mathcal{F}_{-\infty}^{ t},\mathcal{F}_{t+s}^\infty) \ \textrm{for }s>0,$
with $\mathcal{F}_{-\infty}^{t}= \sigma(X_u: u \leq t) $ and $
\mathcal{F}_{t+s}^{\infty}= \sigma(X_u: u \geq t+s )$. The mixing coefficients of a process measure the dependence between events in terms of units of time that they are apart. For a stationary Markov process the mixing coefficients simplify as $\alpha(s)=\alpha(\sigma(X_0),\sigma(X_s))$, as shown in for example \citet[page 118]{Bradley}. 

\begin{theorem} [\protect{\citet[Theorem 4]{MIX_SIP}}] 
\label{Koning_SIP}
Let $\xi=(\xi_k)_{k=1}^\infty$ be a weakly stationary sequence taking values in $\mathbb{R}^p$ with mean zero and $\sup_k\mathbb{E}\norm{\xi_k}^{p}\leq 1$, for some $\delta \in (0,1]$. Moreover, let $\alpha_\xi$ the $\alpha$-mixing coefficients of $\xi$ decay polynomially with rate $n^{-(1+\varepsilon)(1+2/\delta)}$ for some $\varepsilon>0$. Then we can redefine $\xi$ on a new probability space on which we can also construct a $p$-dimensional Brownian motion $W$ with covariance matrix $\Sigma_\xi$, with absolutely converging entries 
$$(\Sigma_\xi)_{ij}=\mathbb{E}[\xi_{i1}\xi_{j1}]+\sum_{k=2}^\infty\mathbb{E}[\xi_{i1}\xi_{jk}]+\sum_{k=2}^\infty\mathbb{E}[\xi_{ik}\xi_{j1}],\  \textrm{for}\  1\leq i,j \leq p,$$
such that 
$$\norm{\sum_{k=1}^n \xi_k-W(n)}=O(n^{1/2-\lambda_{\xi}})\ \textrm{a.s.}$$
for some $\lambda_{\xi} \in (0,1/2)$ depending only on $\varepsilon,\delta$ and $p$.
\label{Philly2}
\end{theorem}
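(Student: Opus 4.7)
The plan is to adapt the classical Berkes--Philipp/Kuelbs--Philipp blocking scheme to the multivariate $\alpha$-mixing setting. First I would partition $\{1,\dots,n\}$ into consecutive ``big'' blocks $B_i$ of length $p_i$ alternating with ``small'' buffer blocks $b_i$ of length $q_i$, with $q_i\ll p_i$ at polynomial rates to be tuned later. Writing $Y_i=\sum_{k\in B_i}\xi_k$ and $Z_i=\sum_{k\in b_i}\xi_k$, one decomposes $\sum_{k=1}^n\xi_k=\sum_i Y_i+\sum_i Z_i$ and treats the two sums separately. The buffer sum should be rendered asymptotically negligible: using Rio--Shao-type Rosenthal inequalities for $\alpha$-mixing vectors together with the stated moment assumption, one obtains $\mathbb{E}\norm{Z_i}^{2+\delta}\leq C q_i^{1+\delta/2}$, and summing this bound with a Borel--Cantelli argument gives $\norm{\sum_i Z_i}=O(n^{1/2-\kappa})$ a.s.\ for some $\kappa>0$ provided $q_i$ grows slowly enough.

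The main step is to replace the big blocks by an independent sequence. I would invoke Bradley's coupling lemma blockwise: given the decay of $\alpha(q_i)$ forced by the polynomial rate assumed in the theorem, one can construct on an enlarged probability space independent vectors $Y_i^*\stackrel{d}{=}Y_i$ such that $\mathbb{P}(\norm{Y_i-Y_i^*}>\eta_i)$ is bounded by $C\,\alpha(q_i)^{\delta/(2+\delta)}\norm{Y_i}_{2+\delta}/\eta_i$. The stated mixing rate $n^{-(1+\varepsilon)(1+2/\delta)}$ is precisely what is required to make these probabilities summable for an appropriate choice of $\eta_i$, so that $\norm{\sum_i(Y_i-Y_i^*)}=o(n^{1/2-\lambda})$ almost surely via Borel--Cantelli.

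Once the $Y_i^*$ are independent with uniformly controlled $(2+\delta)$-moments (after the natural $p_i^{1/2}$ rescaling), I would apply a multivariate Koml\'os--Major--Tusn\'ady-type strong approximation for independent random vectors, such as Einmahl's or Zaitsev's theorem, to obtain a $p$-dimensional Brownian motion $W$ on the same space with
$$\norm{\sum_{i\leq k}Y_i^*-\Sigma_\xi^{1/2}W(T_k)}=O(T_k^{1/2-\lambda'})\ \textrm{a.s.,}$$
where $T_k=\sum_{i\leq k}p_i$ and $\Sigma_\xi$ is the limiting covariance matrix. Absolute convergence of the entries of $\Sigma_\xi$ then follows from the classical mixing covariance bound $\abs{\Cov(\xi_{i1},\xi_{jk})}\leq C\,\alpha(k)^{\delta/(2+\delta)}$ combined with the polynomial mixing rate. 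Reassembling the three contributions (buffer, coupling, KMT) and interpolating between the block endpoints $T_k$ and intermediate times $n$ yields the claimed approximation at rate $O(n^{1/2-\lambda_\xi})$ with $\Sigma_\xi$ as the Brownian covariance.

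The main obstacle is the simultaneous tuning of the block lengths $p_i,q_i$ against three competing error terms: the Einmahl/Zaitsev approximation rate for independent blocks, the moment-based coupling probability, and the buffer-block moment bound each depend on $\delta$, $p$, and $\varepsilon$ in different ways, so extracting a concrete positive $\lambda_\xi=\lambda_\xi(\varepsilon,\delta,p)$ amounts to solving a small optimisation problem in the exponents. A secondary subtlety is that $\alpha$-mixing coupling, unlike $\beta$-mixing coupling, is not ``for free''---Bradley's theorem charges an auxiliary $(2+\delta)$-moment of $Y_i$---so the block-sum moments must be carefully propagated from the individual $\xi_k$ via a stationary Rosenthal-type inequality before the coupling step can be invoked.
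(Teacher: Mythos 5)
This statement is not proved in the paper at all: it is quoted verbatim as an external tool (Theorem 4 of the cited reference, the Kuelbs--Philipp-type strong invariance principle for $\alpha$-mixing vector sequences) and is used as a black box in the proof of Theorem \ref{Multi_SIP}. So there is no internal proof to compare against; what you have written is a reconstruction of the classical argument behind the cited result. As a sketch it follows the right strategy -- big/small blocking, negligibility of buffer blocks via Rosenthal-type moment bounds, replacement of the big blocks by independent copies using the mixing decay, and a strong approximation for the resulting independent vectors -- and this is indeed the skeleton of the original proof, although the original argument approximates the normalised blocks by Gaussian vectors through quantitative CLT estimates and the Berkes--Philipp approximation theorem (via Strassen--Dudley) rather than through a modern Einmahl/Zaitsev-type KMT theorem, which postdates it.

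Two points in your sketch deserve more care before it could count as a proof. First, Bradley's coupling lemma is stated for real-valued random variables and its bound has the form $C\bigl(\norm{Y}_{\gamma}/\eta\bigr)^{\gamma/(2\gamma+1)}\alpha^{2\gamma/(2\gamma+1)}$, not the linear-in-$1/\eta$ bound $C\,\alpha(q_i)^{\delta/(2+\delta)}\norm{Y_i}_{2+\delta}/\eta_i$ you quote; for $\mathbb{R}^p$-valued blocks you must either couple coordinatewise (paying a factor depending on $p$, which is one source of the dependence of $\lambda_\xi$ on $p$) or use the Strassen--Dudley/Berkes--Philipp route. Second, applying a multivariate strong approximation to the independent blocks requires control of how fast $\Cov\bigl(\sum_{i\le k}Y_i^*\bigr)$ converges to $T_k\Sigma_\xi$ (a polynomial rate, obtainable from the Davydov bound, must be made explicit) and some care when $\Sigma_\xi$ is singular, since the standard non-i.i.d. approximation theorems assume nondegeneracy or force you to work in the range of $\Sigma_\xi$. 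With these repairs and the exponent bookkeeping you defer, the outline would yield the stated result, but as written it is an outline of the known external proof rather than an alternative to anything in this paper.
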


The following lemmata are useful in the proof of Theorem \ref{Multi_SIP}.
\begin{lemma}[\protect{\citet[Theorem F.3.3]{douc2018markov}}]
\label{Z_decay}
Let $X$ be a stationary ergodic Markov process with rate of convergence to stationarity given by $\psi$, then 
$\alpha_X(s)$, the $\alpha$-mixing coefficients of the process $X$, decay according to $\psi$, i.e., for all $s\geq 0$ we have that
$$\alpha_X(s)\leq \pi(V)\Psi(s),$$
where $\Psi$ and $V$ are as stated in (\ref{ergodic}).
\end{lemma}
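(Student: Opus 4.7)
The plan is to reduce the general $\alpha$-mixing quantity to a two-dimensional comparison through two applications of the Markov property, and then to bound the resulting quantity by the total-variation ergodicity hypothesis. By stationarity, the supremum over $t$ in the definition of $\alpha_X(s)$ is redundant, so it suffices to bound $|\Pr(F\cap G)-\Pr(F)\Pr(G)|$ for arbitrary $F\in \mathcal{F}_{-\infty}^{0}$ and $G\in \mathcal{F}_{s}^{\infty}$; complementation of $F$ captures the signed supremum in the paper's definition.

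First I would condition on $X_0$ to eliminate the past. Using the Markov property at time $0$ (combined with a monotone-class argument to extend from cylinder events to $\mathcal{F}_s^\infty$),
$$
\Pr(F\cap G)=\mathbb{E}\bigl[\mathbbm{1}_F\, h(X_0)\bigr], \qquad h(x):=\Pr(G\mid X_0=x),
$$
so by stationarity $\Pr(G)=\pi(h)$ and hence $\Pr(F\cap G)-\Pr(F)\Pr(G)=\mathbb{E}[\mathbbm{1}_F(h(X_0)-\pi(h))]$. Next I would condition on $X_s$ to bring in the time lag: applying the Markov property at time $s$ gives $h(x)=\int g(y)\,P_s(x,dy)$ with $g(y):=\Pr(G\mid X_s=y)\in[0,1]$. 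Invariance of $\pi$ under $(P_t)$ yields $\pi(h)=\int g(y)\,\pi(dy)$, so that
$$
h(x)-\pi(h)=\int g(y)\,\bigl(P_s(x,dy)-\pi(dy)\bigr).
$$

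Since $\|g\|_\infty\leq 1$, the dual representation of total variation gives $|h(x)-\pi(h)|\leq \|P_s(x,\cdot)-\pi\|_{TV}\leq V(x)\Psi(s)$, using the ergodicity assumption (\ref{ergodic}). Substituting back and using $|\mathbbm{1}_F|\leq 1$,
$$
|\Pr(F\cap G)-\Pr(F)\Pr(G)|\leq \mathbb{E}[V(X_0)]\,\Psi(s)=\pi(V)\,\Psi(s),
$$
and taking the supremum over $F,G$ delivers the claim.

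The only delicate point is that $F,G$ lie in $\sigma$-algebras generated by uncountably many coordinates, so both applications of the Markov property require a measurability / monotone-class argument justifying the kernel-integral representations of $h(x)$ and the identity $\pi(h)=\int g\,d\pi$. This is standard for strong Markov processes on a Polish state space, and since it is orthogonal to the ergodicity input, it does not affect the final bound.
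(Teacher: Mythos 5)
Your proof is correct: the two applications of the Markov property reduce $\alpha_X(s)$ to the comparison of $P_s(X_0,\cdot)$ with $\pi$, and since $g$ takes values in $[0,1]$ the bound $\abs{h(x)-\pi(h)}\leq \norm{P_s(x,\cdot)-\pi}_{TV}\leq V(x)\Psi(s)$ followed by integration against $\pi$ gives exactly the stated inequality, with the complementation remark correctly handling the signed supremum in the paper's definition of $\alpha$. The paper does not prove this lemma itself but imports it from the cited reference, and your argument is essentially the standard proof behind that citation (reduction of the past/future mixing coefficient to $\alpha(\sigma(X_0),\sigma(X_s))$ for a stationary Markov process, then the total-variation bound), so there is nothing to add beyond the measurability/monotone-class caveat you already flag.
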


\begin{lemma}[\citet{Davydov} and \citet{Rio}]
\label{cov_bound}
Let $(\Omega,\mathscr{F},\Pr)$ be a probability space and $\mathscr{A}$ and $\mathscr{B}$ be two sub $\sigma$-algebras and consider random variables $X$ and $Y$ that are measurable with respect to these $\sigma$-algebras respectively. Moreover, assume that $X \in L^p(\Pr)$ and $Y \in L^q(\Pr)$, for some $p,q \geq 1$. Then we can bound their covariance in terms of the $\alpha$-mixing coefficients as follows
$$\lvert\Cov (X,Y)\rvert \leq 8 \alpha\left(\mathscr{A},\mathscr{B}\right)^{1/r}\norm{X}_p\norm{Y}_q,\ \textrm{where}\ p,q,r \in [1,\infty]\  \textrm{and}\ \frac{1}{p}+\frac{1}{q}+\frac{1}{r}=1.$$
\end{lemma}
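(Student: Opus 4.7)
The plan is to establish the inequality in two stages: first for bounded random variables, where a short argument based on Hoeffding's covariance identity and the very definition of the strong mixing coefficient gives a clean bound; then for the general $L^p\times L^q$ case, by truncation.

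\textbf{Stage 1 (bounded case).} For $X, Y$ bounded and measurable with respect to $\mathscr{A}, \mathscr{B}$ respectively, I would first show
$$|\Cov(X,Y)| \leq 4\|X\|_\infty \|Y\|_\infty\, \alpha(\mathscr{A}, \mathscr{B}).$$
For indicators $X = \mathbbm{1}_A$, $Y = \mathbbm{1}_B$ with $A \in \mathscr{A}, B \in \mathscr{B}$, the bound $|\Cov(\mathbbm{1}_A, \mathbbm{1}_B)| = |\Pr(A \cap B) - \Pr(A)\Pr(B)| \leq \alpha(\mathscr{A}, \mathscr{B})$ is immediate from the definition of the strong mixing coefficient. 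For general bounded $X$, $Y$, I would represent
$$X - \mathbb{E}X = \int_{-\|X\|_\infty}^{\|X\|_\infty} \bigl(\mathbbm{1}_{\{X > s\}} - \Pr(X > s)\bigr)\, ds,$$
and similarly for $Y - \mathbb{E}Y$, then apply Fubini to rewrite $\Cov(X,Y)$ as a double integral of covariances of indicators. Integrating the pointwise bound yields the claim.

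\textbf{Stage 2 (general case).} For $X \in L^p(\mathscr{A}), Y \in L^q(\mathscr{B})$, I would truncate by setting $X_u = (X \wedge u) \vee (-u)$ and $Y_v = (Y \wedge v) \vee (-v)$ for truncation levels $u, v > 0$, and decompose
$$\Cov(X,Y) = \Cov(X_u, Y_v) + \Cov(X - X_u, Y) + \Cov(X_u, Y - Y_v).$$
The first term is bounded by $4 uv\, \alpha(\mathscr{A},\mathscr{B})$ via Stage 1. For the two tail terms, I would combine Hölder's inequality with exponents $(p,q,r)$ and Markov's inequality: for instance, after bounding the covariance by twice the expected absolute value,
$$|\Cov(X - X_u, Y)| \leq 2\,\mathbb{E}\bigl(|X|\, \mathbbm{1}_{\{|X| > u\}}\, |Y|\bigr) \leq 2\|X\|_p\|Y\|_q\, \Pr(|X|>u)^{1/r},$$
and then $\Pr(|X|>u) \leq u^{-p}\|X\|_p^p$ by Markov. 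After balancing $u$ and $v$ against $\alpha^{1/r}\|X\|_p$ and $\alpha^{1/r}\|Y\|_q$ so that all three contributions are of the same order, the total bound collapses to the claimed $C\alpha(\mathscr{A},\mathscr{B})^{1/r}\|X\|_p\|Y\|_q$ with $C\leq 8$.

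\textbf{Main obstacle.} The delicate step is the bookkeeping in the truncation argument: one must track the Hölder exponents correctly, choose the truncation levels so that all three terms in the decomposition have matching orders in $\alpha$, and then verify that the resulting constant does not exceed $8$. The boundary cases $p=\infty$, $q=\infty$, or $r=\infty$ require separate treatment (with $r=\infty$ reducing directly to Stage 1, and $p=\infty$ or $q=\infty$ avoiding truncation on that coordinate). A cleaner but technically heavier alternative is Rio's quantile approach, based on the identity $|\Cov(X,Y)| \leq 2\int_0^{2\alpha} Q_{|X|}(u)Q_{|Y|}(u)\,du$ followed by a single application of Hölder's inequality to the right-hand side; this bypasses the truncation bookkeeping but requires first setting up the quantile transform and the variational characterisation of $\alpha$.
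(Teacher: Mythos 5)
The paper itself offers no proof of this lemma: it is imported verbatim from the literature, with the $L^p$--$L^q$--$\alpha^{1/r}$ form attributed to Davydov and the sharper quantile version to Rio. Your two-stage argument is precisely the standard proof of Davydov's inequality, and it is sound: Hoeffding's representation reduces the bounded case to indicators, where $\abs{\Cov(\mathbbm{1}_A,\mathbbm{1}_B)}\leq\alpha(\mathscr{A},\mathscr{B})$ is the definition of $\alpha$, and integrating over the two ranges of length $2\norm{X}_\infty$ and $2\norm{Y}_\infty$ gives the constant $4$; the truncation levels $u=\norm{X}_p\,\alpha^{-1/p}$ and $v=\norm{Y}_q\,\alpha^{-1/q}$ then balance the three terms to $4+2+2=8$ times $\alpha^{1/r}\norm{X}_p\norm{Y}_q$, and your remarks on the boundary cases $p,q,r=\infty$ are correct. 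One small slip in the write-up: the intermediate bound $\abs{\Cov(X-X_u,Y)}\leq 2\,\E\bigl(\abs{X}\mathbbm{1}_{\{\abs{X}>u\}}\abs{Y}\bigr)$ is not literally valid, since $\Cov(X-X_u,Y)$ also contains the term $\E\abs{X-X_u}\,\E\abs{Y}$, which need not be dominated by the joint expectation. The fix is routine: bound the two pieces separately, using three-factor H\"older for $\E\bigl(\abs{X}\mathbbm{1}_{\{\abs{X}>u\}}\abs{Y}\bigr)\leq\norm{X}_p\norm{Y}_q\,\Pr(\abs{X}>u)^{1/r}$ and, for the product term, $\E\bigl(\abs{X}\mathbbm{1}_{\{\abs{X}>u\}}\bigr)\leq\norm{X}_p\,\Pr(\abs{X}>u)^{1-1/p}$ together with $1-1/p=1/q+1/r\geq 1/r$ and $\Pr(\abs{X}>u)\leq 1$, so each piece obeys the same bound $\norm{X}_p\norm{Y}_q\,\Pr(\abs{X}>u)^{1/r}$ and the displayed conclusion, the balancing step, and the constant $8$ all survive unchanged. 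Your closing observation is also accurate: Rio's route via $\abs{\Cov(X,Y)}\leq 2\int_0^{2\alpha}Q_{\abs{X}}(t)Q_{\abs{Y}}(t)\,dt$ plus one application of H\"older recovers the same bound (with a better constant) and avoids the truncation bookkeeping, which is exactly why the paper cites both sources.
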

\noindent
Following a traditional blocking argument it is now straightforward to show that the result of \citet{MIX_SIP} also holds for continuous-time ergodic processes.

\subsubsection{Proof of Theorem \ref{Multi_SIP}}
\begin{proof}
For notational convenience introduce $Y=(Y_t)_{t\geq 0}$, where $Y_t:=f(X_t)-\pi(f)$ for $t\geq 0$, and $\xi=(\xi_k)_{k=1}^n$, with $n:=n_T:=\floor{T}$ and 
$\xi_k:=\int_{k-1}^{k}Y_tdt$ for $k=1,\cdots,n$. Note that $Y_t$ is a $d$-dimensional vector, i.e., $Y_t=(Y_{1t},\cdots,Y_{dt})^\top$ and therefore also each $\xi_k$ is a $p$-dimensional vector, $\xi_k=(\xi_{1k},\cdots,\xi_{dk})^\top.$ Furthermore, by definition $n$ is a function of the sample size $T$, however, for notational convenience we suppress this. Since we are in the setting of Lemma \ref{Z_decay}, $X$ has polynomially decaying $\alpha$-mixing coefficients, which we will denote with $(\alpha_X(s))_{s \geq 0}$. Consequently, we have that $Y$ and 
$\xi$ are both stationary and strongly mixing processes with polynomially decaying $\alpha$-mixing coefficients $(\alpha_Y(s))_{s \geq 0}$ and  $(\alpha_\xi(h))_{h \in \mathbb{N}}$ respectively. This can easily be seen by observing that $\sigma\left(f(X_t)\right)\subseteq \sigma \left(X_t\right)$ and $\sigma\left(\xi_k\right)\subseteq \sigma\left(X_s: s\leq k\right)$.  In order to show that a strong invariance holds for $Y$, we will show that it holds for $\xi$ and determine the growth rate of the corresponding remainder terms. Moment conditions for $\xi$ are directly inherited by the assumed moment conditions for $X$. By an application of Jensen's inequality we see that for $p=2+\delta$ we have that
\begin{align*}
    \pi\left(\norm{\xi_k}^{p}\right)&=\mathbb{E}_\pi\norm{ \int_{k-1}^k Y_sds }^{p}\hspace{-0.2cm}\leq \ \mathbb{E}_\pi \int_{k-1}^k \norm{Y_s}^{p}ds = \pi\left(\norm{f-\pi(f)}^{p}\right) < \infty.
\end{align*}
Therefore, by Theorem \ref{Koning_SIP}, we can redefine $\xi$ on a new probability space on which we can also construct a $d$-dimensional Brownian motion $W$ with covariance matrix $\Sigma_\xi$, with absolutely converging entries 
$$(\Sigma_\xi)_{ij}=\mathbb{E}[\xi_{i1}\xi_{j1}]+\sum_{k=2}^\infty\mathbb{E}[\xi_{i1}\xi_{jk}]+\sum_{k=2}^\infty\mathbb{E}[\xi_{ik}\xi_{j1}],\  \textrm{for}\  1\leq i,j \leq d,$$
such that 
$$\norm{\sum_{k=1}^n \xi_k-W(n)}=O(n^{1/2-\lambda_{\xi}})\ \textrm{a.s.}$$
for some $\lambda_{\xi} \in (0,1/2)$ depending only on $\varepsilon,\delta$ and $d$. The claim follows if we show that 
\begin{align}
\label{TP_SIP}
\norm{\sum_{k=1}^{n}\xi_k-\int_0^T Y_t dt}&=O(T^{1/p} )\  \textrm{a.s. for}\ T\rightarrow \infty, \\
\label{Brownian_Trouble}
\norm{W_T-W_{n}}&=o(T^{1/p} ) \ \textrm{a.s. for}\ T\rightarrow \infty,\ \textrm{and}\\
\label{varvergence}
\Sigma_f&=\Sigma_\xi.
 \end{align}

In order to show that (\ref{TP_SIP}) holds, we note that
\begin{equation}
\label{remainder1}
\norm{\int_0^TY_sds-\sum_{k=1}^{n}\xi_k}= \norm{ \int_{n}^TY_sds}\leq \int_{n}^{n+1}\norm{Y_s}ds.
\end{equation}
By a Borel-Cantelli argument it will follow that
\begin{equation}
\label{BorelCantONE}
\norm{\int_{n}^T Y_s\ ds}=O(T^{1/p})\quad \textrm{a.s. for}\  T \rightarrow \infty.
\end{equation}
Indeed, let $\varepsilon>0$ be given and introduce the event $$A_{n,\varepsilon}=\left\{\int_{n}^{n+1}\norm{Y_s}ds> n^{(1+\varepsilon)/p}\right\}.$$ By Markov's inequality it follows that the introduced sequence of events satisfies 
$$\sum_{n=1}^\infty \Pr\left(A_{n,\varepsilon}\right)\leq \sum_{n=1}^\infty \Pr\left(\int_{n}^{n+1}\norm{Y_s}^{p} ds> n^{1+\varepsilon}\right)\leq \pi(\norm{f-\pi(f)})^{p}\sum_{n=1}^\infty \frac{1}{n^{1+\varepsilon}}<\infty.$$
The Borel-Cantelli lemma states that $\mathbb{P}(\limsup A_{n,\varepsilon})=0$. In order for (\ref{BorelCantONE}) to hold, it is sufficient to note that from a certain point in time the remainder term will always satisfy the asserted bound for almost every sample path, i.e.,
\begin{align*}
\Pr\left(\bigcup_{s \geq 0} \bigcap_{T\geq s}\left\{\norm{\int_n^T Y_u du}\leq T^{1/p}\right\}\right) &\geq \Pr\left(\bigcap_{m=1}^\infty \liminf_{n\rightarrow \infty} A_{n,1/m}^c\right)\\
&=\lim_{m\rightarrow \infty}\Pr\left(\liminf_{n \rightarrow \infty} A_{n,1/m}^c\right)=1,
\end{align*}
where the first inequality follows due to monotonicity and the 
second-last equality by the continuity of measures.  A similar Borel-Cantelli argument also shows that (\ref{Brownian_Trouble}) holds.
Introduce sequence of events $$B_{n,\varepsilon}=\left\{\norm{W(T)-W(n)}> n^{(1+\varepsilon)/\kappa}\right\},$$ for given $\varepsilon>0$ and some $\kappa>p$. Since all moments of $\norm{W(T)-W(n)}$ are finite, we have by Markov's inequality that the introduced sequence of events satisfies 
$$\sum_{n=1}^\infty \Pr\left(B_{n,\varepsilon}\right)\leq \sum_{n=1}^\infty \Pr\left(\norm{B_T-B_{n}}^{\kappa} > n^{1+\varepsilon}\right)\leq \mathbb{E}_{\Pr}\norm{W(T)-W(n)}^\kappa\sum_{n=1}^\infty \frac{1}{n^{1+\varepsilon}}<\infty.$$
By completely analogous reasoning as the previous Borel-Cantelli argument we see that 
$$\norm{W(T)-W(n)}=O(n^{1/\kappa})=o(T^{1/p})\  \textrm{a.s. for}\  T \rightarrow \infty.$$
 Therefore the term (\ref{Brownian_Trouble}) will be asymptotically negligible. Finally, we see that by Lemma \ref{cov_bound} it immediately follows that the asserted asymptotic variance $\Sigma_f$ is finite, i.e., 
all entries 
  \begin{equation}
  \label{cov_terms}
(\Sigma_f)_{ij}=\int_0^\infty \Cov_\pi (f_i(X_0),f_j(X_s)) \ ds+ \int_0^\infty \Cov_\pi (f_i(X_s),f_j(X_0)) \ ds\ \  \textrm{for}\  1\leq i,j \leq d
  \end{equation}
  converge absolutely. Indeed,
\begin{flalign*}
 \int_0^\infty\lvert \Cov_\pi (f_i(X_0),f_j(X_s))\rvert ds &\leq 8 \int_0^\infty \alpha_Y(s)^{\delta/p}\pi(\lvert Y_{i0}\rvert ^{p})^{1/p} \pi(\lvert Y_{js}\rvert ^{p})^{1/p} ds & \\
 &\leq 8 \pi(V)^{\delta/p} \pi(\lvert Y_{i0}\rvert ^{p})^{1/p} \pi(\lvert Y_{j0}\rvert ^{p})^{1/p} \int_0^\infty\hspace{-0.22cm} \alpha_X(s)^{\delta/p}ds < \infty.
 \end{flalign*}
 By definition $\alpha$-mixing sequences are monotonically decreasing and bounded by $1/4$. The second term of (\ref{cov_terms}) is treated similarly. In order to show that $\Sigma_f=\Sigma_\xi$, we will show that all entries are equal. Firstly, we decompose the asymptotic covariance matrix as follows
 \begin{flalign}
 \label{vareq}
&\lim_{T\rightarrow \infty} \Var_\pi \left(\frac{1}{\sqrt{T}}\int_0^T Y_tdt\right)=\lim_{T\rightarrow \infty} \Var_\pi \left(\frac{1}{\sqrt{T}}\left(\int_0^n Y_tdt+\int_n^T Y_tdt\right)\right)=& \nonumber\\
&\lim_{T\rightarrow \infty}\frac{1}{T}\Var_\pi\left(\int_0^n 
  Y_tdt\right) + \lim_{T\rightarrow \infty}\frac{1}{T}\Var_\pi\left(\int_n^t 
  Y_tdt\right)+  \lim_{T\rightarrow \infty}\frac{1}{T} \Cov_\pi\left(\int_0^n 
  Y_tdt,\int_n^t Y_tdt\right)+ \nonumber \\
  &\lim_{T\rightarrow \infty}\frac{1}{T} \Cov_\pi\left(\int_n^t Y_tdt, \int_0^n 
  Y_tdt,\right).
 \end{flalign}
 Let  $\Sigma_{T1}, \Sigma_{T2},\Sigma_{T3}$ and $\Sigma_{T4}$ denote the four terms in (\ref{vareq}). We will show that the entry-wise convergence gives us the desired result. For $  1\leq i,j \leq d$ we obtain the following expressions for the elements of the matrices in (\ref{vareq}):
  \begin{align}
  \label{var1}
(\Sigma_{T1})_{ij}&=\lim_{T\rightarrow \infty}\frac{1}{T}\int_0^n  \int_0^n
  \Cov_\pi(Y_{it},Y_{js})\ dtds \\
    \label{var2}
  (\Sigma_{T2})_{ij}&= \lim_{T\rightarrow \infty} \frac{1}{T} \int_n^T \int_n^T  \Cov_\pi(Y_{it},Y_{js})\ dtds \\
    \label{var3}
  (\Sigma_{T3})_{ij}&=\lim_{T\rightarrow \infty}\frac{1}{T} \int_0^n \int_n^T
    \Cov_\pi(Y_{it},Y_{js})\  dtds  \\  
    \label{var4}
    (\Sigma_{T4})_{ij}&=\lim_{T\rightarrow \infty}\frac{1}{T}   \int_0^n \int_n^T \Cov_\pi(Y_{is},Y_{jt})\ dtds  
 \end{align}
 We see that $(\Sigma_{T1})_{ij}$ tends to the asymptotic variance $(\Sigma_f)_{ij}$, since
 $$\left(\Var_\pi\left(\frac{1}{\sqrt{n}} \sum_{k=1}^n \xi_k \right)\right)_{ij}=\frac{T}{n}\cdot \frac{1}{T} \int_0^n\int_0^n \Cov_\pi(Y_{it},Y_{js})\ dtds.$$
Finally, another application of Lemma \ref{cov_bound} gives us that $(\Sigma_{T2})_{ij},(\Sigma_{T3})_{ij}$ and $(\Sigma_{T4})_{ij}$ tend to zero since
\begin{align*}
\frac{1}{T}\int_n^T \int_n^T \abs{\Cov_\pi(Y_{it},Y_{js})}\ dtds&\leq \frac{1}{T} C_{f,V}\int_n^T s^{-\beta\delta/p}ds=o(1), \\
\frac{1}{T}\int_0^n \int_n^T \abs{\Cov_\pi(Y_{it},Y_{js})}\ dtds &\leq \frac{n}{T} C_{f,V}\int_n^T s^{-\beta\delta/p}\ ds =o(1), \ \\
\frac{1}{T}\int_0^n \int_n^T \abs{\Cov_\pi(Y_{is},Y_{jt})}\ dtds&\leq \frac{n}{T} C_{f,V}\int_n^T s^{-\beta\delta/p} \ ds=o(1),
\end{align*} 
with $C_{f,V}=8 \pi(V)^{\delta/p}\pi(\lvert Y_{i0}\rvert ^{p})^{1/p} \pi(\lvert Y_{j0}\rvert ^{p})^{1/p}< \infty.$\\
Note that we have now shown our result assuming stationarity, i.e., with initial distribution $\pi$. However, by a standard argument this can be extended to every initial distribution. It is well known that for ergodic Markov processes every bounded harmonic function is constant, see for example \citet[Theorem 20.10]{kallenberg}. Hence by following the argument of \citet[Proposition 17.1.6]{the_gospel} it follows that the strong invariance principle holds for every initial distribution.
\end{proof}

\subsection{Proposition \ref{SIP1}}
\citet{SPLIT_SIP} give a strong invariance principle for weakly $m$-dependent processes, which are defined as processes that can be approximated by $m$-dependent processes in the $L^p$-sense, with a sufficiently decaying approximation error (rate function in terminology of \citet{SPLIT_SIP}). Their strong invariance principle, stated in Theorem \ref{SPLIT_SIP}, is obtained through a classical blocking argument for $m$-dependent random variables. By dividing an $m$-dependent sequence into non-overlapping long and short blocks, two sequences of independent random variables are obtained; these can both be approximated by a Brownian motion. Trivially, stationary $m$-dependent processes satisfying appropriate moment conditions fall into their framework. For more details we refer to \citet{SPLIT_SIP}.

\begin{theorem} [\protect{\citet[Theorem 2]{SPLIT_SIP}}] 
\label{SPLIT_SIP}
Let $\xi=(\xi_k)_{k=1}^\infty$ be a centered stationary sequence with $\sup_k\mathbb{E}\abs{\xi_k}^{p}< \infty$, for some $\delta >0$. Moreover, let $\xi$ be weakly $m$-dependent in $L^p$ with an exponentially decaying rate function $\kappa$, i.e.,
$$\kappa(m)\ll \exp(-cm), \quad \textrm{for some }c>0.$$
Then the series 
$$\sigma_\xi^2=\sum_{k=0}^\infty \mathbb{E}\xi_0\xi_k$$
is absolutely convergent, and we can redefine $\xi$ on a new probability space on which we can also construct two standard Brownian motions $W_1$ and $W_2$ such that
$$\abs{\sum_{k=1}^n \xi_k-W_1(s_n^2)-W_2(t_n^2)}=O(n^{1/p}\log^2n)\ \textrm{a.s.,}$$
where $\{s_n\}$ and $\{t_n\}$ are non-decreasing deterministic sequences with $s_n^2= \sigma_\xi^2n+O(n/\log n)$ and $t_n^2=O(n/\log n)$.
\label{splittie}
\end{theorem}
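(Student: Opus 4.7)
The approach follows the classical alternating-blocks construction for $m$-dependent sums, preceded by a reduction from weak to strict $m$-dependence. First I would choose $m_n = \lceil C \log n \rceil$ with $C$ large enough that $\kappa(m_n) = O(n^{-A})$ for an arbitrarily chosen $A>0$, and let $\xi^{(n)} = (\xi_k^{(m_n)})_k$ denote the strictly $m_n$-dependent approximating sequence furnished by the definition of weak $m$-dependence in $L^p$, so that $\|\xi_k - \xi_k^{(m_n)}\|_p \leq \kappa(m_n)$. A Borel--Cantelli argument applied after a Rosenthal/maximal inequality to the tail $\max_{k\leq n}|\sum_{i\leq k}(\xi_i - \xi_i^{(m_n)})|$ shows that this discrepancy is $o(n^{1/p})$ almost surely, so it suffices to prove the invariance principle for $\xi^{(n)}$.

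Next, partition $\{1,\ldots,n\}$ into alternating long blocks $L_j$ of length $p_n$ and short separator blocks $S_j$ of length $q_n$, with $q_n > m_n$, so that the block sums
\[ B_j = \sum_{k\in L_j}\xi_k^{(m_n)}, \qquad C_j = \sum_{k\in S_j}\xi_k^{(m_n)} \]
form two \emph{independent} sequences. Apply a Sakhanenko/Einmahl KMT-type strong approximation for independent $L^{2+\delta}$ variables separately to $\{B_j\}$ and $\{C_j\}$, obtaining on a common enriched space two Brownian motions $W_1, W_2$ with
\[ \Big|\sum_{j\leq N_n} B_j - W_1(s_n^2)\Big| = O\bigl(N_n^{1/(2+\delta)}\, p_n^{1/2}\log N_n\bigr) \quad\textrm{a.s.}, \]
and the analogous statement for the $C_j$ at time scale $t_n^2$; here $s_n^2, t_n^2$ are the accumulated block variances and $N_n \asymp n/(p_n+q_n)$.

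To identify the time scales, note that $\sigma_\xi^2 = \sum_{k\geq 0} \EE\xi_0\xi_k$ converges absolutely: each covariance can be bounded via the strict $m$-dependence of an intermediate approximation combined with Cauchy--Schwarz and the moment hypothesis, yielding decay at least like $\kappa(\lfloor k/2\rfloor)$. Consequently $\Var(B_j) = p_n\sigma_\xi^2(1+o(1))$, so $s_n^2 = \sigma_\xi^2 n + O(n/\log n)$, while $\Var(C_j) = O(q_n)$ gives $t_n^2 = O(n q_n/(p_n+q_n))$. Balancing these against the Sakhanenko errors, one is led to choose $q_n \asymp m_n \asymp \log n$ and $p_n = (\log n)^{\gamma}$ for a suitable $\gamma > 1$; this simultaneously yields $t_n^2 = O(n/\log n)$ and a total error of $O(n^{1/(2+\delta)}\log^2 n) = O(n^{1/p}\log^2 n)$.

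The main obstacle is the delicate joint optimisation of the three parameters $p_n, q_n, m_n$: each enters multiple error budgets (the two KMT remainders, the variance-matching gap $|s_n^2 - \sigma_\xi^2 n|$, the short-block time $t_n^2$, and the weak-to-strict $m$-dependence residual), and the exact power of $\log n$ in the final rate is the slack after absorbing these competing sources into the Sakhanenko remainder. A secondary technical point is upgrading the $L^p$ closeness of $\xi^{(m_n)}$ to an almost-sure control on the partial-sum scale, requiring a maximal-inequality argument together with a union bound over a dyadic sequence of $n$'s.
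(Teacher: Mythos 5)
This statement is not proved in the paper at all: it is quoted verbatim as Theorem 2 of the cited split-invariance-principle reference, so the only comparison available is with the strategy of that external proof, which your sketch resembles in outline (blocking plus strong approximation of independent block sums, one Brownian motion per block type) but differs from in a way that matters.

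The genuine gap is that your entire construction is parametrised by $n$: the dependence-truncation level $m_n$, the block lengths $p_n,q_n$, the approximating sequence $\xi^{(m_n)}$, and hence the Sakhanenko/KMT coupling itself all change as $n$ changes. A KMT-type coupling applied "for each $n$" produces, for each $n$, a different pair of Brownian motions on a possibly different enrichment of the space; it does not produce the single pair $W_1,W_2$ and the single redefinition of $\xi$ for which the almost sure bound is asserted as $n\to\infty$. Your dyadic union bound fixes the almost-sure control of the weak-to-strict residual for each fixed truncation level, but it does not repair this: even along a dyadic sequence the target process $\xi^{(m_n)}$ and the coupled Brownian motions keep changing, so the statement "it suffices to prove the invariance principle for $\xi^{(n)}$" is not a reduction to a single invariance principle. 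The cited proof avoids this by letting the block lengths and the $m$-dependence level grow with the \emph{block index} $j$ rather than with $n$ (block $j$ uses the approximation $\xi^{(m_j)}$, with $\kappa(m_j)$ summable against the block sizes), so that one obtains one fixed sequence of independent long-block sums and one fixed sequence of short-block sums, applies the strong approximation for independent, non-identically distributed variables once to each, and reads off the deterministic time scales $s_n^2=\sigma_\xi^2 n+O(n/\log n)$ and $t_n^2=O(n/\log n)$ from the accumulated block variances. Reworking your argument in that index-$j$ form (equivalently, stitching your per-dyadic-range constructions into one coupling) is the missing idea; without it the proposal does not yield the theorem as stated. A secondary slip: you identify the exponent $p$ of the $L^p$ weak $m$-dependence with $2+\delta$ when balancing errors, whereas in the statement these are separate parameters and the rate $O(n^{1/p}\log^2 n)$ is governed by the approximation norm, not by the moment exponent alone.
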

As noted by \citet{SPLIT_SIP} the perturbed time sequences $\{s_n\}$ and $\{t_n\}$ are deterministic and can be explicitly calculated.

\subsubsection{Proof of Proposition \ref{SIP1}}
\begin{proof}

By Proposition \ref{prop1} we see that we can redefine our process such that it is embedded in a richer process $Z$. We will identify $X$ as the first coordinate of the process $Z.$ Following Proposition \ref{recurrent_atom}, we introduce the sequence of stopping times $(S_n,R_n)$ defined as $S_0=R_0:=0$ and 
$$S_{n+1}:=\inf \{T_m > R_n: Z_{T_m}\in A\} \ \ \textrm{and} \ \ R_{n+1}:=\inf \{T_m: T_m> S_{n+1}\}.$$
Then $Z_{R_n^+}$ is independent of $\mathcal{F}_{R_{n-1}}$ for all $n\geq 1$ and $(Z_{R_n})_{n\geq 1}$ is an i.i.d sequence with $$Z_{R_n}\sim \nu(dx)\lambda(du)K((x,u),dx')\ \ \textrm{for all}\ n\geq 1.$$
As a direct consequence, the sequence  $\{\xi_n\}_n$ defined as 
 $$ \xi_n := \int_{R_{n-1}}^{R_n}\{f(X_s)-\pi(f) \}\ ds,\ \ n \geq 1,$$
 is  stationary under $\mathbb{P}_\nu.$ 
Moreover, by Proposition \ref{prop3} for $n\geq 2$, $\xi_n$ is independent of $\mathcal{F}_{R_{n-2}}.$
Let $N(T)$  denote the amount of regenerations of the resolvent chain up to time $T$, namely 
 $$N(T)=\max\{k: R_k \leq T \}.$$
It immediately follows that 
$$\int_0^T\{f(X_s)-\pi(f) \} \ ds=\sum_{k=1}^{N(T)}\xi_k+ \int_{R_{N(T)}}^T\{f(X_s)-\pi(f) \} \ ds.$$
Consequently, we have that 
\begin{equation}
\label{remainder2}
\Bigg|\int_0^T\{f(X_s)-\pi(f) \} \ ds-\sum_{k=1}^{N(T)}\xi_k\Bigg|\leq \int_{R_{N(T)}}^T\abs{f(X_s)-\pi(f)}ds.
\end{equation}
By an analogous Borel-Cantelli argument as given in Theorem \ref{Multi_SIP} for the remainder term defined in (\ref{remainder1}),  we have that
\begin{equation}
\label{BorelCant21}
\int_{R_{N(T)}}^T\abs{f(X_s)-\pi(f)}ds=O(T^{1/p})\quad \textrm{a.s. for}\  T \rightarrow \infty.
\end{equation}
Furthermore, by Proposition \ref{prop3}, the sequence $\{\xi_k\}_{k=1}^\infty$ is a stationary $m$-dependent sequence. By the imposed moment conditions and stationarity, we have by the reasoning given in \citet[Section 3.1]{SPLIT_SIP} that $\{\xi_k\}_{k=1}^\infty$ is also a weakly $m$-dependent process with a rate function $\kappa(m)$ equal to zero for $m \geq 1.$ Hence by Theorem \ref{SPLIT_SIP}, we can redefine $(\xi_k)_k$ on a new probability space on which we can also construct two standard Brownian motions $W_1$ and $W_2$ such that
\begin{equation}
\label{S1}
\abs{\sum_{k=1}^n \xi_k-n\mathbb{E}_\nu \xi_1-W_1(s_n^2)-W_2(t_n^2)}=O(n^{1/p}\log^2n)\ \textrm{a.s.,}
\end{equation}
where $\{s_n\}$ and $\{t_n\}$ are increasing deterministic sequences with $s_n^2 = \sigma_\xi^2 n + O(n/ \log n )$ and $t_n^2 =O(n/ \log n )$. Note that by Proposition \ref{ergo_sig} we have that  \begin{equation*}
\pi(f)=\frac{1}{\varrho}\ \EE_\nu\int_0^{R_1}f(X_s)ds.
 \end{equation*}
Hence 
$$\EE_\nu \xi_1=\EE_\nu \int_0^{R_1} \{f(X_s)-\pi(f) \}\ ds =\varrho \cdot \pi(f-\pi(f))=0. $$
Furthermore, by definition of big $O$ in (\ref{S1}), there exists an almost surely finite random variable $C$ such that for almost all sample paths $\omega$ we have that $\textrm{for all } n\geq N_0\equiv N_0(\omega)$ we have that 
\begin{equation}
\label{split1}
\left.\frac{1}{n^{1/p}\log^2n }\abs{\sum_{k=1}^n \xi_k(\omega)-W_1(s_n^2,\omega)-W_2(t_n^2,\omega)}\right. < C(\omega)
\end{equation}
Since we have that $\mathbb{E}_\nu R_1^q< \infty$, by \citet[Theorem 2.4]{weighted_approx} we can construct a Brownian motion $\tilde{W}$ such that
\begin{equation}
    \abs{N(T)-\frac{T}{\mu}-\frac{\Var_\nu(R_1)}{\varrho^{3/2}}\tilde{W}_T}=o(T^{1/q}),
\end{equation}
\noindent
By the law of iterated logarithm for Brownian motion we obtain 
\begin{equation}
\label{standardarguments}
N(T)=\frac{T}{\varrho}+O(\sqrt{T \log \log T})\quad \textrm{a.s.   } 
\end{equation}
\noindent
 Since $N(T)$ is almost surely increasing and tends to infinity, we have that for almost every sample path $\omega$ there exists a  $T_0\equiv T_0(\omega)$ such that $N(T)(\omega)\geq N_0$ for all $T\geq T_0$. Hence we obtain from (\ref{split1}) that
 \begin{equation}
 \label{split2}
 \left. \limsup_{T\rightarrow \infty} \frac{\left\lvert \sum_{k=1}^{N(T)} \xi_k -W_1(s_{N(T)}^2)-W_2(t_{N(T)}^2)\right\lvert }{{N(T)}^{1/p}\log^2(N(T))}
\right. < C \quad \textrm{a.s. },
\end{equation}
where $s_{N(T)}^2$ and $t_{N(T)}^2$ are almost surely increasing sequences, which given $N(T)$ are deterministic with
\begin{align*}
    s_{N(T)}^2 &= \sigma_\xi^2 N(T) + O(N(T)/ \log N(T) )\\
    t_{N(T)}^2 &=O(N(T)/ \log N(T) ).
\end{align*}
We see that (\ref{split2}) can be reformulated as
\begin{align}
\label{stoch_invariance}
\left\lvert \sum_{k=1}^{N(T)} \xi_k -W_1(s^2_{N(T)})-W_2(t_{N(T)}^2)\right\lvert&= O({N(T)}^{1/p}\log^2N(T))) \quad \textrm{a.s. }\\
&= O({T}^{1/p}\log^2T)) \quad \textrm{a.s. }
\end{align}
Here the second equality follows by  (\ref{standardarguments}). Furthermore, the asymptotic behaviour of $N(T)$ motivates the introduction of $\sigma^2_T, \tau^2_T$ and $\beta_T$ defined as 
\begin{align*}
\sigma^2_T &=s^2_{N(T)}-\sigma^2_{\xi}\beta_T\\
\tau^2_T &= t^2_{N(T)}-\beta_T\\
\beta_T &=N(T)-T/\varrho
\end{align*}
By Theorem \ref{Brownie_fluctie}  (see also Theorem 1.2.1 of \citet{sip_boek}) we see that 
\begin{equation}
\label{browniandifference}
\big\lvert W_1(s^2_{N(T)})-W_1(\sigma^2_T) \big\lvert=O(T^{1/4}\log T) \ \textrm{and}\ \big\lvert W_2(t^2_{N(T)})-W_1(\tau^2_T) \big\lvert=O(T^{1/4}\log T),
\end{equation}
with 
$$\sigma^2_T = \frac{\sigma^2_{\xi}}{\varrho} T + O(T/ \log T)\ \textrm{and} \ \tau_T^2 = O(T/ \log T).$$
Combining results (\ref{BorelCant21}), (\ref{stoch_invariance}), and (\ref{browniandifference}) concludes the proof. By the same arguments given in the proof of Theorem \ref{Multi_SIP}, the strong invariance principle holds for every initial distribution.
\end{proof}
Furthermore, we have by \citet[Proposition 1]{SPLIT_SIP} that
$$\Corr(W_1(s_n),W_2(t_m))\rightarrow 0 \quad \textrm{as}\ m,n\rightarrow \infty.$$
Hence (\ref{cor_bm}) immediately follows.

\subsection{Theorem \ref{moments}}
For this proof, we will rely on the following properties of the resolvent chain. Granted that the process $X$ is aperiodic and positive Harris recurrent, then also the resolvent $\bar{X}$ will inherit these properties, as seen in \citet[Propostion 5.4.5]{the_gospel} and  \citet[Theorem 3.1]{tweedie_genres} respectively. Moreover, by  \citet[Theorem 5.3]{down_exponential}, exponential convergence to stationarity is equivalent for $X$ and $\bar{X}$. 
The split chain of the resolvent in turn  obtains aperiodicity and positive Harris recurrence from $\bar{X}$, as seen in for example \citet{nummelin_boek}. Following a co-de-initialising argument of \citet{roberts_rosenthal}, we see that the split chain inherits the rate of convergence of the resolvent chain. To conclude, we see that the split chain inherits aperiodicity, positive Harris recurrence, and the rate of ergodicity from the process $X$.

Note that by Proposition \ref{prop1} $(Z^1_{T_n},Z^2_{T_n})_n$, the jump chain of the first two coordinates of $Z$, has the same distribution as the split chain of the resolvent. From (\ref{min_resolvent}) and (\ref{KK}) we see that $(Z^1_{T_n},Z^2_{T_n})_n$ is a Markov chain taking values in $E':=E\times [0,1]$ that moves according to the kernel
\vspace{-0.02cm}
\begin{equation}
\label{splitkernel}
U'((x,u),(dx,du))=\nu(dx)\lambda(du)\mathbbm{1}_{\{u\leq\alpha \mathbbm{1}_C(x)\}}+W(x,dx)\lambda(du)\mathbbm{1}_{\{u>\alpha \mathbbm{1}_C(x)\}},
\end{equation}
where $\lambda$ denotes Lebesgue measure on the unit interval. Observe that the kernel of the split chain of the resolvent also satisfies a one-step minorisation condition $U'\geq s \otimes \nu \otimes \lambda,$ i.e.,
\begin{equation}
\label{split_min}
U'((x,u),(dx,du))\geq s(x,u)\nu(dx)\lambda(du),
\end{equation}
where $$s(x,u)=\mathbbm{1}_{\{u\leq\alpha \mathbbm{1}_C(x)\}}.$$
Moreover, the split chain of the resolvent is aperiodic, positive Harris recurrent and inherits the rate of convergence to stationarity from $X$.

\begin{lemma}[\protect{\citet[Lemma 1]{hobert}}]
\label{Upi} Let $(X_t)_{t\geq 0}$ be a positive Harris recurrent Markov process with invariant distribution $\pi$. Then for any  $\pi$-integrable function $g:E^{[0,\infty)} \rightarrow \mathbb{R}$ we have the following inequality holds
\begin{equation}
\EE_\pi \abs{g} \geq c \ \EE_\nu \abs{g}, 
\end{equation}
where $c=\alpha \pi(C) $.
\end{lemma}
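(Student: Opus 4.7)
The plan is to first establish the measure-theoretic domination $\pi \geq c \nu$ on $(E, \mathscr{E})$ with $c = \alpha \pi(C)$, and then transfer this domination to path-space expectations by conditioning on the initial state.

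The first step is to observe that $\pi$ is invariant not only for the semigroup $(P_t)_{t\geq 0}$ but also for the resolvent kernel $U$ defined in (\ref{resolvent_kernel}). By Fubini's theorem together with the invariance of $\pi$ under each $P_t$,
\begin{equation*}
\pi U(A) \;=\; \int_E \int_0^\infty e^{-t} P_t(x, A) \, dt \, \pi(dx) \;=\; \int_0^\infty e^{-t} \pi(A) \, dt \;=\; \pi(A).
\end{equation*}

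Next, I would integrate the one-step minorisation (\ref{min_resolvent}), $U(x, A) \geq \alpha \mathbbm{1}_C(x) \nu(A)$, against $\pi$ and use the invariance just shown:
\begin{equation*}
\pi(A) \;=\; \pi U(A) \;\geq\; \alpha \nu(A) \int_E \mathbbm{1}_C(x) \, \pi(dx) \;=\; \alpha \pi(C) \, \nu(A) \;=\; c\, \nu(A).
\end{equation*}
Hence $\pi - c\nu$ is a non-negative measure on $(E, \mathscr{E})$.

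Finally, I would lift this to expectations of the path functional $g$. For any initial law $\mu$, conditioning on $X_0$ and using the Markov property gives $\EE_\mu[g] = \int_E \EE_x[g] \, \mu(dx)$. Applying this identity to the non-negative integrand $x \mapsto \EE_x\abs{g}$ and invoking $\pi \geq c \nu$:
\begin{equation*}
\EE_\pi \abs{g} \;=\; \int_E \EE_x\abs{g} \, \pi(dx) \;\geq\; c \int_E \EE_x\abs{g} \, \nu(dx) \;=\; c \, \EE_\nu \abs{g}.
\end{equation*}

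The argument is short and presents no real obstacle; the one point to be careful about is that $\nu$ appears via minorisation of the resolvent rather than of $P_t$ for a fixed $t$, so it is the invariance of $\pi$ under $U$ (which follows from Fubini rather than any further hypothesis) that makes the final integration work.
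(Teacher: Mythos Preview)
Your proof is correct and is essentially the argument the paper invokes: the paper's own proof simply notes that $\pi=\pi U$ for the resolvent kernel and then defers to the identical computation in \citet[Lemma 1]{hobert}, which is precisely the integrate-the-minorisation-against-$\pi$ step followed by the lift to path space that you have written out in full.
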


\begin{proof}
Since the resolvent chain has the same stationary distribution as the process $X$, i.e., $\pi=\pi U$, the claim follows with the identical argument of \citet[Lemma 1]{hobert}. 
\end{proof}

\subsubsection{Proof of Theorem \ref{moments}}

\begin{proof}
Firstly, by the construction of the randomised stopping times $(S_n)_n$ and $(R_n)_n$ we see that $R_n=S_n+\sigma_{n+1}$, where $\sigma_{n+1}$ has a standard exponential distribution. Hence, by the triangle inequality in $L^q(\pi)$ we only need to show that  $\EE_\pi[{S_1}^q] < \infty$, with $$S_1=\inf \{T_n: Z_{T_n}\in C\times [0,\alpha]\times E\}$$
Let $\bar{Z}=(\bar{Z}_n)_n$ denote the jump chain of the process $Z$, i.e., $\bar{Z}_n=Z_{T_n}$, where the  $(T_n)_n$ denote the jump times. Let  $\bar{X}=(\bar{X}_n)_{n \geq 0}$ again denote the resolvent chain. Let $N_t$ denote the amount of jumps up to time $t$.
Let $\bar{\tau}_A$ denote the hitting time of the recurrent atom for jump chain $\bar{Z}$, i.e.,
\begin{align*}
\bar{\tau}_A:&=\inf\{n\geq 0: \bar{Z}_n \in  A \}=\inf\{n\geq 0: \bar{Z}_n \in  C\times [0,\alpha]\times E \}.
\end{align*} 
For notational convenience we introduce $q:=\beta-1$, note that by the assumed ergodicity assumptions we have that $q>(p+\varepsilon)/\varepsilon.$
 From the relation between the expectation of positive random variables and  tail probabilities we can express the expectation of interest as follows 
\begin{align*}
    \EE_\pi S_1^q &=  \int_0^\infty t^q \mathbb{P}_\pi\left( S_1 > t\right) dt\\
      &=  \int_0^\infty t^q \sum_{m=0}^\infty \mathbb{P}_\pi\left( \bar{\tau}_A > m; N_t=m \right) dt\\
       &= \int_0^\infty t^q \sum_{m=0}^\infty  \underbrace{\mathbb{P}_\pi\left( \bar{\tau}_A > m ; N_t=m ;\bar{Z}_0 \in C\right)}_{=0} + \ \mathbb{P}_\pi\left( \bar{\tau}_A > m ; N_t=m ;\bar{Z}_0 \notin C\right) dt\\
      &= \int_0^\infty t^q \int_{E'}\int_{E'} \sum_{m=0}^\infty \frac{ t^{m-1}}{(m-1)!} e^{- t} \sum_{k=m+1}^\infty \left(U'-\nu \otimes \lambda \otimes s \right)^k(x,dz)\mathbbm{1}_A(z) \pi(dx)dt\\
      &= \int_{E'} \int_{E'}  \sum_{m=0}^\infty \int_0^\infty \frac{ t^{m+q-1}}{(m-1)!} e^{- t}\sum_{k=m+1}^\infty \left(U'-\nu \otimes \lambda \otimes s \right)^k(x,dz)\mathbbm{1}_A(z) dt \pi(dx)\\
            &= \int_{E'} \int_{E'} \sum_{k=1}^\infty \left(U'-\nu \otimes \lambda \otimes s \right)^k(x,dz)\mathbbm{1}_A(z) \sum_{m=0}^{k-1}  \frac{\Gamma(m+q)}{\Gamma(m)}  \pi(dx)\\
             &= \int_{E'} \int_{E'}  \sum_{k=1}^\infty \frac{\Gamma(k+q)}{(q+1)\Gamma(k-1)} \left(U'-\nu \otimes \lambda \otimes s \right)^k(x,dz)s(z)    \pi(dx).
\end{align*}
Note that ${\Gamma(k+q)}/{\Gamma(k-1)}$ can be dominated by some polynomial $\psi(k)$ with a leading term of order $k^{q+1}$.
By \citet[Proposition 1.6]{nummelin_poly} we have that 
\[\int_{E'} \int_{E'}  \sum_{k=0}^\infty \psi(k)  \left(U'-\nu \otimes \lambda \otimes s \right)^k(x,dz)s(z)\pi(dx)< \infty .\]
It follows that $\mathbb{E}_\pi S_1^q < \infty.$

For the second statement of Theorem \ref{moments} we follow the argument of
 \citet[Theorem 2]{remarks_fixed} with some minor adaptations. We give the proof for completion.
\begin{align*}
\left[\mathbb{E}_{\pi}\xi_1^p\right]^{1/p}&\leq\left[\mathbb{E}_{\pi}{\left(\int_0^{R_1}\abs{f(X_s)}ds\right)}^p\right]^{1/p}\\ &=\left[\mathbb{E}_{\pi}\left(\int_0^\infty \abs{f(X_s)}  \mathbbm{1}_{\{ R_1\geq s\}}ds\right)^p\right]^{1/p}\\
&\leq \int_0^\infty\left[\mathbb{E}_{\pi}\left( \abs{f(X_s)}^p  \mathbbm{1}_{\{ R_1\geq s\}}ds\right)\right]^{1/p}\\
&\leq \int_0^\infty\left[\mathbb{E}_{\pi} \abs{f(X_s)}^{p+\varepsilon}\right]^{p/(p+\varepsilon)} \left[\mathbb{E}_{\pi} \mathbbm{1}_{\{ R_1\geq s\}}\right]^{\varepsilon/(p+\varepsilon)}ds\\
&\leq \pi\left(\abs{f}^{p+\varepsilon}\right)^{p/(p+\varepsilon)} \int_0^\infty\left[\mathbb{P}_{\pi} ( R_1\geq s)\right]^{\varepsilon/(p+\varepsilon)}ds\\
&\leq \pi\left(\abs{f}^{p+\varepsilon}\right)^{p/(p+\varepsilon)} (1+\pi(R_1^q)) \int_1^\infty s^{-\varepsilon q/(p+\varepsilon)}ds< \infty.
\end{align*}

Here the inequalities follow by Minkowski's integral inequality, H\"{o}lder's inequality, stationarity, and Markov's inequality. Note that the integral on the last line is finite due to the imposed condition on the rate of polynomial ergodicity since $q=\beta-1> (p+\varepsilon)/\varepsilon$. An application of Lemma \ref{Upi} concludes the proof.
\end{proof}
\begin{remark} For the exponentially erogdic case we would make use of 
\citet[Lemma 2.8]{nummelin1982} which states that for an exponentially ergodic Markov chain there exists an $r>1$ such that
$$\int_{E'} \int_{E'}  \sum_{k=0}^\infty r^k  \left(U'-\nu \otimes \lambda \otimes s \right)^k(x,dy)\mathbbm{1}_C(y)\pi(dx)< \infty .$$
\end{remark}

\subsection{Theorem \ref{zz_sip} and \ref{multi_zz_sip}}

\begin{lemma} [\protect{\citet[Lemma 2.4]{merlevede2015}}] Let $B$ be a standard Brownian motion and $N$ be a Poisson process with intensity $\lambda$, independent of $B.$ Then there exists a standard Brownian motion $W$ that is also independent of $N$ such that 
\label{mer_lemma}
$$\abs{B(n)-\frac{1}{\sqrt{\lambda}} W(N(n))}=O(\log(n))$$
\end{lemma}
\begin{proof}
The claim immediately follows from \citet[Lemma 2.4]{merlevede2015} and a Borel-Cantelli argument.
\end{proof}

\subsubsection{Proof of Theorem \ref{zz_sip}}
\begin{proof}

Let $x_0$ denote the smallest local optimum of the density $\pi$, i.e.,
$$x_0=\min \{x: \pi'(x)=0\}.$$
Since the tails of $\pi$ are diminishing, we must have that $x_0$ is a local maximum.  Moreover, define a set $A$ as follows 
$$A=[-M, x_0] \times \{+1\}.$$
\noindent
Note that for all $(x,v)\in A$ we have that the switching intensity $\lambda(x,v)=0$, since the process is moving toward a higher density region.
Hence the process will move deterministically for time $M$ until the point $x_0\times \{+1\}$ is reached and the probability of a velocity change becomes positive. This motivates the introduction to the stopping times $R_n$ defined as

$$R_0=\inf \{ t \geq 0: (X_t,V_t)= (x_0,1)\}, \ \textrm{and}\ R_n=\inf \{ t \geq R_{n-1}: (X_t,V_t)= (x_0,1)\}.$$

\noindent
By the Markov property, the sequence  $\{\xi_n\}$ defined as 
 $$ \xi_n := \int_{R_{n-1}}^{R_n}\{f(X_s)-\pi(f) \}\ ds,\ \ n \geq 1,$$
 is  i.i.d under $\mathbb{P}_\nu,$ with $\nu$ a Dirac measure at the point $x_0 \times \{+1\}.$ Note that this argument holds for any local optimum by the smoothness assumptions on $\pi$. Note that we also have that $R_n=M+\tau_A$ with $\tau_A$ again denoting the hitting time of set $A.$ Since we have that 
$$\{\tau_A>t\} \subset \bigcup_{m=1}^\infty \{\bar{\tau}_A>m; N_t=m\}, $$
\noindent
where $\bar{\tau}$ again denotes the hitting time of the resolvent chain, we can follow the argument of Theorem \ref{moments} to obtain that
 $$\EE_\nu[R_1^{\beta-1}] < \infty.$$ Moreover, for all measurable $f: E\rightarrow \mathbb{R}$ with $\pi(\abs{f}^{p+\varepsilon})< \infty$ where $p \geq 1$ and $\beta> (p+2\varepsilon)/\varepsilon$, we have that
    $$\EE_\nu\left[\left(\int_0^{R_1}f(X_s)-\pi(f)ds\right)^{p}  \ \right]  < \infty.$$
\noindent
\noindent
Define $(\tau_k)_{k \in \mathbb{N}}$ as $\tau_k=R_k-R_{k-1}$ and let $\varrho$ and $\sigma_\varrho^2$ denote the mean and variance respectively. The sequence of random vectors $(\xi_k,\tau_k)$ are independent and identically distributed. If we choose $\alpha= \Cov_\nu(\xi_1,\tau_1)/\Var_\nu(\tau_1)$, then it immediately follows that $\xi_k-\alpha (\tau_k- \varrho)$ and $\tau_k$ are uncorrelated. 

Applying the multivariate Koml\'os-Major-Tusn\'ady approximation given in \citet[Theorem 1]{kmt_multi} and \citet[Theorem 2.1]{weighted_approx}, there exists two independent Brownian motions $B_1$  and $B_2$ such that 
\begin{equation}
\abs{\sum_{k=1}^n\xi_k-\alpha(\sum_{k=1}^n \tau_k- \varrho)- \tilde{\sigma}B_1}=o\left({\psi_n} \right)      
\label{s1}
\end{equation}

\begin{equation}
\abs{R_n-n\varrho-\sigma_\tau B_2(n)}=o\left(\psi_n \right),
\label{s2}
\end{equation}
with 
\begin{equation}
    \label{rr}
    \psi_n=n^{ \min \left(\frac{1}{\beta-1}   ,   \frac{1}{p}\right)}
\end{equation}
\noindent
Note that in (\ref{s1}) we have that $\EE_\nu \xi_1=0$ by Theorem \ref{ergo_sig} and that $\tilde{\sigma}=\Var_\nu(\xi_1-\alpha(\tau_1-\varrho))$. From the assumptions on the rate of ergodicity we see that the approximation error simplifies to $o(n^{1/p}).$ By \citet[Theorem 1(ii)]{kmt_1}, a Poisson Process $N$ with intensity $\lambda=\varrho^2/\sigma^2_\varrho$ can be constructed from the Brownian motion $B_2$ such that
\begin{align}
    \abs{N(n)-\frac{\varrho}{\gamma}n-\frac{\sigma_\rho}{\gamma}B_2(n)}=O(\log n), \label{PP_sip}
\end{align}
\noindent
wehre $\gamma=\sigma^2_\varrho/\varrho$ and $N$ is constructed increment-wise from $B_2$ in a determinstic way and is therefore also independent of $B_1$. From (\ref{s2}) and (\ref{PP_sip}) it follows that \begin{align}
\abs{R_n-\gamma N(n)}=o(n^{1/p}).
\label{claim1}
\end{align} We claim that it therefore follows that
\begin{align}
   \abs{\int_0^{R_n} \hspace{-0.15cm} f(X_s)-\pi(f)ds - \int_0^{\gamma N(n)} \hspace{-0.25cm} f(X_s)-\pi(f) ds}= o(n^{1/p}) \label{claim2}
\end{align}
Indeed, we have that
\begin{align}
    \abs{\int_0^{R_n} \hspace{-0.15cm} f(X_s)-\pi(f)ds - \int_0^{\gamma N(n)} \hspace{-0.25cm} f(X_s)-\pi(f) ds}&= \abs{\int_{R_n}^{\gamma N(n)} \hspace{-0.25cm} f(X_s)-\pi(f)ds}
\end{align}
\noindent
Since  $\abs{R_n-\gamma N(n)}=o(n^{1/p})$, it follows from the ergodic law of large numbers that
\begin{equation}
 \abs{\frac{1}{n^{\frac{1}{p}}}\int_{R_n}^{\gamma N(n)} \hspace{-0.25cm} f(X_s)-\pi(f)ds} \xrightarrow[]{a.s.}0,
\label{ergo_lln}
\end{equation}
\noindent
and therefore our claim (\ref{claim2}) follows. Combining (\ref{s1}), (\ref{claim1}), and (\ref{claim2}) it follows that
\begin{equation}
\abs{\int_0^{\gamma N(n)} \hspace{-0.25cm} f(X_s)-\pi(f) ds-\alpha \gamma N(n)+ \alpha \varrho n - \tilde{\sigma}B_1(n)}=o\left(n^{\frac{1}{p}} \right)    \label{s3}
\end{equation}
\noindent
Let $(\Gamma_s)_{s \geq 0}$ be defined as
$\Gamma_0:=0$ and $\Gamma_s:=N^{-1}(s)$, the generalized inverse of the Poisson process. Taking $n= \Gamma_{n'}$ in (\ref{s3}) and subsequently making the substitution $n=\gamma n'$, it follows that 
\begin{equation}
\abs{\int_0^{n} f(X_s)-\pi(f) ds-\alpha  n+ {\alpha \varrho } \Gamma_{n/ \gamma} - {\tilde{\sigma}}B_1(\Gamma_{n/ \gamma})}=o\left({\Gamma_n}^{{1} / {p}}\right)=o\left(n^{{1} /{p}} \right),     \label{s4}
\end{equation}
\noindent
where we used the fact that $\Gamma$ is a non-decreasing process that tends to infinity. Moreover, since $\Gamma_n$ has a Gamma distribution it follows from the Koml\'os-Major-Tusn\'ady approximation \citep[Theorem 1] {kmt_1} that there exists a Brownian motion $B_3$ such that
\begin{equation}
    \abs{\Gamma_n-\frac{n}{\lambda}-\frac{1}{\lambda} B_3(n)}=O(\log n).
    \label{s5}
\end{equation}
\noindent
Note that the Poisson process $N$ and therefore its corresponding event time process $\Gamma$ are independent of $B_1.$ Therefore by an application of Lemma \ref{mer_lemma} with $n=\Gamma_n$ it follows that there exists a standard Brownian motion $B_4$ independent of $N$ and $\Gamma$ such that 
\begin{equation}
\abs{B_1(\Gamma_n)-\frac{1}{\sqrt{\lambda}} B_4(n)}=O(\log n)
\label{s6}
\end{equation}
\noindent
Applying the obtained approximations given in (\ref{s5}) and (\ref{s6}) to (\ref{s4}) it follows that
\begin{equation}
\abs{\int_0^{n} f(X_s)-\pi(f) ds-\left(\frac{\tilde{\sigma}}{\sqrt{\lambda \gamma}}B_4(t)- \frac{\alpha \varrho}{\lambda \sqrt{\gamma}}B_3(t)\right)} = o\left(n^{{1} /{p}} \right).
\end{equation}
\noindent
Note that since $B_3$ and $B_4$ are independent we have that 
\begin{equation}
W_t=\frac{1}{\sigma_f}\left(\frac{\tilde{\sigma}}{\sqrt{\lambda \gamma}}B_4(t)- \frac{\alpha \varrho}{\lambda \sqrt{\gamma}}B_3(t)\right)
\end{equation}
\noindent
is a standard Brownian motion since
\begin{equation}
\frac{\tilde{\sigma}^2}{\gamma \lambda}+  \frac{\alpha^2 \varrho^2}{\gamma \lambda^2}=\frac{\mathbb{E}_\nu \xi_1^2}{\varrho}=\sigma^2_f.
\end{equation}
\noindent
 Furthermore, by definition of big $O$, there exists an almost surely finite random variable $C$ such that for almost all sample paths $\omega$ we have that $\textrm{for all } n\geq N_0\equiv N_0(\omega)$ we have that 
\begin{equation}
\label{split_zagC}
\left.\frac{1}{n^{1/p} }\abs{\int_0^n f(X_s(\omega))ds-T\pi(f)-\sigma^2_f W_n(\omega)}\right. < C(\omega).
\end{equation}
It immediately follows that (\ref{split_zagC}) also holds for $T$ sufficiently large and hence carries over for $T \rightarrow \infty$.  By the same argument given in the proof of Theorem \ref{Multi_SIP}, the strong invariance principle holds for every initial distribution.
\end{proof}
\subsubsection{Proof of Theorem \ref{multi_zz_sip}}
\begin{proof}
From \citet[Proposition 2.8]{zigzagsub} we see that the Zig-Zag process with a stationary distribution of product form $\pi(x)=\prod_{i=1}^d\pi_i(x_i)$ can be decomposed into $d$ independent Zig-Zag processes, each with stationary distribution $\pi_i$. Since we have that
\begin{equation}
\label{ms1}
   \norm{ \int_0^Tf(X_t) \ dt-T\pi(f)-\Sigma_f^{1/2} W(T)} \leq \sqrt{d} \max_{i} \abs{\int_0^Tf_i(X^i_t) \ dt-T\pi_i(f_i)-\sigma_{f_i}W^i(T)},
\end{equation}
the theorem follows if we can show that a strong invariance principle holds for every component on the same probability space.  Firstly, assume that the initial distribution of $Z$ is $\pi$. 

In order to obtain a Brownian approximation for every coordinate we will use a regenerative argument along the lines of Theorem \ref{zz_sip}.  For every component $i=1,\cdots,d$ we define the following:  $x_0^i$ the smallest local maximum of the density $\pi_i$, i.e.,
$x^i_0=\min \{x: \pi_i'(x)=0\}$ and corresponding set
set
$A_i=[-M, x^i_0] \times \{+1\},$
\noindent
and the sequences of stopping times $\{R^i_n\}_{n\in \mathbb{N}}$ as follows
$$R^i_0=\inf \{ t \geq 0: (X^i_t,V^i_t)= (x^i_0,1)\}, \ \textrm{and}\ R^i_n=\inf \{ t \geq R_{n-1}: (X^i_t,V^i_t)= (x^i_0,1)\}.$$
\noindent
 Furthermore, we also introduce for every coordinate $i$ the sequence  $\{\xi^i_n\}$ defined as 
 $$ \xi^i_n := \int_{R^i_{n-1}}^{R^i_n}\{f(X_s)-\pi(f) \}\ ds,\ \ n \geq 1.$$
 Note that for all components $ \{\xi^i_n\}_n$ is i.i.d under $\mathbb{P}_{\nu_i},$ with $\nu_i$ a Dirac measure at the point $x^i_0 \times \{+1\}.$ 
\noindent
We can follow the argument of Theorem \ref{moments} to obtain that
 $$\EE_{\nu_i}\left[{R^i_1}^{\beta-1}\right] < \infty \quad \textrm{for} \ i=1,\cdots,d.$$ Moreover, for all measurable $f: E\rightarrow \mathbb{R}$ with $\pi(\abs{f}^{p+\varepsilon})< \infty$ where $p \geq 1$ and $\beta > 2+ p/\varepsilon$, we have that 
    $$\EE_{\nu_i}\left[\left(\int_0^{R^i_1}f_i(X^i_s)-\pi(f)ds\right)^{p}  \ \right]  < \infty \quad \textrm{for} \ i=1,\cdots,d.$$

\noindent
\noindent
Note that for the RHS of (\ref{ms1}), we have for every coordinate $i$ that 
\begin{align}
\abs{\int_0^T  f_i(X^i_t) \ dt-T\pi_i(f_i)-\sigma_{f_i}W^i(T)} &\leq \abs{\int_0^{R_1^i}  f_i(X^i_t)- \pi_i(f_i)dt} \\
& + \abs{\int_{R_1^i}^{T}f_i(X^i_t)- \pi_i(f_i)dt \label{ms2} -\sigma_{f_i}W^i(T)}.
\end{align}
By assuming that the process starts at its stationary distribution, it follows by the argument in the proof of Theorem \ref{moments} that $\abs{\int_0^{R_1^i}f_i(X^i_t)- \pi_i(f_i)dt}$ is almost surely finite and hence asymptotically negligible.

Define $(\tau^i_k)_{k \in \mathbb{N}}$ as $\tau_k=R^i_k-R^i_{k-1}$ and let $\varrho_i$ and $\sigma_{\varrho_i}^2$ denote the mean and variance respectively. The sequence of random vectors $(\xi^i_k,\tau^i_k)$ are independent and identically distributed. If we choose $\alpha_i= \Cov_\nu(\xi^i_1,\tau^i_1)/\Var_\nu(\tau^i_1)$, then it immediately follows that $\xi^i_k-\alpha_i (\tau^i_k- \varrho_i)$ and $\tau^i_k$ are uncorrelated. By applying the multivariate Koml\'os-Major-Tusn\'ady approximation given in \citet[Theorem 1]{kmt_multi} and \citet[Theorem 2.1]{weighted_approx} to the sequence of random vectors
$$z_k=(z_k^1,\cdots,z_k^d)^T=((\xi_k^1-\alpha_1(\tau_k^1-\varrho_1),\tau_k^1), \cdots, (\xi_k^d-\alpha_d(\tau_k^d-\varrho_d),\tau_k^d))^T,$$
it follwos that there exists a $2d$-dimensional Brownian motion such that
\begin{align}
    \abs{\sum_{k=1}^n z_k -\mathbb{E}_\nu z_1 -\tilde{\Sigma}_z B_n} = o\left(n^{{1} /{p}} \right),
\end{align}
where $\tilde{\Sigma}_z=\diag(\Var_\nu(z_1),\cdots, \Var_\nu(z_k)).$ All components of $z_k$ are independent and therefore also the corresponding components of the Brownian motion are independent. Note that we have that for every component $z_k^i$ of $z_k$ we have that there exists two independent Brownian motions $B_1$  and $B_2$ such that 
\begin{equation}
\abs{\sum_{k=1}^n\xi^i_k-\alpha_i\left(\sum_{k=1}^n \tau^i_k- \varrho_i\right)- \tilde{\sigma}_iB_{i1}}= o\left(n^{{1} /{p}} \right)      
\label{ss1}
\end{equation}
\begin{equation}
\abs{R^i_n-n\varrho_i-\sigma_{\tau_i }B_{i2}(n)}=o\left(n^{{1} /{p}} \right)
\label{ss2}
\end{equation}
\noindent
Note that in (\ref{ss1}) we have that $\EE_\nu \xi^i_1=0$ by Theorem \ref{ergo_sig} and that $\tilde{\sigma}_i=\Var_\nu(\xi^i_1-\alpha_i(\tau^i_1-\varrho_i))$. 
By following the argument of the proof of Theorem \ref{zz_sip} for every component, we see that
\begin{equation}
\label{fin}
\abs{\int_{R_1^i}^n f_i(X_t^i)-\pi_i(f_i)ds- \sigma_{f_i}W^i_n} = o\left(n^{{1} /{p}} \right) \quad \textrm{for} \ i=1,\cdots,d..
\end{equation}
By combining (\ref{ms1}), (\ref{ms2}) and (\ref{fin}) the claim follows. By the argument given in the proof of Theorem \ref{Multi_SIP}, the strong invariance principle holds for every initial distribution.
\end{proof}

\subsection{Proof of Theorem \ref{flucto}}
\begin{proof}
Firstly, by Theorem \ref{SIP2} 
there exist two standard Brownian motions $W_1$ and $W_2$ such that 
$$\abs{\int_0^Tf(X_s)ds-W_1(\sigma^2_T)-W_2(\tau^2_T)}=O(\psi_T)\ \textrm{a.s.,}$$
where $\{\sigma^2_T\}$ and $\{\tau^2_T\}$ are non-decreasing sequences with $$\sigma^2_T = \frac{\sigma^2_{\xi}}{\varrho} T + O(T/ \log T)\ \textrm{and} \ \tau_T^2 = O(T/ \log T).$$
  as $T\rightarrow \infty$, where
$   \sigma^2_\xi \textrm{ and}\  \varrho $ are  defined in Theorem \ref{SIP2}. An application of our strong invariance principle gives the following
\begin{align*}
   &\limsup_{T\rightarrow \infty}\max_{0\leq t\leq T-a_T}\max_{0\leq u \leq a_T} \beta_T\abs{\int_{0}^{t+u}f(X_u)du- \int_{0}^{t}f(X_u)du} \\  &\leq \limsup_{T\rightarrow \infty}\max_{0\leq t\leq T-a_T}\max_{0\leq u \leq a_T} \beta_T\abs{W_1(\sigma^2_{t+u})-W_1(\sigma^2_{t})}    \\
  & +
   \limsup_{T\rightarrow \infty}\max_{0\leq t\leq T-a_T}\max_{0\leq u \leq a_T} \beta_T\abs{W_2(\tau^2_{t+u})-W_2(\tau^2_{t})}\\
   & + \beta_T O(\psi_T)   \\
      & =: A_1+A_2+A_3.  
\end{align*}
  Since $\beta_T\psi_T=o(1)$, it immediately follows that $\limsup_T A_3=0$ almost surely.
  In order to use the arguments of \citet[Theorem 4]{SPLIT_SIP} for the terms $A_1$ ad $A_2$, we require the following properties of the sequence $\sigma^2_T$; for any $\varepsilon>0$ there exists some $T_0$ such that for all $T\geq T_0$ 
\begin{equation}
\label{time_prop}
 \sigma^2_T\leq \left(\frac{\sigma^2_\xi}{\varrho}+\varepsilon\right)T
 \quad \textrm{and} \quad 
 \sup_{u\geq 0}\{\sigma^2_{u+a_T}-\sigma^2_u\} \leq \left(\frac{\sigma^2_\xi}{\varrho}+\varepsilon\right)a_T.   
\end{equation}
From the proof of Proposition \ref{SIP1} we see that the process $\{\xi_n\}_{n \in \mathbb{N}}$ with $\xi_n:=\int_{R_{k-1}}^{R_k}f(X_s) \ ds$, where $\{R_k\}_{k }$ denotes the regeneration times of the resolvent, satisfies the following strong approximation result 
$$\abs{\sum_{k=1}^n \xi_k-W_1(s_n^2)-W_2(t_n^2)}=O(n^{1/p}\log^2n)\ \textrm{a.s.,}$$
where $\{s_n\}$ and $\{t_n\}$ are increasing deterministic sequences with $s_n^2=\sigma_\xi^2n+O(n/\log n)$ and $t_n^2\sim \sigma_\xi^2n/\log n$.
By the properties of the sequences $s^2_n$ and $t^2_n$ described in Theorem \ref{SPLIT_SIP} we have that  for any $\varepsilon>0$ there exists some $n_0$ such that  for all $n \geq n_0$ 
$$s^2_{n} \leq(\sigma^2_\xi+\varepsilon)n \ \quad  \textrm{and}\  \quad \sup_k \{s^2_{k+{a_n}}- s^2_k\} \leq(\sigma^2_\xi+\varepsilon)a_n $$
Since $\sigma^2_T=\frac{\sigma^2_\xi}{\mu}T+O(T/\log(T))$, the first required property described in (\ref{time_prop}) can be easily seen to hold. Since  $\limsup_{k\rightarrow \infty }( s^2_{k+1}-s^2_{k})=\sigma^2_{\xi}$ and $ N(u)$ tends to infinity almost surely as $u\rightarrow \infty$, it immediately follows that $$\limsup_{u\rightarrow \infty }( s^2_{N(u)+1}-s^2_{N(u)})=\sigma^2_{\xi}.$$
Hence for $u$ sufficiently large we have that $$ s^2_{N(u+a_T)}-s^2_{N(u)}= \hspace{-0.5cm} \sum_{j=1}^{N(u+a_T)-N(u)}\hspace{-0.6cm}s^2_{N(u)+j}-s^2_{N(u)+j-1}\leq (\sigma^2_\xi+\varepsilon)(N(u+a_T)-N(u))$$
 Since $\sigma^2_T$ was defined as $
      \sigma^2_T:=s^2_{N(T)}-\sigma^2_{\xi}\beta_T
      $ with $\beta_T:=N(T)-T/\varrho$ (see proof of Proposition \ref{SIP1}),  we have that for all $\varepsilon>0$ and $u$ sufficiently large
  \begin{align*}
    \sigma^2_{u+a_T}-\sigma^2_{u}&=s^2_{N(u+a_T)}-s^2_{N(u)}-
    \sigma^2_\xi(\beta_{u+a_T}-\beta_{u})\\[-5pt]
    &\leq (\sigma^2_\xi+\varepsilon)(N(u+a_T)-N(u))-\sigma^2_\xi (N(u+a_T)-N(u) )+\frac{\sigma^2_\xi}{\varrho}a_T\\[-7.5pt]
    &= \varepsilon(N(u+a_T)-N(u))+\frac{\sigma^2_\xi}{\varrho}a_T. 
  \end{align*}
Since $a_T\rightarrow \infty$ as $T\rightarrow \infty$ by assumption, and we have that the amount of regenerations of the resolvent chain in an interval of length $a_T$ will be equal to $a_T/\varrho+o(a_T)$, it follows that for all $\varepsilon>0$ there exists some $T_0$ such that for all $T\geq T_0$
\begin{align*}
    \sup_{u\geq 0}\{\sigma^2_{u+a_T}-\sigma^2_{u}\}\leq \left(\frac{\sigma^2_\xi}{\varrho}+2\varepsilon \right)a_T.
\end{align*}
Consequently, we have also shown that the required properties given in (\ref{time_prop}) hold. Hence for $T\geq T_0$ we obtain
\begin{align*}
\max_{0\leq t\leq T-a_T}\max_{0\leq u \leq a_T} \beta_T\abs{W_1(\sigma^2_{t+u})-W_1(\sigma^2_{t})}&\leq \sup_{0\leq t\leq \sigma^2_{T-a_T}}\sup_{0\leq u \leq (\sigma^2_\xi/ \varrho+\varepsilon)a_T} \beta_T\abs{W_1(t+u)-W_1(t)}\\
 &\leq \hspace{-8pt} \sup_{0\leq t\leq (\sigma^2_\xi/ \varrho+\varepsilon)(T-a_T)} \hspace{-2pt} \sup_{\ 0\leq u \leq (\sigma^2_\xi/ \varrho+\varepsilon)a_T} \hspace{-16pt} \beta_T\abs{W_1(t+u)-W_1(t)}\\
&= \sup_{0\leq t\leq \tilde{T}_\varepsilon-\tilde{a}_{T,\varepsilon}}\sup_{\ 0\leq u \leq \tilde{a}_{T,\varepsilon}} \beta_T\abs{W_1(t+u)-W_1(t)},
\end{align*}
where $\tilde{T}_\varepsilon$ and $\tilde{a}_{T,\varepsilon}$ are defined as $(\sigma^2_\xi/ \varrho+\varepsilon)T$ and $(\sigma^2_\xi/ \varrho+\varepsilon)a_T$ respectively. Introduce 
$$\tilde{\beta}_{T,\varepsilon}:=\left(2\tilde{a}_{T,\varepsilon}\left[\log \frac{\tilde{T}_\varepsilon}{\tilde{a}_{T,\varepsilon}}+\log\log \tilde{T}_\varepsilon \right]\right)^{-1/2},$$
then by Theorem \ref{Brownie_fluctie} we have that 
\begin{equation*}
  \limsup_{T\rightarrow \infty}\sup_{0\leq t\leq \tilde{T}_\varepsilon-a_{T,\varepsilon}}\sup_{0\leq u \leq a_{T,\varepsilon}}\tilde{\beta}_{T,\varepsilon}\abs{W(t+u)-W_t}={\sigma^2_\xi}/{ \varrho} \quad \textrm{a.s.}
\end{equation*}
Similarly, it can be shown that $\limsup A_2=0$ almost surely, which completes the proof. \\
\end{proof}
\section*{Acknowledgements}
This work is part of the research programme ‘Zigzagging through computational barriers’ with project number 016.Vidi.189.043, which is financed by the Dutch Research Council (NWO).

\bibliography{References}

\end{document}